\newcommand{\margin}[1]{%
   \marginpar{\small\sffamily\raggedright #1}}
\newcommand{\B}[1]{\ensuremath{\mathsf{#1}}} 
\newcommand{\C}[1]{\ensuremath{\mathcal{#1}}}
\newcommand{\ch}{\operatorname{char}}
\newcommand{\Gal}{\operatorname{Gal}}
\newcommand{\iso}{\cong}
\newcommand{\notdivides}{\!\not\hspace*{2.6pt}\bigm|}
\newcommand{\thmref}[1]{Theorem~\ref{#1}}
\newcommand{\lemref}[1]{Lemma~\ref{#1}}
\newcommand{\propref}[1]{Proposition~\ref{#1}}
\newcommand{\secref}[1]{Section~\ref{#1}}
\newcommand{\tabref}[1]{Table~\ref{#1}}
\newtheorem{thm}{Theorem}[section] 
\newtheorem{prop}[thm]{Proposition}
\newtheorem{lem}[thm]{Lemma}
\theoremstyle{remark}
\newtheorem{rem}[thm]{Remark}
\numberwithin{equation}{section}  
\begin{document}

\title{Semisimple Group (and Loop) Algebras over Finite Fields$^*$}
\author{Raul A. Ferraz \and Edgar G. Goodaire \and C\'esar Polcino Milies}
\address{Instituto de Matem\'atica e Estat\'{\i}stica \\
Universidade de S\~ao Paulo, Caixa Postal 66.281 \\
CEP 05314-970, S\~ao Paulo SP \\
Brasil}
\email{raul@ime.usp.br}
\address{Memorial University of Newfoundland\\
St. John's, Newfoundland\\
Canada A1C 5S7}
\email{edgar@mun.ca}
\thanks{The second author wishes to thank FAPESP
of Brasil and the Instituto de Matem\'atica e Estat\'{\i}stica of
the Universidade de S\~ao Paulo for their support and hospitality.}
\address{Instituto de Matem\'atica e Estat\'{\i}stica \\
Universidade de S\~ao Paulo, Caixa Postal 66.281 \\
CEP 05314-970, S\~ao Paulo SP \\
Brasil}
\email{polcino@ime.usp.br}
\thanks{This research was supported by a Discovery Grant
from the Natural Sciences and Engineering Research
Council of Canada, by FAPESP,
Proc. 2004/15319-3 and 2008/57553-3, and by CNPq., Proc. 300243/79-0(RN)
of Brasil.
\thanks{$^*$This is a considerably more detailed version of a paper entitled ``Some Classes
of Semisimple Group
(and Loop) Algebras over Finite Fields'' that has been provisionally
accepted for publication in the \emph{Journal of Algebra}.}
\\ \today}
\subjclass[2000]{Primary 20C05; Secondary 94B05, 11T71, 17D05}


\begin{abstract}
We determine the structure of the semisimple group algebra of certain groups
over the rationals and over those
finite fields where the Wedderburn decompositions have the least number of simple components.
We apply our work to obtain similar information about the loop
algebras of indecomposable RA loops and to produce
negative answers to the isomorphism problem over various fields.
\end{abstract}

\maketitle

\section{Introduction}\label{sec:intro}
This work was initially motivated by connections with coding theory.
Historically, the first class of codes that became prominent
were \emph{linear} in the sense that the code words form a subspace of a finite
dimensional vector space over a finite field.   An important subclass
of linear codes are \emph{cyclic}, that is, whenever $\alpha=(\alpha_0,\alpha_1,\ldots,\alpha_{n-1})$
is a code word, so is the cyclic shift $\alpha'=(\alpha_{n-1},\alpha_0,\alpha_1\ldots, \alpha_{n-2})$.
If one identifies $\alpha$ with the element $\sum_{i=0}^{n-1}\alpha_i a^i$
of the group algebra $KG$ of the cyclic group $G=\langle a\rangle$ of order $n$
over the field $K$, then a cyclic code is precisely an ideal of $KG$, the
cyclic shift $\alpha\mapsto\alpha'$ corresponding to multiplication
by $a$ in the group algebra.  Thus, cyclic codes can be studied entirely
in the language of group algebras, an observation that gave birth to the idea of a
\emph{group code}, which is simply an ideal in a group algebra.
Since any ideal of a semisimple finite dimensional
group algebra is generated by an idempotent, that is, it has the form $(KG)e$
for some $e=e^2$, and since all idempotents are sums of idempotents that
generate the simple components of $KG$ in the Wedderburn
decomposition, it becomes important to find these simple components.

In several cases that have been studied to date,
the minimal number of simple
components in the Wedderburn decomposition of $KG$, with $K$ a finite field,
was equal to the number of simple components in the decomposition of the rational
group algebra $\B{Q}G$ so that the principal idempotents can be computed
easily, from the subgroup structure of $G$.
Since the groups that appear in this paper
are $2$-groups,
the number of simple components in the decomposition of $KG$ is almost never the same
as the corresponding number for $\B{Q}G$
(the former number is usually greater) \cite[Corollary 2]{Ferraz:09},
so the obvious question becomes just what the minimal number is.  One would
expect this to depend on the field, and it does.

In a final section of this paper, we apply our work
to alternative loop algebras.
Some details of the
structure of the rational loop algebras of indecomposable
RA loops are rediscovered and extended.
Specifically, we consider the semisimple structure of $KL$ with $K$ a finite field
that produces the minimal number of simple components.
We also exhibit counterexamples
to the isomorphism problem over the rationals and finite fields.
\smallskip

The groups $G$ of interest in this paper are such that
$G/\C{Z}(G)\iso C_2\times C_2$,
where $\C{Z}(G)$ denotes the centre of $G$.  These groups first arose in work of
the second author \cite{EGG:83} who was studying Moufang loops whose loop rings
are alternative.  Since then, they have appeared in many other contexts related
to involutions.  They appear in work concerned with commutativity and anticommutativity
of symmetric elements in group rings  \cite{EGG:09c,Cristo:07,Jespers:05,Jespers:06}
and also regarding Lie
properties of these elements \cite{Giambruno:09,Sehgal:09a,Sehgal:09b}.

It can be shown that there are just five classes of such groups
that are \emph{indecomposable}.  Moreover, any group
with $G/\C{Z}(G)\iso C_2\times C_2$ is the direct product of an
indecomposable group with an abelian group.  Here is a description of these classes.
\begin{align*}
\C{D}_1 &: \langle  x,y,t_1 \mid x^2 = y^2 = t_1^{2^m} = 1,m\ge1  \rangle  \\
\C{D}_2 &: \langle  x,y,t_1 \mid x^2 = y^2 = t_1, t_1^{2^m} = 1,  m\ge1 \rangle  \\
\C{D}_3 &: \langle  x,y,t_1,t_2 \mid x^2 = t_1^{2^{m_1}} = t_2^{2^{m_2}} =  1,
y^2 = t_2, m_1,m_2\ge1\rangle  \\
\C{D}_4 &: \langle  x,y,t_1,t_2 \mid x^2 = t_1, y^2 = t_2,
t_1^{2^{m_1}} = t_2^{2^{m_2}} = 1, m_1,m_2\ge1 \rangle  \\
\C{D}_5 &: \langle  x,y,t_1,t_2,t_3 \mid \\
&  \qquad \qquad x^2 =t_2, y^2 = t_3,
t_1^{2^{m_1}} = t_2^{2^{m_2}} = t_3^{2^{m_3}} = 1, m_1,m_2,m_2\ge1  \rangle.
\end{align*}
Each group in any of these classes is a $2$-group
and, in each case, $(x,y)=s=t_1^{2^{m_1-1}}$ is a unique
nontrivial commutator.  The centre of a group in any class is the direct product
of the cyclic subgroups generated by those of $t_1,t_2,t_3$ which
appear.  We refer the reader to Chapter~V of \cite{EGG:96} for details.

\section{The Number of Simple Components of $KG$}
Let $G$ be a finite group of exponent $e$ and
let $K$ be a field with $\ch K\notdivides |G|$.  Let $\zeta$
denote a primitive $e$th root of $1$.  For each $\sigma\in\Gal(K(\zeta),F)$,
we have $\sigma(\zeta)=\zeta^r$ for some positive integer $r$, so maps
of the type $g\mapsto g^r$ corresponding to such $\sigma$ define an action
on $G$.  Two conjugacy classes of $G$ are said
to be \emph{$K$-conjugate} if they correspond under this action.
This notion of $K$-conjugacy is an equivalence relation on the conjugacy classes of $G$
and the corresponding equivalence classes are called \emph{$K$-classes}.  For $x\in G$,
we use
the notation $\C{C}_K(x)$ for the $K$-conjugate class of the conjugacy
class of $x$.  These classes are
important because
a theorem of Witt-Berman \cite[Theorems 21.5 and 21.25]{Curtis:88},
later proven by R.~Ferraz entirely in group ring theoretical terms
with no appeal to character theory \cite{Ferraz:04},
says that the number of simple components of the semisimple group algebra
$KG$ is the number of $K$-classes of $G$.

The groups $G$ that appear in this paper, from the classes
$\C{D}_1,\ldots,\C{D}_5$ described in the previous section,
all are generated by elements of orders a power of $2$ and a central commutator
subgroup $G'=\{1,s\}$ of order two.  It follows that conjugacy classes
are singletons $\{a\}$ if $a\in\C{Z}(G)$ and of the form $\{w,sw\}$ if $w\notin\C{Z}(G)$.
The Galois group $\Gal(\B{Q}(\zeta),\B{Q})$ is the group $\C{U}=\C{U}(\B{Z}_e)$
of units of the integers mod~$e$, so the \B{Q}-class corresponding
to a central element $a$ is the set $\{a,a^3,a^5,\ldots, \}$
of odd powers of $a$.  Notice that this set
is precisely the set of all generators of the cyclic subgroup $\langle a\rangle$
generated by $a$.
Thus the number of \B{Q}-classes of the centre
of $G$ is the number of cyclic subgroups of $\C{Z}(G)$.
On the other hand, the \B{Q}-class containing a noncentral element $w\in G$, that is,
the \B{Q}-class of the conjugacy class $\{w,sw\}$, is
\begin{equation*}
\C{C}_\B{Q}(w)=\{w,w^3,w^5,\ldots, \} \cup\{sw, sw^3, sw^5,\ldots\}.
\end{equation*}

Let $q$ be a power of an odd prime and let $K$ be the field of order $q$.
The Galois group $\Gal(K(\zeta),K)$ is generated by the Frobenius automorphism
$\zeta\mapsto \zeta^q$, so the $K$-class corresponding to a central element
$a\in G$ is $\{a, a^q,a^{q^2}, a^{q^3},\ldots\}$ and the $K$-class
containing a noncentral element $w$ is
\begin{equation*}
\C{C}_K(w)=\{w,w^q,w^{q^2},w^{q^3},\ldots\} \cup \{sw, sw^q,sw^{q^2},sw^{q^3},\ldots\}.
\end{equation*}
Much of this paper is devoted to determining the number of \B{Q}- and $K$-classes
of various groups.  In this task, the following lemmas
will be a big help.

\begin{lem}\label{lem:critical} For any integer $r\ge3$ and any odd integer $q$,
the maximal order of $q\pmod{2^r}$ is $2^{r-2}$.  Any $q\equiv3\pmod{8}$ has
this maximal order.
\end{lem}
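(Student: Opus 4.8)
The plan is to reduce the whole statement to a computation of the $2$-adic valuation $v_2(q^{2^j}-1)$, since the multiplicative order of $q$ modulo $2^r$ is the least power $2^j$ for which $2^r \divides q^{2^j}-1$. First I would record that this order is automatically a power of $2$, because it divides the order $2^{r-1}$ of the unit group $\C{U}(\B{Z}_{2^r})$. Hence the order equals $2^j$, where $j$ is the least nonnegative integer with $v_2(q^{2^j}-1)\ge r$, and the problem becomes one of tracking how $v_2(q^{2^j}-1)$ grows with $j$.

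The key step is an elementary squaring lemma. Writing an odd number as $1+2^a u$ with $u$ odd, I would verify directly that
\[
(1+2^a u)^2 = 1 + 2^{a+1}\bigl(u + 2^{a-1}u^2\bigr),
\]
so that whenever $a\ge 2$ the inner factor remains odd and one squaring raises the valuation by \emph{exactly} $1$. For odd $q$ the very first squaring is special: since exactly one of $q-1,q+1$ is $\equiv 2\pmod 4$ while the other is divisible by $4$, we get $v_2(q^2-1)=v_2(q-1)+v_2(q+1)\ge 3$, after which $q^2\equiv 1\pmod 4$ and the lemma applies repeatedly. A straightforward induction then gives, for every $j\ge 1$,
\[
v_2(q^{2^j}-1) = v_2(q^2-1) + (j-1).
\]

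With this formula the order of $q$ is $2^j$ for the least $j\ge 1$ with $v_2(q^2-1)+(j-1)\ge r$; as long as $v_2(q^2-1)\le r$ this order is $2^{\,r - v_2(q^2-1)+1}$. It is largest precisely when $v_2(q^2-1)$ is smallest, and since $v_2(q^2-1)\ge 3$ for every odd $q$, the maximal order is $2^{r-2}$ (legitimate because $r\ge 3$ forces $r-2\ge 1$, and the boundary case $r=3$ checks out). For the last assertion I would simply note that $q\equiv 3\pmod 8$ gives $q-1\equiv 2\pmod 4$ and $q+1\equiv 4\pmod 8$, so $v_2(q^2-1)=1+2=3$ attains the minimum and hence the maximal order $2^{r-2}$.

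The main obstacle is the \emph{exactness} in the squaring lemma: one must guarantee that each squaring increases the valuation by exactly $1$ rather than by more, which rests on $u+2^{a-1}u^2$ staying odd, and therefore on having $a\ge 2$. This is exactly why the first squaring of a $q\equiv 3\pmod 4$ must be isolated and treated separately before the induction can take over.
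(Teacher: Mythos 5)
Your proof is correct, but it takes a genuinely different route from the paper's. The paper quotes the structure theorem $\C{U}_{2^r}\iso C_2\times C_{2^{r-2}}$ together with the fact that residues $\equiv5\pmod{2^r}$ have maximal order (citing Ireland--Rosen), and then runs a counting argument: the set of generators of $\langle\overline{5}\rangle$ and the set of residues $\equiv5\pmod8$ both have cardinality $2^{r-3}$ and one contains the other, so they coincide; finally $q\equiv3\pmod8$ is handled by passing to $-q\equiv5\pmod8$ and using $q^{2^\ell}=(-q)^{2^\ell}$ for $\ell\ge1$. You instead compute the order of an arbitrary odd $q$ directly from the $2$-adic valuation $v_2(q^{2^j}-1)$, using the exact-increment squaring lemma (valid once $v_2(x-1)\ge2$) after isolating the first squaring, which forces $v_2(q^2-1)\ge3$. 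Your argument is more self-contained --- it needs only $|\C{U}_{2^r}|=2^{r-1}$ rather than the cyclic decomposition of the unit group or the special role of $5$ --- and it yields more: an explicit formula $2^{\,r-v_2(q^2-1)+1}$ for the order of every odd $q$ (when $v_2(q^2-1)\le r$), from which both the upper bound $2^{r-2}$ and the characterization of which $q$ attain it fall out at once. The paper's approach is shorter on the page because it outsources the hard part to a cited result, but it only identifies the classes $q\equiv3,5\pmod8$ as maximal, whereas your valuation formula classifies all cases uniformly. Your handling of the one delicate point --- that each squaring past the first raises the valuation by exactly one, which is what prevents the order from collapsing below $2^{r-2}$ --- is exactly right.
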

\begin{proof}
It is known that $\C{U}_{2^r}$,
the unit group of $\B{Z}_{2^r}$, is $C_2\times C_{2^{r-2}}$ and that any integer
$q\equiv5\pmod{2^r}$ has maximal order $2^{r-2}$ \cite[Proposition 4.2.2]{Ireland:82},
so the import of this Lemma is that any $q\equiv3\pmod8$ has this same order.
First, we argue that $q$ need only be $5\pmod8$ to have maximal order in $\C{U}_{2^r}$.
To see this, let $A=\{5^{2i+1}\mid 1\le 5^{2i+1}< 2^r\}$
be the set of generators of $\langle \overline{5}\rangle$ in $\C{U}_{2^r}$
and let $B=\{q\in\B{Z}\mid 1\le q<2^r,\; q\equiv5\pmod{8}\}$.
Since $5^{2i+1}=25^i(5)\equiv5\pmod{8}$, we have $A\subseteq B$.
Now $|A|=\tfrac12(2^{r-2})=2^{r-3}$ because $5$ has order $2^{r-2}\pmod{2^r}$.
On the other hand, there is a unique integer congruent to $5\pmod8$
in each of the intervals $[1,8)$, $[8,16)$, $[16,24)$, \ldots, $[2^3(2^{r-3}-1),2^r)$
so $|B|=2^{r-3}$ too.  Thus $A=B$ showing, indeed, that any $q\equiv5\pmod{8}$
has order $2^{r-2}\pmod{2^r}$.  In particular, any  $q\equiv-3\pmod{8}$ has this
order modulo $2^r$, and hence so does any $q\equiv3\pmod8$ because
because $q^{2^\ell}=(-q)^{2^\ell}$ for $\ell\ge1$.
\end{proof}

\begin{lem}\label{lem1} Let $G$ be an abelian group,
let $K$ be a field of odd order $q$ and let $a\in G$ are generated by elements of order $2^r$, $r\ge0$.
\begin{enumerate}
   \item[(i)] If $r=0$ or $1$, then $\C{C}_K(a)=\C{C}_\B{Q}(a)$.
   \item[(ii)] If $r=2$ and $q\equiv 1\pmod4$, then $\C{C}_\B{Q}(a)$ splits
into two $K$-classes.  If $r=2$ and $q\equiv3\pmod4$, then $\C{C}_\B{Q}(a)=\C{C}_K(a)$.
   \item[(iii)] If $r\ge 3$, then $\C{C}_\B{Q}(a)$ splits into at least two $K$-classes,
and exactly two when $q\equiv3\pmod8$.
\end{enumerate}
\end{lem}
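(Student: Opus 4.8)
The plan is to realise both $\C{C}_\B{Q}(a)$ and $\C{C}_K(a)$ as orbits of a single group action and then count by an index. Write $2^r$ for the order of $a$. Because $\C{C}_\B{Q}(a)$ is the set of generators of $\langle a\rangle$, the map $u\mapsto a^u$ is a bijection from the unit group $\C{U}_{2^r}=\C{U}(\B{Z}_{2^r})$ onto $\C{C}_\B{Q}(a)$, and it carries multiplication by $v$ on $\C{U}_{2^r}$ to the power map $a^u\mapsto a^{uv}$ on generators. In particular the Frobenius map $w\mapsto w^q$ defining $K$-conjugacy corresponds to multiplication by the residue $\bar q$. Hence the $K$-classes inside $\C{C}_\B{Q}(a)$ are precisely the images of the cosets of the cyclic subgroup $\langle\bar q\rangle\le\C{U}_{2^r}$, and their number equals the index
\[
[\C{U}_{2^r}:\langle\bar q\rangle]=\frac{2^{r-1}}{d},
\]
where $d$ is the order of $\bar q$ in $\C{U}_{2^r}$ and we use $|\C{U}_{2^r}|=2^{r-1}$ for $r\ge1$. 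Everything now reduces to determining $d$.

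Parts (i) and (ii) are small cases treated directly. For $r=0$ both classes equal $\{1\}$, and for $r=1$ the group $\C{U}_2$ is trivial, so $a$ alone constitutes both classes and $\C{C}_K(a)=\C{C}_\B{Q}(a)$. For $r=2$ we have $\C{U}_4=\{\bar1,\bar3\}\iso C_2$: if $q\equiv1\pmod4$ then $\bar q=\bar1$, so $d=1$ and the index is $2$, giving two $K$-classes; if $q\equiv3\pmod4$ then $\bar q=\bar3$ generates $\C{U}_4$, so $d=2$, the index is $1$, and $\C{C}_K(a)=\C{C}_\B{Q}(a)$.

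For part (iii), \lemref{lem:critical} supplies the control of $d$ that is needed. When $r\ge3$ that lemma bounds $d$, the order of $q$ modulo $2^r$, by the maximal value $2^{r-2}$, so the index $2^{r-1}/d$ is at least $2^{r-1}/2^{r-2}=2$; thus $\C{C}_\B{Q}(a)$ splits into at least two $K$-classes. If moreover $q\equiv3\pmod8$, the same lemma guarantees that $q$ attains this maximal order, i.e.\ $d=2^{r-2}$, whence the index is exactly $2$ and there are precisely two $K$-classes. I do not anticipate a genuine difficulty in the estimates themselves; the one step requiring care is the reduction in the first paragraph, namely checking that $\C{U}_{2^r}$ acts \emph{regularly} on the generators of $\langle a\rangle$, so that $K$-classes correspond bijectively to the cosets of $\langle\bar q\rangle$. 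The rest is the bookkeeping of the degenerate cases $r\le 2$ together with the insertion of the order of $q$ furnished by \lemref{lem:critical}.
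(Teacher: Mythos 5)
Your proposal is correct and follows essentially the same route as the paper: both arguments reduce the count of $K$-classes inside $\C{C}_\B{Q}(a)$ to the index $[\C{U}_{2^r}\colon\langle\overline{q}\rangle]$ and then invoke \lemref{lem:critical} to bound (and, for $q\equiv3\pmod8$, to pin down) the order of $\overline{q}$. The only cosmetic difference is that you package the key step as the regularity of the multiplication action of $\C{U}_{2^r}$ on the generators of $\langle a\rangle$, whereas the paper verifies directly that every $K$-class $\C{C}_K(a^v)$ with $v$ odd has the same length $k=|\langle\overline{q}\rangle|$; these are the same observation.
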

\begin{proof} (i) If $r=0$, then $a=1$ and the statement is trivial.
If $r=1$, then $a^k=a$ for all odd integers $k$, so
$\C{C}_K(a)=\C{C}_\B{Q}(a)=\{a\}$.
\smallskip\par (ii)  Now assume that $r=2$.  Then $\C{C}_\B{Q}(a)=\{a,a^3\}$.
If $q\equiv1\pmod4$, then $a^q=a$ and $(a^3)^q=a^3$ because $a$ has order $4$,
so $\C{C}_K(a)=\{a\}$ and $\C{C}_K(a^3)=\{a^3\}$.  Clearly $\C{C}_\B{Q}(a)$
splits into two $K$-classes.  On the other hand, if $q\equiv3\pmod4$,
then $a^q=a^3$ and $a^{q^2}=a$, so $\C{C}_\B{Q}(a)=\C{C}_K(a)$.
\smallskip\par (iii) Assume $r\ge3$.
Then $\C{C}_\B{Q}(a)=\{a,a^3,\ldots,a^{2^r-1}\}$ and $\C{C}_K(a)=\{a,a^q,\ldots,a^{q^{k-1}}\}$,
where $k$ is the smallest positive integer such that $a^{q^k}=a$.  Thus
$q^k\equiv1\pmod{2^r}$ with $k$ minimal; that is, $k=|\langle\overline{q}\rangle|$
in $\C{U}_{2^r}$.
This implies $k\le2^{r-2}$ by \lemref{lem:critical}, so $k<2^{r-1}$ and there exists
an element $a^v\in \C{C}_\B{Q}(a)\setminus \C{C}_K(a)$.
As before, we compute $\C{C}_K(a^v)=\{a^v,a^{vq},\ldots,a^{vq^{\ell -1}}\}$,
where $\ell$ is the smallest positive integer such that $a^{vq^\ell}=a^v$.  Thus
$vq^\ell\equiv v\pmod{2^r}$ and, since $v$ is odd, $q^\ell\equiv1\pmod{2^r}$.
So $\ell=|\langle\overline{q}\rangle|=k$. It follows that the $K$-class of $a$ has
size the order of $q\mod{2^r}$ and that $\C{C}_\B{Q}(a)$ is the union of
$[\C{U}_{2^r}\colon\langle\overline{q}\rangle ]$ $K$-classes.

When $q\equiv3\pmod8$,
$q$ has maximal order $2^{r-2}$ by \lemref{lem:critical}.  Since $\C{U}_{2^r}$ has
order $2^{r-1}$, $\C{C}_\B{Q}(a)$ splits into precisely two classes
when $q\equiv3\pmod8$ and otherwise, into more than two classes.
\end{proof}

\begin{rem}
Throughout this paper, we use $F$ consistently to denote a finite field of
an order $q\equiv3\pmod8$ and use \lemref{lem1} to justify claims
that for any finite field $K$, the number of $K$-classes of a certain group---that is, the number of
simple components in the Wedderburn decomposition of $KG$---is minimal when $K=F$.
\end{rem}

Since cyclic groups are very prominent in this work, it is useful to state
as a proposition a fact that will be often needed in the sequel.  This lemma
allows us to give a short proof of a result that also appears in \cite{Pruthi:01}.

\begin{prop}\label{prop2} Let $A=\langle t\rangle$ be a cyclic group of order $2^m$
and let $K$ be any field of characteristic different from $2$.
If $m=1$, there are two $K$-classes in $A$.
If $m>1$, the number of $K$-classes of $A$ is $m+1$ if $K=\B{Q}$ and at least
$2m-1$ if $K$ is finite of odd order $q$.  This minimal number is achieved if
$q\equiv3\pmod8$.
\end{prop}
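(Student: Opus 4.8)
The plan is to reduce everything to \lemref{lem1} by sorting the elements of $A$ according to their order. In a cyclic group two elements generate the same cyclic subgroup precisely when they have the same order, and---as observed in the introduction---the \B{Q}-class of an element is exactly the set of generators of the cyclic subgroup it generates. Hence $A=\langle t\rangle$ has exactly one \B{Q}-class for each attainable order $2^j$, $0\le j\le m$, namely the set of all elements of that order, and so the number of \B{Q}-classes equals the number $m+1$ of cyclic subgroups of $A$. This settles the $K=\B{Q}$ count directly. When $m=1$ the group has exponent $2$; the relevant root of unity is $-1\in K$, the Galois group $\Gal(K(\zeta),K)$ is trivial, and each of the two elements of $A$ is its own $K$-class, giving exactly two $K$-classes over every field $K$ of characteristic $\ne2$.

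For the remaining (finite-field) assertions, fix $K$ of odd order $q$ and write $c_j$ for the number of $K$-classes into which the \B{Q}-class of the elements of order $2^j$ splits, so that the total number of $K$-classes of $A$ is $\sum_{j=0}^m c_j$. Now \lemref{lem1} evaluates each $c_j$: clearly $c_0=1$, and $c_1=1$ by part~(i); part~(ii) gives $c_2\ge1$, with $c_2=1$ precisely when $q\equiv3\pmod4$; and part~(iii) gives $c_j\ge2$ for every $j\ge3$, with $c_j=2$ when $q\equiv3\pmod8$.

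Summing these estimates yields the lower bound. Assuming $m\ge2$,
\begin{equation*}
\sum_{j=0}^m c_j \;\ge\; c_0+c_1+c_2+\sum_{j=3}^m c_j \;\ge\; 1+1+1+2(m-2)\;=\;2m-1,
\end{equation*}
where the sum over $j\ge3$ is empty (contributing $0$) when $m=2$, in which case the bound reads $3=2m-1$. For the sharpness claim, suppose $q\equiv3\pmod8$. Then $q\equiv3\pmod4$, so $c_2=1$ by part~(ii), and $c_j=2$ for each $j\ge3$ by part~(iii); the displayed inequalities become equalities and the total is exactly $2m-1$.

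I expect no serious obstacle here: once the elements are grouped by order, the proposition is pure bookkeeping on top of \lemref{lem1}. The only points demanding care are the clean identification of each order class with a single \B{Q}-class (this is exactly what makes \lemref{lem1} applicable term by term) and the correct handling of the small cases, especially $m=2$, where the contributions from $j\ge3$ are vacuous and yet the count $2m-1$ must still come out right.
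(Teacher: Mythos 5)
Your proof is correct and follows essentially the same route as the paper's: identify the $\B{Q}$-classes of $A$ with the sets of generators of its $m+1$ cyclic subgroups, then apply \lemref{lem1} to each one according to the order of its elements to count how many $K$-classes it splits into. The paper lists the classes $\C{C}_0,\ldots,\C{C}_m$ explicitly and then does the same bookkeeping, so the two arguments differ only in presentation.
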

\begin{proof}
Clearly $\C{C}_K(1)=\{1\}$ for any $K$.
If $m=1$, then $A=\{1,t\}$ and $\C{C}_K(t)=\{t\}$ for any $K$ because
$t^v=t$ for any odd $v$.  Assume $m>1$.
If $i$ is odd, then $t^i$ belongs to $\C{C}_{\B{Q}}(t)$, the \B{Q}-conjugate
class of $t=t^{2^0}$.  If $i$ is even, write $i=2^{i_0}i_1$ with $i_1$ odd.  Then for any odd $j$,
$(t^i)^j=t^{2^{i_0}i_1j}$ with $i_1j$ odd.  Thus $t^i\in\C{C}_{\B{Q}}(t^{2^{i_0}})$.
It follows that the \B{Q}-classes of $A$ consist of the odd powers of $t^{2^i}$,
$i=0,1,2\ldots,m$, and here they are.
\begin{align}
\C{C}_0 &=\C{C}_{\B{Q}}(t)= \{t, t^3, t^5, \ldots, t^{2^m-1}\} \notag \\
\C{C}_1 &=\C{C}_{\B{Q}}(t^2)= \{t^2, t^6, t^{10}, \ldots, t^{2(2^{m-1}-1)}\} \notag \\
\C{C}_2 &=\C{C}_{\B{Q}}(t^4)= \{t^4, t^{12}, t^{20},\ldots, t^{4(2^{m-2}-1)})\} \notag \\
\vdots \notag \\
\C{C}_i &=\C{C}_{\B{Q}}(t^{2^i})= \{t^{2^i},t^{2^i(3)},t^{2^i(5)},\ldots,t^{2^i(2^{m-i}-1)}\} \label{eq2} \\
\vdots \notag \\
\C{C}_{m-3} &= \C{C}_{\B{Q}}(t^{2^{m-3}})=\{t^{2^{m-3}},t^{2^{m-2}(3)}, t^{2^{m-2}(5)}, t^{2^{m-2}(7)}\} \notag \\
\C{C}_{m-2} &= \C{C}_{\B{Q}}(t^{2^{m-2}}) =\{t^{2^{m-2}},t^{2^{m-2}(3)}\}=\{t^{2^{m-2}},st^{2^{m-2}}\}  \notag \\
\C{C}_{m-1} &= \C{C}_{\B{Q}}(t^{2^{m-1}})=\C{C}(s)=\{s\}, \quad\text{where we have set $s=t^{2^{m-1}}$,} \notag \\
\C{C}_m &= \C{C}_{\B{Q}}(t^{2^m})=\{1\}. \notag
\end{align}
The last two \B{Q}-classes,
$\C{C}_{m-1}=\{s\}$ and $\C{C}_m=\{1\}$, are clearly $K$-classes for any field $K$
and, if $q\equiv3\pmod{8}$, the same holds for $\C{C}_{m-2}$ (by \lemref{lem1})
because $t^{2^{m-2}}$ has order $2^2$.
On the other hand, \lemref{lem1} also says that if $q\equiv3\pmod8$,
all the other \B{Q}-classes split
into precisely two $K$-classes, giving a total of $2(m-2)+3=2m-1$ $K$-classes in all.
\end{proof}

The next result will be useful in counting $K$-classes of non-central elements.

\begin{lem}\label{lem:xclasses} Let $x$ be an element of $G$ with $x\notin\C{Z}(G)$ and
$H$ be the subgroup of $G$ generated by $\C{Z}(G)$ and $x$.
Let $K$ be any field with $\ch K\notdivides |G|$.
Then the number of $K$-classes corresponding to conjugacy classes of
the type $\{ax,sax\}$, $a\in \C{Z}(G)$, is the number of $K$-classes
of $H/\langle s\rangle$ less the number of $K$-classes of $\C{Z}(G)/\langle s\rangle$.
\end{lem}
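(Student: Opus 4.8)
The plan is to transfer the whole count to the abelian quotient $\overline{H}=H/\langle s\rangle$ through the canonical projection $\pi\colon H\to\overline{H}$, exploiting the fact that $\pi$ annihilates precisely the element $s$ that distinguishes the two members $w,sw$ of a noncentral conjugacy class. First I would record the structural facts. Because $G/\C{Z}(G)\iso C_2\times C_2$ forces $x^2\in\C{Z}(G)$, and $x$ commutes with the central subgroup, the subgroup $H=\langle\C{Z}(G),x\rangle$ is abelian with $H=\C{Z}(G)\cup x\C{Z}(G)$. Hence the noncentral elements of $H$ are exactly the elements $ax$ with $a\in\C{Z}(G)$, and each has $G$-conjugacy class $\{ax,sax\}$; these are precisely the classes named in the statement. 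Since $s\in\C{Z}(G)\subseteq H$, the image $\C{Z}(G)/\langle s\rangle$ is a subgroup of $\overline{H}$, and both these groups being abelian, their conjugacy classes are singletons, so a $K$-class there is simply a Galois orbit $\{\overline{g},\overline{g}^{\,q},\overline{g}^{\,q^2},\dots\}$ (the set of odd powers when $K=\B{Q}$).

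Next I would observe that the action $g\mapsto g^r$ defining $K$-conjugacy commutes with $\pi$, since $\pi$ is a homomorphism and $\pi(g^r)=\pi(g)^r$. Because $\pi(s)=1$ collapses the two families appearing in $\C{C}_K(ax)=\{(ax)^{q^j}\}\cup\{s(ax)^{q^j}\}$ into one, $\pi$ carries the $K$-class of the $G$-conjugacy class $\{ax,sax\}$ onto the $K$-class $\C{C}_K(\overline{ax})$ of $\overline{ax}$ in $\overline{H}$. As the exponents $r$ that occur are all odd and $\overline{x}$ has order two modulo $\C{Z}(G)/\langle s\rangle$, each $K$-class of $\overline{H}$ lies either wholly inside $\C{Z}(G)/\langle s\rangle$ or wholly outside it. Thus $\pi$ induces a well-defined map from the $K$-classes of type $\{ax,sax\}$ to the $K$-classes of $\overline{H}$ not contained in $\C{Z}(G)/\langle s\rangle$.

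The heart of the argument is that this map is a bijection. Surjectivity is immediate, since every element outside $\C{Z}(G)/\langle s\rangle$ has the form $\overline{ax}$. For injectivity I would argue that if $\overline{bx}$ and $\overline{ax}$ lie in one $K$-class of $\overline{H}$, say $\overline{bx}=\overline{ax}^{\,r}=\overline{(ax)^{r}}$, then $bx$ equals $(ax)^{r}$ or $s(ax)^{r}$ in $H$, and in either case $bx\in\C{C}_K(ax)$ by the very definition of the $K$-class of a noncentral element; hence $\{ax,sax\}$ and $\{bx,sbx\}$ already lie in a single $K$-class of $G$. This step---checking that collapsing $\langle s\rangle$ never fuses two genuinely distinct $K$-classes of noncentral elements---is the main obstacle, and it goes through precisely because each such $K$-class already contains both $w$ and $sw$.

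Finally I would partition the $K$-classes of $\overline{H}$ into those contained in the subgroup $\C{Z}(G)/\langle s\rangle$ and the rest. Since $\C{Z}(G)/\langle s\rangle$ is closed under $g\mapsto g^r$, the former are exactly the $K$-classes of $\C{Z}(G)/\langle s\rangle$, while the latter are in bijection with the $K$-classes of type $\{ax,sax\}$ by the previous paragraph. Counting therefore yields
\[
\#\{K\text{-classes of }\overline{H}\}=\#\{K\text{-classes of }\C{Z}(G)/\langle s\rangle\}+\#\{K\text{-classes of type }\{ax,sax\}\},
\]
and solving for the last summand gives the claimed formula.
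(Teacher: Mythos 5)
Your proposal is correct and follows essentially the same route as the paper's proof: both pass to the quotient $H/\langle s\rangle$ via the canonical projection, match the $K$-classes of type $\{ax,sax\}$ bijectively with the $K$-classes of $H/\langle s\rangle$ lying outside $\C{Z}(G)/\langle s\rangle$, and count by subtraction. You simply spell out the well-definedness and injectivity checks that the paper's one-to-one correspondence $\{ax,sax\}\mapsto\{\overline{ax}\}$ leaves implicit.
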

\begin{proof}
Let $\pi$ denote the canonical projection $H\to H/\langle s\rangle$.
The correspondence $\pi\colon \{ax,sax\}\mapsto \{\overline{ax}\}$ is one-to-one,
so the number of $K$-classes of $G$ corresponding to conjugacy classes of
the type $\{ax,sax\}$ is the number of $K$-classes of $H/\langle s\rangle$
corresponding to elements of the type $\overline{ax}$, and this is the
total number of $K$-classes in $H/\langle s\rangle$ less the number
in which $x$ does not appear, which is precisely the number of $K$-classes of $\C{Z}(G)/\langle s\rangle$.
\end{proof}

\section{Groups of type $\C{D}_1$} \label{secd1}
Let
\begin{equation*}
G=\langle t,x,y\mid t^{2^m}=1, x^2=y^2=1, t \text{ central}, (x,y)=t^{2^{m-1}}=s\rangle
\end{equation*}
be a group of type $\C{D}_1$.
As with all groups in this paper, it is important to
note that $G$ has a two-element commutator subgroup $G'$
generated by a central element $s$ of order $2$ and so conjugacy classes
of noncentral elements are of the form $\{w,sw\}$.
The elements of $G$ can be written uniquely in the form
$t^ix^\delta y^\epsilon$, $i=0,1,2,\ldots, 2^m-1$, $\delta,\epsilon=0,1$.
Thus $G$ has order $4(2^m)$.

When $m=1$, $G=D_4$ and $KD_4=K\oplus K\oplus K\oplus K\oplus M_2(K)$
for any field $K$ (of characteristic different from $2$).

When $m=2$, $G$ is $16\Gamma_2b$ in the Hall and Senior
notation \cite{Hall:64},
$\C{Z}(G)=\langle t\rangle = \{1,t,t^2,t^3\}$ and $s=t^2$.
As in \propref{prop2},
there are three \B{Q}-classes,
\begin{equation*}
\C{C}_\B{Q}(1) = \{1\}, \quad \C{C}_\B{Q}(t) = \{t,t^3\} \quad\text{ and }\quad \C{C}_\B{Q}(t^2) = \{t^2\}.
\end{equation*}
For any finite field $K$ of odd order $q$, the first and third classes are also $K$-classes,
while $\C{C}_\B{Q}(t)$ splits into two $K$-classes if $q\equiv1\pmod4$ (but not otherwise).
The conjugacy classes of the form $\{ax,sax\}$, $a\in\C{Z}(G)$ are
\begin{equation*}
\{x,sx\}=\{x,t^2x\} \quad\text{ and } \quad \{tx,stx\}=\{tx,t^3x\}.
\end{equation*}
Each of these is a $K$-class for any (odd) $q$.  The situation is identical
for conjugacy classes of the form $\{ay,say\}$ and $\{axy,saxy\}$, $a\in\C{Z}(G)$.
In all, we have $3+3(2)=9$ \B{Q}-classes and nine $K$-classes if $q\equiv3\pmod8$.
For other $q$, it matters to us only
that the number of $K$-classes is more than $9$.  Thus $\B{Q}G$ is
the direct sum of nine simple algebras while $KG$ is the sum
of at least nine simple algebras, exactly nine when
$q\equiv3\pmod8$.

In this paragraph, let $K$ denote any field of characteristic different from $2$ and let $G$
be a group from any of the classes $\C{D}_1,\ldots, \C{D}_5$.  It is
basic to an understanding of this paper that $K$
is an algebra direct sum, $KG\iso K[G/G']\oplus \Delta(G,G')$,
with $K[G/G']$ a direct sum of fields (we call this the ``commutative part''
of $KG$) and $\Delta(G,G')$ an ideal which is the direct sum of quaternion algebras
(we call $\Delta(G,G')$ the ``noncommutative part'' of $KG$) some of which might
be rings of $2\times2$ matrices.
(See, for example, \cite[Proposition 3.6.7]{Milies:02}.)
With $G$ a group in $\C{D}_1$ and $m=2$, $G/G'=\langle \overline{t},\overline{x},\overline{y}\rangle
\iso C_2\times C_2\times C_2$, an abelian group with eight cyclic subgroups and hence eight
\B{Q}-classes.  Each, being a singleton, is also a $K$-class.
Thus $K[G/G']$ is the direct sum of eight copies of $K$,
the commutative part of $KG$ is the direct sum of eight copies of $K$, and
$KG$ is the direct sum of eight copies of $K$ and at least one quaternion algebra,
precisely one when $K=\B{Q}$ or when $K$ is a finite field of order $q\equiv3\pmod8$.
\medskip

Now assume that $m\ge3$ (and $G$ is a group in $\C{D}_1$) and remember that
we always use $F$ to denote a finite field of order $q\equiv3\pmod8$.
The centre of $G$ is $\C{Z}(G)=\langle t\rangle\iso C_{2^m}$,
so \propref{prop2} says that
$\C{Z}(G)$ contains $N_1=m+1$ \B{Q}-classes and $M_1=2m-1$ $F$-classes.

In this and the remaining sections of this paper, we will consistently use
\begin{center}
\begin{tabular}{lp{288pt}}
$N_1,M_1$ & to denote the number of \B{Q}- and $F$-classes, respectively, of the center of a group  $G$,\\[6pt]
$N_2,M_2$ & for the number of \B{Q}- and $F$-classes, respectively, that correspond to conjugacy classes of the sort $\{ax,sax\}$, $a\in \C{Z}(G)$, \\[6pt]
$N_3,M_3$ & for the number of \B{Q}- and $F$-classes, respectively, that correspond to conjugacy classes of the sort $\{ay,say\}$, $a\in \C{Z}(G)$, \\[6pt]
$N_4,M_4$ & for the number of \B{Q}- and $F$-classes, respectively, that correspond to conjugacy classes of the sort $\{axy,saxy\}$, $a\in \C{Z}(G)$, and\\[6pt]
$N_0,M_0$ & for the number of \B{Q}-classes and $F$-classes, respectively, of $G/G'$.
\end{tabular}
\end{center}
\medskip

We also implicitly use the fact that the number of cyclic
subgroups in a direct product of the sort $C_2\times H$ is
twice the number of cyclic subgroups in~$H$.

Let $H=\langle \C{Z}(G),x\rangle=\langle t,x\rangle$.
Then $H/\langle s\rangle=\langle\overline{t},\overline{x}\rangle\iso C_{2^{m-1}}\times C_2$
contains $2m$ cyclic subgroups, so $2m$ \B{Q}-classes.  Six of these correspond to cyclic
subgroups generated by elements of order at most $4$, so, using \lemref{lem1}, we see
that six \B{Q}-classes are also $F$-classes while the remaining $2m-6$ \B{Q}-classes
split into two $F$-classes.  In all, there are $2(2m-6)+6=4m-6$ $F$-classes in $H/\langle s\rangle$.
Now $\C{Z}(G)/\langle s\rangle\iso C_{2^{m-1}}$
contains $m$ \B{Q}-classes, three of which correspond to subgroups generated by elements
of order at most $4$.  This implies $2(m-3)+3=2m-3$ $F$-classes.
By \lemref{lem:xclasses}, there are $N_2=2m-m=m$ \B{Q}-classes
corresponding to conjugacy classes of the form $\{ax,sax\}$---for brevity, we refer
to these as  conjugacy classes ``involving $x$''---and $M_2=(4m-6)-(2m-3)=2m-3$ $F$-classes
involving $x$.  The calculations and numbers for conjugacy classes involving $y$
and involving $xy$ are identical.   In all, we obtain $N=N_1+3N_2=(m+1)+3m=4m+1$ \B{Q}-classes
and $M=M_1+3M_2=(2m-1)+3(2m-3)=8m-10$ $F$-classes.

Since $G/G'\iso C_{2^{m-1}}\times C_2\times C_2$, for any field $K$ of characteristic
different from $2$,
$K[G/G']$ can be regarded as the group algebra of a cyclic group of order
$2^{m-1}$ with coefficients in $K[C_2\times C_2]=K\oplus K\oplus K\oplus K$.
Applying this observation in the case that $K=\B{Q}$, \propref{prop2} tells us that
$\B{Q}C_{2^{m-1}}$ has $(m-1)+1=m$ simple components, so $\B{Q}[G/G']$
has $4m$ simple components.  Thus the commutative part of $\B{Q}G$, which is
$\B{Q}[G/G']$, is the direct sum of $4m$ fields.   Since there are just $4m+1$ simple
components in all, $\B{Q}G$ has just one (necessarily quaternion) noncommutative component
\cite[Corollary VI.4.8]{EGG:96}.

Similar reasoning shows that
$F[G/G']$ is the direct sum of $4(2m-3)=8m-12$ fields, so
$\Delta(G,G')$ is the direct sum of $(8m-10)-(8m-12)=2$
quaternion algebras.

\begin{thm}\label{thm:d1} Let $G$ be a group of type $\C{D}_1$, $m\ge2$.
Then $\B{Q}G$ is the direct sum of $4m$ fields and one quaternion algebra.
Let $K$ be a finite field of odd order $q$.  Then
the Wedderburn decomposition of $KG$ contains
at least $8m-10$ simple components, the minimal number being achieved
if $q\equiv3\pmod8$ in which case the decomposition of $KG$
consists of $8m-12$ fields and two quaternion algebras,
each necessarily a ring of $2\times2$ matrices.
\end{thm}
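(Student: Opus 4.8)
The plan is to assemble the class counts and the commutative/noncommutative splitting $KG\iso K[G/G']\oplus\Delta(G,G')$ established in the discussion preceding the statement, using the Witt--Berman theorem to translate numbers of $K$-classes into numbers of simple components. I would dispose of the rational algebra first, since it behaves uniformly for all $m\ge2$. By \propref{prop2} the centre $\C{Z}(G)\iso C_{2^m}$ contributes $N_1=m+1$ \B{Q}-classes, while \lemref{lem:xclasses} gives $N_2=m$ \B{Q}-classes for each of the three families of noncentral conjugacy classes (those involving $x$, $y$, and $xy$); hence $\B{Q}G$ has $N=N_1+3N_2=4m+1$ simple components. Viewing $\B{Q}[G/G']$ as the group algebra of $C_{2^{m-1}}$ with coefficients in $\B{Q}[C_2\times C_2]=\B{Q}\oplus\B{Q}\oplus\B{Q}\oplus\B{Q}$, \propref{prop2} gives $m$ components for the cyclic factor and hence $4m$ fields for the commutative part; thus $\Delta(G,G')$ accounts for exactly $4m+1-4m=1$ component. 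This component is noncommutative, hence a quaternion algebra by \cite[Corollary VI.4.8]{EGG:96}.

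For a finite field, the Remark following \lemref{lem1} guarantees that the minimum number of classes is attained at $K=F$ with $q\equiv3\pmod8$, and that any other odd $q$ yields at least as many, since \lemref{lem1} shows that such a $q$ can only refine the \B{Q}-classes further. Assuming $m\ge3$, I would read off $M_1=2m-1$ and $M_2=2m-3$ from the preceding computation, so that $FG$ has $M=M_1+3M_2=8m-10$ simple components. Applying \propref{prop2} over $F$ to the cyclic factor $C_{2^{m-1}}$ gives $2m-3$ components, hence $4(2m-3)=8m-12$ fields in the commutative part, and leaves $8m-10-(8m-12)=2$ quaternion components in $\Delta(G,G')$. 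Finally, Wedderburn's little theorem rules out noncommutative finite division algebras, so each of these $4$-dimensional components over $F$ must be $M_2(F)$.

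The only place that genuinely needs separate treatment is the \emph{boundary} value $m=2$. There the quotient $C_{2^{m-1}}$ degenerates to $C_2$, so the count of ``six classes of order at most $4$'' that produced $M_2=2m-3$ breaks down and the finite-field formulas cannot simply be extrapolated downward; instead one falls back on the direct computation given earlier for the case $m=2$, which yields the rational decomposition into $4m=8$ fields and one quaternion algebra, consistent with the uniform rational claim. I expect no serious analytic obstacle anywhere: all the number-theoretic input---the splitting behaviour of roots of unity and the order of $q$ modulo $2^r$---is already packaged in \lemref{lem:critical} and \lemref{lem1}, so the remaining work is the bookkeeping above together with the care required at $m=2$.
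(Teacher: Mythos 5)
Your proposal is correct and follows essentially the same route as the paper: Proposition~\ref{prop2} for the central classes, Lemma~\ref{lem:xclasses} applied to $H/\langle s\rangle\iso C_{2^{m-1}}\times C_2$ versus $\C{Z}(G)/\langle s\rangle\iso C_{2^{m-1}}$ for the classes involving $x$, $y$, $xy$, and the splitting $KG\iso K[G/G']\oplus\Delta(G,G')$ to separate fields from quaternion components. Your flagging of $m=2$ as a genuine boundary case is well taken --- the paper's own computation there gives eight fields and \emph{one} quaternion algebra over $F$, so the finite-field counts $8m-12$ and ``two quaternion algebras'' in the theorem really only apply for $m\ge3$, exactly as your remark about the degeneration of the ``six subgroups of order at most $4$'' count suggests.
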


\section{Groups of type $\C{D}_2$}
Let
\begin{equation*}
G=\langle x,y,t\mid t^{2^{m}}=1, x^2=y^2=t, t\text{ central}, (x,y)=s=t^{2^{m-1}}\rangle
\end{equation*}
be a group of type $\C{D}_2$.
Then $G$ has order $4(2^m)$.  If $m=1$,
then $G=Q_8$ and, for any field $K$ of characteristic not $2$,
$KQ_8\iso K\oplus K\oplus K\oplus K\oplus \B{H}$ where
$\B{H}$ is a quaternion  algebra over $K$.

Let $m=2$.  ($G$ is $16\Gamma_2d$ in the Hall and Senior notation.)
Since $\C{Z}(G)=\langle t\rangle = \{1,t,t^2,t^3\}$ and $s=t^2$
as in the previous section,
there are three \B{Q}-classes which are also $K$-classes when $K$ has order $q\equiv3\pmod8$;
otherwise there are four $K$-classes.
Since $x^3=tx$, $x^5=t^2x=sx$ and $x^7=stx$, there is just one \B{Q}-class involving $x$,
$\{x,tx, sx, stx\}$,
and this is also a $K$-class if $K$ has order $q\equiv3\pmod8$.
Since $y^2=t$ too, the situation is identical for $y$.  Since $(xy)^2=xyxy=sxxyy=st^2=1$,
$\{xy,sxy\}$ and $\{txy,stxy\}$ are $K$-classes for any field $K$.
In all, we have  $3+1+1+2=7$ $K$-classes with $K=\B{Q}$
or a finite field of order $q\equiv3\pmod8$.
Here, $G/G'\iso C_2\times C_4$ has six cyclic subgroups and so six \B{Q}-classes.
Since each corresponds to an element of order at most $4$, each \B{Q}-class
is a $K$-class when $K$ has finite order $q\equiv3\pmod8$---see \lemref{lem1}.
It follows that $KG$ is the direct sum of six fields and one quaternion
algebra whether $K=\B{Q}$ or $K$ has order $q\equiv3\pmod8$.  For other finite
fields, there are more than seven simple components in the Wedderburn
decomposition of $KG$.

We now assume that $m\ge3$ and denote by $F$ a finite field of order $q\equiv3\pmod8$.
The centre of $G$ is cyclic of order $2^m$, so there are $N_1=m+1$ \B{Q}-classes
and $M_1=2m-1$ $F$-classes, by \propref{prop2}.

Let $H=\langle \C{Z}(G),x\rangle=\langle x\rangle$, so $H/\langle s\rangle \iso C_{2^m}$
has $m+1$ \B{Q}-classes and $2m-1$ $F$-classes while $\C{Z}(G)/\langle s\rangle\iso C_{2^{m-1}}$
has $m$ \B{Q}-classes and $2(m-1)-1=2m-3$ $F$-classes.  By \lemref{lem1}, there
is just $1=m+1-m$ \B{Q}-class involving $x$---we consistently call this number $N_2$---and
there are $M_2=(2m-1)-(2m-3)=2$ such $F$-classes.
The numbers $N_3,M_3$, respectively, are the same for~$y$.

Let $H=\langle \C{Z}(G),xy\rangle$ and notice that $(xy)^2=sx^2y^2=st^2$
so that $\langle xy\rangle\cap\C{Z}(G)\ne\{1\}$.  Nevertheless,
it is not hard to see that $H/\langle s\rangle\iso C_{2^{m-1}}\times C_2$, so this group
has $2m$ \B{Q}-classes.  Of these, six correspond to subgroups
generated by elements of order
at most $4$ and hence which do not split, while the other $2m-6$ \B{Q}-classes split
into two $F$-classes.  With the help of \lemref{lem1}, we see
that there are $2(2m-6)+6=4m-6$ $F$-classes in $H/\langle s\rangle$.  As shown before,
$\C{Z}(G)/\langle s\rangle$ has $m$ \B{Q}-classes and $2m-3$ $F$-classes so, by \lemref{lem1}, there
are $N_4=2m-m=m$ \B{Q}-classes involving $xy$ and $M_4=(4m-6)-(2m-3)=2m-3$ $F$-classes
involving $xy$.
In all, we have $N=N_1+N_2+N_3+N_4=(m+1)+1+1+m=2m+3$ \B{Q}-classes and
$M=M_1+M_2+M_3+M_4=(2m-1)+2+2+(2m-3)=4m$ $F$-classes.

For groups of type $\C{D}_2$, $G/G'=\langle \overline{x},\overline{y}\rangle$.
As previously, we must be careful, noting that $G/G'\ne\langle\overline{x}\rangle\times\langle\overline{y}\rangle$
(because $\langle\overline{x}\rangle\cap\langle\overline{y}\rangle$).
Nonetheless, it is not hard to see that
$G/G'\iso C_{2^m}\times C_2$ (for instance,
$G/G'=\langle \overline{x}\rangle\times\langle \overline{y^{-1}x}\rangle$).
Thus, for any field $K$ (of characteristic not $2$),
the group algebra $K[G/G']$ is the group algebra of $C_{2^m}$ with
coefficients in $KC_2=K\oplus K$.  By \propref{prop2}, $\B{Q}C_{2^m}$
is the direct sum of $m+1$ fields, so $\B{Q}[G/G']$ is the direct sum
of $N_0=2(m+1)=2m+2$ fields.  Similarly, $F[G/G']$ is the direct sum of $M_0=2(2m-1)=4m-2$ fields.
Since $\B{Q}G$ has just $2m+3$ simple components in all, while $FG$ has $4m+2$,
the following theorem summarizes the work of this section.

\begin{thm}\label{thm:d2} Let $G$ be a group of type $\C{D}_2$ with $m\ge3$.
Then $\B{Q}G$ is the direct sum of $2m+2$ fields and one quaternion algebra.
Let $K$ be a finite field of order $q$.  Then $KG$ has at least $4m+2$
components in its Wedderburn decomposition.  If
$q\equiv3\pmod{8}$, this minimal number is achieved:  In this case,
$KG$ is the direct sum of $4m-2$ fields and two quaternion algebras
(each necessarily a ring of $2\times2$ matrices).
\end{thm}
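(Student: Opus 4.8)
The plan is to combine the Witt-Berman theorem (the number of simple components of $KG$ equals the number of $K$-classes of $G$) with the algebra decomposition $KG\iso K[G/G']\oplus\Delta(G,G')$, in which $K[G/G']$ is a direct sum of fields and $\Delta(G,G')$ a direct sum of quaternion algebras. Everything then reduces to two counts---the number of $K$-classes of $G$ and the number of (field) components of $K[G/G']$---together with a structural remark identifying the noncommutative components when $K$ is finite. I would run the count twice in parallel, for $K=\B{Q}$ and for $K=F$ a finite field of order $q\equiv3\pmod8$, since \lemref{lem1} guarantees that $F$ yields the fewest classes over all odd $q$.

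First I would partition the classes of $G$ into those of the centre and the three families of noncentral conjugacy classes $\{ax,sax\}$, $\{ay,say\}$, $\{axy,saxy\}$ with $a\in\C{Z}(G)$. Since $\C{Z}(G)\iso C_{2^m}$, \propref{prop2} gives $N_1=m+1$ \B{Q}-classes and $M_1=2m-1$ $F$-classes. For the three noncentral families I would invoke \lemref{lem:xclasses}, which expresses each count as the number of $K$-classes of $H/\langle s\rangle$ minus that of $\C{Z}(G)/\langle s\rangle$, where $H=\langle\C{Z}(G),z\rangle$ for $z=x,y,xy$.

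The delicate step---and the one I expect to be the main obstacle---is pinning down the isomorphism type of $H/\langle s\rangle$, because the cyclic subgroups in play overlap and the naive direct-product splitting fails. For $z=x$ (and symmetrically $z=y$) the relation $x^2=t$ makes $H=\langle x\rangle\iso C_{2^{m+1}}$ with $s=x^{2^m}$, so $H/\langle s\rangle\iso C_{2^m}$, and \lemref{lem:xclasses} with \propref{prop2} gives $N_2=N_3=1$, $M_2=M_3=2$. For $z=xy$ one has $(xy)^2=st^2$, so $\langle xy\rangle$ meets $\C{Z}(G)$ nontrivially; here I would certify $H/\langle s\rangle\iso C_{2^{m-1}}\times C_2$ by exhibiting an explicit order-two complement to $\langle\overline t\rangle$---for instance the class of $xy\,t^{-1}$, whose square $(xy)^2t^{-2}=t^{2^{m-1}}=s$ is trivial modulo $s$. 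Then \propref{prop2} and \lemref{lem1} give $N_4=m$, $M_4=2m-3$, and summing yields $N=2m+3$ \B{Q}-classes and $M=4m$ $F$-classes; so $\B{Q}G$ has $2m+3$ and $FG$ has $4m$ simple components, the latter minimal over all odd $q$.

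Finally I would split off the commutative part, again exercising care with $G/G'=\langle\overline x,\overline y\rangle$, where $\langle\overline x\rangle\cap\langle\overline y\rangle\ne\{1\}$: writing $G/G'=\langle\overline x\rangle\times\langle\overline{y^{-1}x}\rangle\iso C_{2^m}\times C_2$ and regarding $K[G/G']$ as $KC_{2^m}$ with coefficients in $KC_2=K\oplus K$, \propref{prop2} yields $N_0=2m+2$ fields over \B{Q} and $M_0=4m-2$ fields over $F$. Subtracting from the totals leaves $(2m+3)-(2m+2)=1$ quaternion component over \B{Q} and $4m-(4m-2)=2$ over $F$. That the two $F$-components are matrix rings, not division algebras, then follows from Wedderburn's theorem that every finite division ring is a field: a four-dimensional noncommutative simple algebra over the finite field $F$ cannot be a division ring, hence each is a ring of $2\times2$ matrices over a field, completing the proof.
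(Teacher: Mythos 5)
Your proposal is correct and follows essentially the same route as the paper: Proposition~\ref{prop2} for the centre, Lemma~\ref{lem:xclasses} for the three noncentral families (with $H/\langle s\rangle\iso C_{2^m}$ for $x$ and $y$, and $\iso C_{2^{m-1}}\times C_2$ for $xy$), the splitting $G/G'=\langle\overline{x}\rangle\times\langle\overline{y^{-1}x}\rangle\iso C_{2^m}\times C_2$ for the commutative part, and subtraction to count the quaternion components. Your explicit order-two complement $\overline{xy\,t^{-1}}$ merely fills in a detail the paper leaves as ``not hard to see''; all counts ($N=2m+3$, $M=4m$, $N_0=2m+2$, $M_0=4m-2$) agree with the paper's.
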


\section{Groups of type $\C{D}_3$}
The following easy lemma is useful in determining the number of cyclic subgroups of
the centre of groups of type $\C{D}_3$
which, remember, is precisely the number of \B{Q}-classes of the
centre.

\begin{lem}\label{lem:2cyclics} Let $A=C_{2^a}\times C_{2^b}$
be the direct product of cyclic groups of orders $2^a$ and $2^b$
and assume $a\ge b\ge1$. Then the number of cyclic subgroups of $A$ of order $2^k$ is
\begin{itemize}
\item $3(2^{k-1})$  if $1\le k\le b$ and
\item $2^{b}$ if $b< k\le a$.
\end{itemize}
In all, $A$ has $N=2^b(3+a-b)-2$ cyclic subgroups (and hence $N$ \B{Q}-classes).
\end{lem}
\begin{proof}
We begin with the observation
that an element $(u,v)\in A$ satisfies $(u,v)^{2^k}=1$ if and only if $u^{2^k}=v^{2^k}=1$
so, for $1\le k\le b$, such an element lies in a subgroup of $A$ isomorphic
to $C_{2^k}\times C_{2^k}$.  In number, there are $n_k=|C_{2^k}\times C_{2^k}|=2^{2k}$
such elements.  Since also $n_{k-1}= 2^{2(k-1)}$,
the number of elements of order $2^k$ in $A$ is $2^{2k}-2^{2(k-1)}=3(2^{2(k-1)})$.  Since
$\phi (2^k)=2^{k-1}$, the number of cyclic subgroups of order $2^k$ is
$3(2^{2(k-1)})/\phi(2^k)=3(2^{k-1})$ as asserted.

If $b<k\le a$, an element $(u,v)\in A$ has order $2^k$
if and only if $u$ has order $2^k$. Since there are $2^{k-1}$ elements of order $2^k$ in
$C_{2^a}$, the number of elements of this order in $A$ is $2^{k-1}2^b$
and the number of cyclic subgroups of this order is $2^{k-1}2^{b}/\phi (2^k)=2^b$.

It follows that the total number of cyclic subgroups of  $A$ is
\begin{equation*}
1 + 3(1+2+ \cdots +2^{b-1}) + 2^{b}(a-b)
     =  2^b(3+a-b)-2. \hspace{12pt}\qed
\end{equation*}
\renewcommand{\qed}{}
\end{proof}
\medskip

In order to expedite many of the calculations
in this paper, we record in \tabref{tab1} some specific instances of this lemma.
In checking this
table, note that \lemref{lem:2cyclics} provides the number of \B{Q}-classes
while \lemref{lem1}, which tells us which of these split into (two) $F$-classes and
which do not, allows us to determine the number of $F$-classes.  Specifically,
if there are $\alpha$ \B{Q}-classes, $\beta$ of which correspond to cyclic groups
generated by an element of order at most $4$, then there are
$\beta+2(\alpha-\beta)=2\alpha-\beta$ $F$-classes.

Now let
\begin{equation*}
G=\langle x,y,t_1,t_2\mid
t^{2^{m_1}}=t^{2^{m_2}}=1, x^2=1, y^2=t_2, (x,y)=s=t_1^{2^{m_1-1}}\rangle
\end{equation*}
be a group from the class $\C{D}_3$.

We first consider four particular small cases.

\smallskip
\noindent (i) If $m_1=m_2=1$, then $|G|=16$, $\C{Z}(G)=C_2\times C_2$ and $s=t_1$.  The centre
has one element of order $1$ and three elements of order $2$, each defining
a conjugacy class of size $1$  and hence, for any field $K$ (of characteristic
different from $2$), a $K$-class: $N_1=4$.
Set $H=\langle \C{Z}(G),x \rangle=\langle t_1,t_2,x\rangle$.
Then $H/\langle s\rangle\iso C_2\times C_2$ has four $K$-classes and
$\langle \C{Z}(G)\rangle/\langle s\rangle=\langle t_2\rangle\iso C_2$
has two $K$-classes.  \lemref{lem:xclasses} says there are $N_2=4-2=2$ classes
involving $x$.  Let $H=\langle\C{Z}(G),y\rangle=\langle t_1,y\rangle$.
The group $H/\langle s\rangle=\langle\overline{y}\rangle\iso C_4$ has
three cyclic subgroups, each generated by an element of order at most $4$.
So there are three \B{Q}-classes and, with a field of order $q\equiv3\pmod8$,
these are also $F$-classes, by \lemref{lem1}.  By \lemref{lem:xclasses},
there is $N_3=3-2=1$ \B{Q}-class involving $y$ and one $F$-class:  $M_3=1$ too.
Since $(xy)^2=sy^2=st_2$, the situation with $xy$ is as with $y$:
$N_4=M_4=1$.  In all, there are $N=N_1+N_2+N_3+N_4=4+2+1+1=8$ \B{Q}-classes
and eight $F$-classes in $G$.  The Wedderburn decomposition of $KG$ has
eight simple components, whether $K=\B{Q}$ or $K=F$.

Now
$G/G'=\langle\overline{t_2},\overline{x},\overline{y}\rangle=\langle\overline{x},\overline{y}\rangle$
since $y^2=t_2$, so $G/G'=\langle \overline{x}\rangle\times \langle\overline{y}\rangle
\iso C_2\times C_4$,
and we can view $K[G/G']$ as the group algebra of $C_4$ with coefficients in $KC_2\iso K\oplus K$,
with $K$ any field (of characteristic not $2$).
With $m=2$ in \propref{prop2},  we see that $KC_4$ is the direct sum of $m+1=2m-1=3$ fields, so
$K[G/G']$ is the direct sum of six fields.
This being the commutative part of $KG$, it now follows
that whether $K=\B{Q}$ or $F$, $KG$ is the direct sum of six fields and two quaternion algebras.

\medskip
\noindent (ii.a)  Suppose $m_1=2$ and $m_2=1$.  Then $|G|=32$, $\C{Z}(G)\iso C_4\times C_2$
and $s=t_1^2$.
Let $K=\B{Q}$ or a finite field of order $q\equiv3\pmod8$.
Since $\C{Z}(G)$ contains six cyclic subgroups, each generated by an element of order at most $4$,
the centre of $G$ contains $N_1=6$ \B{Q}-classes, each of which
is also a $K$-class.

With $H=\langle\C{Z}(G),x\rangle=\langle t_1,t_2,x\rangle$, we have
$H/\langle s\rangle=\langle\overline{t_1},\overline{t_2},\overline{x}\rangle
\iso C_2\times C_2\times C_2$ and
$\C{Z}(G)/\langle s\rangle=\langle\overline{t_1},\overline{t_2}\rangle\iso C_2\times C_2$.
The groups $H/\langle s\rangle$ and $\C{Z}(G)/\langle s\rangle$ have eight and four $K$-classes
respectively so, from \lemref{lem:xclasses}, we see that $G$ has $N_2=8-4=4$
$K$-classes involving $x$.
If we take $H=\langle\C{Z}(G),y\rangle=\langle t_1,y\rangle$, then
$H/\langle s\rangle\iso C_2\times C_4$ has six cyclic subgroups
while $\C{Z}(G)/\langle s\rangle \iso C_2\times C_4$ has f
our.  All subgroups
are generated by elements of order at most $4$ and so correspond to \B{Q}-classes
which do not split when $K$ has order $q\equiv3\pmod8$.
By \lemref{lem:xclasses}, the number of $K$-classes containing $y$ is $N_3=6-4=2$.
Similarly, there are $N_4=2$ $K$-classes containing $xy$, so the total number of $K$-classes is
$N=N_1+N_2+N_3+N_4=6+2+2+2=14$.
Thus, $KG$ is the direct sum of $14$ simple algebras.

Since $G/G'=\langle \overline{t_1},\overline{x},\overline{y}\rangle
\iso C_2\times C_2\times C_4$, we have
$K[G/G']\iso (K\oplus K\oplus K\oplus K)C_4$.  Since $KC_4$ is the
direct sum of three fields, we see that $K[G/G']$ (the commutative part of $KG$) is the direct sum of
$N_0=12$ fields.  We conclude that for $K=\B{Q}$ or $K=F$,
$KG$ is the direct sum of $N_0=12$ fields and $N-N_0=2$ quaternion algebras.

\smallskip\noindent (ii.b)  Suppose  $m_1\ge3$ and $m_2=1$.
Then $\C{Z}(G)=C_{2^{m_1}}\times C_2$ contains $2(m_1+1)$ cyclic subgroups, six
generated by elements of order at most $4$,
so the centre of $G$ contains $N_1=2m_1+2$ \B{Q}-classes.  Those generated by elements
of order at most $4$
are also $F$-classes while the remaining $\B{Q}$-classes each split into two $F$-classes
giving in all, $M_1=6+2(2m_1+2-6)=4m_1-2$ $F$-classes.

Set $H=\langle\C{Z}(G),x\rangle = \langle t_1,t_2,x\rangle$.
Then $H/\langle s \rangle=\langle\overline{t_1},\overline{t_2},\overline{x}\rangle
\iso C_{2^{m_1-1}}\times C_2\times C_2$ contains $4m_1$ cyclic
subgroups twelve generated by elements of order at most $4$.
These are also $F$-classes
while the remaining $4m_1-12$ each split into two $F$-classes.
So we have $4m_1$ \B{Q}-classes
and $12+2(4m_1-12)=8m_1-12$ $F$-classes.  The group $\C{Z}(G)/\langle s\rangle
\iso C_{2^{m_1-1}}\times C_2$ has $2m_1$ cyclic subgroups, six generated by elements of order at
most $4$, so these numbers imply $2m_1$ \B{Q}-classes and $6+2(2m_1-6)=4m_1-6$
$F$-classes.
By \lemref{lem:xclasses}, there are $N_2=4m_1-2m_1=2m_1$ \B{Q}-classes involving $x$
and $M_2=(8m_1-12)-(4m_1-6)=4m_1-6$ $F$-classes.

With $H=\langle \C{Z}(G),y \rangle = \langle t_1,y\rangle$, we have
$H/\langle s\rangle\iso C_{2^{m_1-1}}\times C_4$ with $4[3+(m_1-1)-2]=4m_1-2$
cyclic subgroups (\lemref{lem:2cyclics} or \tabref{tab1}), ten
of which are generated by elements of order at most $4$.
This implies $4m_1-2$ \B{Q}-classes and $10+2(4m_1-2-10)=8m_1-14$
$F$-classes.  The group $\langle \C{Z}(G) \rangle/\langle s\rangle$ was considered above.
We obtain $N_3=(4m_1-2)-2m_1=2m_1-2$ \B{Q}-classes involving $y$
and $M_3=(8m_1-14)-(4m_1-6)=4m_1-8$ $F$-classes.  Since $(xy)^2=st_2$, the number of \B{Q}-classes
and $F$-classes involving $xy$ are the same as for $y$.
In all, we have $N=N_1+N_2+2N_3=(2m_1+2)+2m_1+2(2m_1-2)=8m_1-2$ \B{Q}-classes
and hence $8m_1-2$ simple components
in the Wedderburn decomposition of $\B{Q}G$
and $M=M_1+M_2+2M_3=(4m_1-2)+(4m_1-6)+2(4m_1-8)=16m_1-24$ $F$-classes, hence $16m_1-24$
simple components in the Wedderburn decomposition of $FG$.

Now $G/G'\iso C_{2^{m_1-1}}\times C_2\times C_4$ has $2(4m_1-2)=8m_1-4$ cyclic subgroups,
$20$ generated by elements of order at most $4$,
so the commutative part of $\B{Q}G$ is the direct sum of
$N_0=8m_1-4$ fields, while the commutative part of $FG$ is the direct sum
of $M_0=20+2(8m_1-4-20)=16m_1-28$ fields.
It follows that
$\B{Q}G$ is the direct sum of $N_0=8m_1-4$ fields and $N-N_0=(8m_1-2)-(8m_1-4)=2$ quaternion algebras,
while $FG$ is the direct sum of $M_0=16m_1-28$ fields
and $M-M_0=(16m_1-24)-(16m_1-28)=4$ quaternion algebras.

\medskip
\noindent (iii.a) Suppose $m_1=1$ and $m_2=2$.  Then $|G|=32$, $\C{Z}(G)\iso C_2\times C_4$ and $s=t_1$.
As in case (ii.a), there are $N_1=6$ \B{Q}-classes in the centre and these are also $F$-classes.
($F$ always denotes a finite field of order $q\equiv3\pmod8$.)
Set $H=\langle \C{Z}(G),x \rangle=\langle t_1,t_2,x\rangle$.
Then $H/\langle s\rangle=\langle\overline{t_2},\overline{x}\rangle\iso C_4\times C_2$,
so there are six
\B{Q}-classes which are also $F$-classes.
Since $\langle\C{Z}(G)\rangle/\langle s\rangle=\langle\overline{t_2}\rangle\iso C_4$
contains three, \lemref{lem:xclasses} gives $N_2=M_2=6-3=3$ $K$-classes involving $x$,
whether $K=\B{Q}$ or $K=F$.

With  $H=\langle\C{Z}(G),y\rangle=\langle t_1,y\rangle$,
$H/\langle s\rangle=\langle\overline{y}\rangle\iso C_8$ contains four \B{Q}-classes,
three of which are also $F$-classes, while one splits into two $F$-classes, giving
$2+3=5$ $F$-classes.  Since
$\C{Z}(G)/\langle s\rangle\iso C_4$ contains three \B{Q}-classes which are also $F$-classes,
there is just $N_3=4-3=1$ \B{Q}-class involving $y$
but there are $M_3=5-3=2$ $F$-classes involving $y$.
The calculations are the same for $xy$,
so the total number of \B{Q}-classes of $G$ is $N=N_1+N_2+2N_3=6+3+2(1)=11$ and
the number of $F$-classes is $M=M_1+M_2+2M_3=6+3+2(2)=13$.

Since $G/G'=\langle \overline{x}\rangle\times\langle \overline{y}\rangle\iso C_2\times C_8$,
$K[G/G']$ (for any field $K$ of characteristic different from $2$)
can be viewed as the group algebra of $C_8$ with
coefficients in $KC_2\iso\B{Q}\oplus\B{Q}$.  Since $\B{Q}C_8$ is the
direct sum of four fields (\propref{prop2}), the commutative part
of $\B{Q}G$ is the direct sum of $N_0=8$ fields, so
$\B{Q}G$ is the direct sum of eight fields and $N-N_0=3$
quaternion algebras.  Since
$FC_8$ is the direct sum of five fields (\propref{prop2}),
$F[G/G']$ is the direct sum of $M_0=10$ fields.
It follows that $FG$ is the
direct sum of ten fields and $M-M_0=3$ quaternion algebras,
each necessarily a ring of $2\times2$ matrices over a field.

\smallskip\noindent (iii.b)  Suppose $m_1=1$ and $m_2\ge3$. Then
$\C{Z}(G)=C_2\times C_{2^{m_2}}$ contains $2(m_2+1)=2m_2+2$
cyclic subgroups, six generated by elements of order at most
$4$, so the centre of $G$ contains $N_0=2m_2+2$ \B{Q}-classes
and $M_0=6+2(2m_2+2-6)=4m_2-2$ $F$-classes.

Set $H=\langle \C{Z}(G),x\rangle = \langle t_1,t_2,x\rangle$.
Then $H/\langle s \rangle=\langle\overline{t_2},\overline{x}\rangle \iso C_{2^{m_2}}\times C_2$
contains $2m_2+2$ cyclic
subgroups six generated by elements of order at most $4$.  These numbers imply $2m_2+2$ \B{Q}-classes
and $6+2(2m_2+2-6)=4m_2-2$ $F$-classes.  The group $\C{Z}(G)/\langle s\rangle
\iso C_{2^{m_2}}$ has $m_2+1$ cyclic subgroups, three of which are generated by
elements of order at most $4$, so this implies $m_2+1$ \B{Q}-classes and $3+2(m_2+1-3)=2m_2-1$
$F$-classes.
By \lemref{lem:xclasses}, there are $N_2=(2m_2+2)-(m_2+1)=m_2+1$ \B{Q}-classes involving $x$
and $M_2=(4m_2-2)-(2m_2-1)=2m_2-1$ $F$-classes.

If we take $H=\langle \C{Z}(G),y\rangle=\langle t_1,y\rangle$, then
$H/\langle s\rangle=\langle\overline{y}\rangle\iso C_{2^{m_2+1}}$
has $m_2+2$ cyclic subgroups, three
of which are generated by elements of order at most $4$.
This implies $m_2+2$ \B{Q}-classes and $3+2(m_2+2-3)=2m_2+1$
$F$-classes.  The group $\langle \C{Z}(G)\rangle/\langle s\rangle$ was considered above.
We obtain $N_3=(m_2+2)-(m_2+1)=1$ \B{Q}-class involving $y$
and $M_3=(2m_2+1)-(2m_2-1)=2$ $F$-classes.  Since $(xy)^2=st_2$, the number of \B{Q}-classes
and $F$-classes involving $xy$ are the same as for $y$.
In all, we have $N=N_1+N_2+2N_3=(2m_2+2)+(m_2+1)+2(1)=3m_2+5$ \B{Q}-classes
and hence $3m_2+5$ simple components
in the Wedderburn decomposition of $\B{Q}G$,
and $M=M_1+M_2+2M_3=(4m_2-2)+(2m_2-1)+2(2)=6m_2+1$ $F$-classes, that is, $6m_2+1$
simple components in the Wedderburn decomposition of $FG$.

Now $G/G'=\langle\overline{x},\overline{y}\rangle\iso C_2\times C_{2^{m_2+1}}$
has $2(m_2+2)=2m_2+4$ cyclic subgroups,
six generated by elements of order at most $4$,
so the commutative part of $\B{Q}G$ is the direct sum of
$N_0=2m_2+4$ fields, while the commutative part of $FG$ is the direct sum
of $M_0=6+2(2m_2+4-6)=4m_2+2$ fields.
It follows that
$\B{Q}G$ is the direct sum of $2m_2+4$ fields and $N-N_0=(3m_2+5)-(2m_2+4)=m_2+1$
quaternion algebras,
while $FG$ is the direct sum of $4m_2+2$ fields
and $M-M_0=(6m_2+1)-(4m_2+2)=2m_2-1$ quaternion algebras.

\medskip
\noindent (iv.a) Suppose $m_1=m_2=2$.  Then $|G|=64$ and $\C{Z}(G)\iso C_4\times C_4$
contains $10$ \B{Q}-classes that are also $F$-classes:  $N_1=M_1=10$.

With $H=\langle\C{Z}(G),x\rangle=\langle t_1,t_2,x\rangle$,
the group $H/\langle s\rangle\iso C_2\times C_4\times C_2$ contains $12$ \B{Q}-classes
(which are also $F$-classes)
while $ \C{Z}(G)/\langle s\rangle \iso C_2\times C_4$ contains six \B{Q}-classes
(which are also $F$-classes).  So there are $N_2=12-6=6$ \B{Q}-classes
and $M_2=6$ $F$-classes involving~$x$.

Taking $H=\langle \C{Z}(G),y \rangle=\langle t_1,y\rangle$,
we see that $H/\langle s\rangle\iso C_2\times C_8$ contains eight $\B{Q}$-classes
two of which split into two $F$-classes each, giving $6+2(8-6)=10$ $F$-classes.
There are
six \B{Q}-classes and six $F$-classes in $\C{C}(Z)(G)/\langle s\rangle$
so, by \lemref{lem:xclasses},
the number of $\B{Q}$-classes involving $y$ is $N_3=8-6=2$ and the number of $F$-classes is
$M_3=10-6=4$. The same holds for classes containing $xy$, so the total number of \B{Q}-classes in $G$
is $N=N_1+N_2+2N_3=10+6+2(2)=20$ and the
total number of $F$-classes is $M=M_1+M_2+2M_3=10+6+2(4)=24$.

In the present case,
$G/G'=\langle \overline{x}\rangle\times\langle\overline{y}\rangle\iso C_2\times C_8\times C_2$, so
$K[G/G']$ can be viewed as the group algebra of $C_8$ with
coefficients in $K[C_2\times C_2]\iso K\oplus K\oplus K\oplus K$
with $K$ any field of characteristic different from $2$.
With $m=3$, \propref{prop2} says that $\B{Q}C_8$ is the direct sum of four fields and $FC_8$
the direct sum of $2m-1=5$ fields.  It follows that the commutative part of $\B{Q}G$
is the direct sum of $N_0=16$ fields while the commutative part of $FG$
is the direct sum of $M_0=20$ fields.  We conclude that $\B{Q}G$ is the direct sum of
$16$ fields and $N-N_0=4$ quaternion algebras,  while $FG$ is the
direct sum of $20$ fields and $M-M_0=4$ quaternion algebras.

\smallskip
\noindent (iv.b) Suppose $m_1\ge3$ and $m_2=2$.  Then
$\C{Z}(G)\iso C_{2^{m_1}}\times C_4$ contains $N_1=4m_1+2$ \B{Q}-classes
and $M_1=8m_1-6$ $F$-classes.  (It is helpful to start referring to \tabref{tab1}.)

Let $H=\langle \C{Z}(G),x\rangle=\langle t_1,t_2,x\rangle$.  Then $H/\langle s\rangle
\iso C_{2^{m_1-1}}\times C_4\times C_2$ has $8(m_1-1)+4=8m_1-4$ \B{Q}-classes
and $16(m_1-1)-12=16m_1-28$ $F$-classes,
while $\C{Z}(G)/\langle s\rangle\iso C_{2^{m_1-1}}\times C_4$ has $4(m_1-1)+2=4m_1-2$
and $8(m_1-1)-6=8m_1-14$, respectively.  \lemref{lem:xclasses} gives
$N_2=(8m_1-4)-(4m_1-2)=4m_1-2$ \B{Q}-classes involving $x$ and
$M_2=(16m_1-28)-(8m_1-14)=8m_1-14$ $F$-classes.

Let $H=\langle \C{Z}(G),y\rangle=\langle t_1,y\rangle$.
Then $H/\langle s\rangle\iso C_{2^{m_1-1}}\times C_8$
has $8(m_1-1)-2=8m_1-10$ \B{Q}-classes and $16(m_1-1)-14=16m_1-30$ $F$-classes, giving
$N_3=(8m_1-10)-(4m_1-2)=4m_1-8$ \B{Q}-classes involving $y$ and
$M_3=(16m_1-30)-(8m_1-14)=8m_1-16$
$F$-classes.  Since $(xy)^2=st_2$, $\langle \C{Z}(G),xy\rangle=\langle t_1,xy\rangle$,
so the number of \B{Q}- and $F$-classes involving $xy$ are the same as those
involving $y$.  In all, we have $N=N_1+N_2+2N_3=16m_1-16$ \B{Q}-classes in $G$
and $M=M_1+M_2+2M_3=32m_1-52$ $F$-classes.

Now $G/G'=\langle \overline{t_1},\overline{x},\overline{y}\rangle
\iso C_{2^{m_1}}\times C_2\times C_8$
has $N_0=16(m_1-1)-4=16m_1-20$ \B{Q}-classes and
$M_0=32(m_1-1)-28=32m_1-60$ $F$-classes.  We conclude
that $\B{Q}G$ is the direct sum of $N_0=16m_1-20$ fields and $N-N_0=4$
quaternion algebras, while $FG$ is the direct sum of $M_0=32m_1-60$
fields and $M-M_0=8$ quaternion algebras.

\smallskip
\noindent (iv.c) Suppose $m_1=2$ and $m_2\ge3$.  Then
$\C{Z}(G)\iso C_4\times C_{2^{m_2}}$ contains $N_1=4m_2+2$ \B{Q}-classes
and $M_1=8m_2-6$ $F$-classes.

Let $H=\langle \C{Z}(G),x\rangle=\langle t_1,t_2,x\rangle$.  Then $H/\langle s\rangle
\iso C_4\times C_{2^{m_2}}\times C_2$ has $8m_2+4$ \B{Q}-classes and $16m_2-12$ $F$-classes
while $\C{Z}(G)/\langle s\rangle\iso C_2\times C_{2^{m_2}}$ has $2m_2+2$
and $4m_2-2$, respectively.  \lemref{lem:xclasses} gives
$N_2=(8m_2+4)-(2m_2+2)=6m_2+2$ \B{Q}-classes involving $x$ and
$M_2=(16m_2-12)-(4m_2-2)=12m_2-10$ $F$-classes.

Let $H=\langle \C{Z}(G),y\rangle=\langle t_1,y\rangle$.
Then $H/\langle s\rangle\iso C_2\times C_{2^{m_2+1}}$
has $2m_2+4$ \B{Q}-classes and $4m_2+2$ $F$-classes, giving
$N_3=(2m_2+4)-(2m_2+2)=2$ \B{Q}-classes involving $y$ and $M_3=(4m_2+2)-(4m_2-2)=4$
$F$-classes.  Since $(xy)^2=st_2$, $\langle \C{Z}(G),xy\rangle=\langle t_1,xy\rangle$,
so the number of \B{Q}- and $F$-classes involving $xy$ are the same as those
involving $y$.  In all, we have $N=N_1+N_2+2N_3=10m_2+8$ \B{Q}-classes in $G$
and $M=M_1+M_2+2M_3=20m_2-8$ $F$-classes.

Now $G/G'=\langle \overline{t_1},\overline{x},\overline{y}\rangle
\iso C_2\times C_2\times C_{2^{m_2+1}}$
has $N_0=4m_2+8$ \B{Q}-classes and $M_0=8m_2+4$ $F$-classes.  We conclude
that $\B{Q}G$ is the direct sum of $N_0=4m_2+8$ fields and $N-N_0=6m_2$
quaternion algebras, while $FG$ is the direct sum of $M_0=8m_2+4$
fields and $M-M_0=12m_2-12$ quaternion algebras.

\medskip
We turn to the general case, assuming now that $m_1\ge 3$ and $m_2\ge 3$.
Again, our argument involves subcases.

\medskip
\noindent (a) Suppose first that $m_1-1\ge m_2+1$.
\lemref{lem:2cyclics} tells us the $\C{Z}(G)\iso C_{2^{m_1}}\times C_{2^{m_2}}$
has $N_1=2^{m_2}(3+m_1-m_2)-2$ cyclic subgroups, which is the number of
\B{Q}-classes.  Those ten subgroups which lie in subgroups isomorphic to $C_4\times C_4$
are also $F$-classes, while the others split into two $F$-classes---see \lemref{lem1}.
We obtain $M_1=10+2(N_1-10)=2N_1-10$ $F$-classes in the centre.
\par\smallskip
Since $m_1-1>m_2$, \lemref{lem:2cyclics} tells us
that $\C{Z}(G)/\langle s\rangle \iso C_{2^{m_1-1}}\times C_{2^{m_2}}$ has
$\alpha=2^{m_2}(3+m_1-1-m_2)-2=2^{m_2}(2+m_1-m_2)-2$ cyclic subgroups,
of which $10$ (those lying in a copy of $C_4\times C_4$) are generated
by elements of order at most $4$.  It follows that the number
of \B{Q}-classes in $\C{Z}(G)/\langle s\rangle$ is $\alpha$ and the number
of $F$-classes is $10+2(\alpha-10)=2\alpha-10$.
Let $H=\langle \C{Z}(G),x \rangle=\langle t_1,t_2,x\rangle$. Then
$H/\langle s\rangle=\langle \overline{t_1}, \overline{t_2}, \overline{x}\rangle
\iso C_{2^{m_1-1}}\times C_{2^{m_2}}\times C_2$ has $2\alpha$ cyclic subgroups,
twenty of which are generated by elements of order at most $4$.  So $H/\langle s\rangle$
has $2\alpha$ \B{Q}-classes and $20+2(2\alpha-20)=4\alpha-20$ $F$-classes.
Using \lemref{lem:xclasses}, we obtain $N_2=2\alpha-\alpha=\alpha$ \B{Q}-classes
and $M_2=(4\alpha-20)-(2\alpha-10)=2\alpha-10=2N_2-10$ $F$-classes.

Set $H=\langle\C{Z}(G),y\rangle=\langle t_1,y\rangle\iso C_{2^{m_1}}\times C_{2^{m_2+1}}$.
Now $H/\langle s\rangle
\iso C_{2^{m_1-1}}\times C_{2^{m_2+1}}$.  Using \lemref{lem:2cyclics} and noting
$m_1-1\ge m_2+1$, we see that $H/\langle s\rangle$ has
$\beta=2^{m_2+1}[3+(m_1-1)-(m_2+1)]-2=2^{m_2+1}(1+m_1-m_2)-2$
cyclic subgroups, ten of which are generated by elements of order at most $4$.
This gives $\beta$ \B{Q}-classes and $10+2(\beta-10)=2\beta-10$ $F$-classes.
Since $\C{Z}/\langle s\rangle$ has $\alpha$ cyclic subgroups (as above),
\lemref{lem:xclasses} gives $N_3=\beta-\alpha=2^{m_2}(m_1-m_2)$ \B{Q}-classes
involving $y$ and $M_3=(2\beta-10)-(2\alpha-10)=2(\beta-\alpha)=2N_3$ $F$-classes.

Since $(xy)^2=st_2$, the counts for $xy$ are the same as for $y$:  $N_4=N_3$
and $M_4=M_3$.  In all, the total number of \B{Q}-classes in $G$,
which is the number of simple components
in the Wedderburn decomposition of $\B{Q}G$, is $N=N_1+N_2+2N_3=2^{m_2}(5+4m_1-4m_2)-4$.
Similarly, the Wedderburn decomposition of $FG$ has $M=M_1+M_2+2M_3=(2N_1-10)+(2N_2-10)+
2(2N_3)=2N-20=2^{m_2+1}(5+4m_1-4m_2)-28$ simple components.

Now $G/G'=\langle \overline{t_1},\overline{x},\overline{y}\rangle
\iso C_{2^{m_1-1}}\times C_2\times C_{2^{m_2+1}}$ has $2\beta$ cyclic subgroups,
twenty of which are generated by elements of order at most $4$.  This gives
$N_0=2\beta$ \B{Q}-classes and $M_0=20+2(2\beta-20)=4\beta-20=2N_0-20$ $F$-classes.
Remember that these are the number of fields in the commutative
parts of $\B{Q}G$ and $FG$, respectively.  It follows that $\B{Q}G$ is the direct sum
of $N_0=2\beta=2^{m_2+2}(1+m_1-m_2)-4$ fields and $N-N_0=2^{m_2}$ quaternion algebras
and $FG$ is the direct sum of $M_0=4\beta-20=2^{m_2+3}(1+m_1-m_2)-28$ fields
and $M-M_0=2^{m_2+1}$ quaternion algebras.

\smallskip
\noindent (b) Now suppose $m_1-1< m_2+1$.  We consider two possibilities.
\smallskip

\noindent (b.i) First, suppose that $m_1-1=m_2$.
The centre of $G$, which is isomorphic to $C_{2^{m_2+1}}\times C_{2^{m_2}}$,
has $\alpha=2^{m_2}(3+m_2+1-m_2)-2=4(2^{m_2})-2$ cyclic
subgroups, of which ten are generated by elements of order at most $4$.
It follows that the centre has $N_1=4(2^{m_2})-2$ \B{Q}-classes
and $M_1=10+2(N_1-10)=2N_1-10$ $F$-classes.

The group $\C{Z}(G)/\langle s\rangle\iso C_{2^{m_2}}\times C_{2^{m_2}}$ has
$\beta=2^{m_2}(3)-2$ cyclic subgroups, of which ten are
generated by elements of order at most $4$, so the number of \B{Q}-classes
in the centre is $\beta$ and the number of $F$-classes is
$10+2(\beta-10)=2\beta-10$.
Let $H=\langle \C{Z}(G),x\rangle =\langle t_1,t_2,x\rangle$.
Then $H/\langle s\rangle\iso C_{2^{m_2}}\times C_{2^{m_2}}\times C_2$
has $2\beta$ cyclic subgroups, of which twenty are generated by elements
of order at most $4$, so $H/\langle s\rangle$ has $2\beta$ \B{Q}-classes
and $20+2(2\beta-20)=4\beta-20$ $F$-classes.   From \lemref{lem:xclasses},
we learn that there are $N_2=2\beta-\beta=\beta=2^{m_2}(3)-2$
\B{Q}-classes involving $x$ and $M_2=(4\beta-20)-(2\beta-10)=2\beta-10=2N_2-10$
$F$-classes.

Let $H=\langle\C{Z}(G),y\rangle=\langle t_1,y\rangle$.
Then $H/\langle s\rangle\iso C_{2^{m_2}}\times C_{2^{m_2+1}}$ has
$\gamma=2^{m_2}(3+1)-2=2^{m_2}(4)-2$ cyclic subgroups, of which ten
are generated by elements of order $4$.  It follows that this group
has $\gamma$ \B{Q}-classes and $2\gamma-10$ $F$-classes.   \lemref{lem:xclasses}
gives $N_3=\gamma-\beta=2^{m_2}$ \B{Q}-classes involving $y$
and $M_3=(2\gamma-10)-(2\beta-10)=2(\gamma-\beta)=2N_3$ $F$-classes.
Since $(xy)^2=st_2$, the counts with $xy$ are the same as with $y$:
$N_4=N_3$ and $M_4=M_3$.
The rational group algebra $\B{Q}G$ is the direct sum of
$N=N_1+N_2+2N_3=2^{m_2}(9)-4$ simple algebras and the group algebra
$FG$ is the direct sum of $M=M_1+M_2+2M_3=2N-20=2^{m_2+1}(9)-28$ simple algebras.
In this situation, $G/G'\iso C_{2^{m_2}}\times C_2\times C_{2^{m_2+1}}$
has $N_0=2\gamma=2^{m_2}(8)-4$ \B{Q}-classes
and $M_0=20+2(2\gamma-20)=4\gamma-20=2N_0-20=2^{m_2+2}(4)-28$ $F$-classes.
It follows that $\B{Q}G$ is the direct sum of $N_0=2\gamma=2^{m_2}(8)-4$
fields and $N-N_0=2^{m_2}$ quaternion algebras, while $FG$ is
the direct sum of $M=2^{m_2+1}$ quaternion algebras.

\medskip
\noindent (b.ii)
The remaining case is $m_1-1<m_2$, that is, $m_1\le m_2$.  Here,
\lemref{lem:2cyclics} gives the number of cyclic subgroups of
$\C{Z}(G)\iso C_{2^{m_1}}\times C_{2^{m_2}}$
to be $N_1=2^{m_1}(3+m_2-m_1)-2$.  Since ten of these are generated by
elements of order at most $4$, the centre has $N_1$ \B{Q}-classes
and $M_1=2N_1-10$ $F$-classes.

Since $\C{Z}(G)/\langle s\rangle\iso C_{2^{m_1-1}}\times C_{2^{m_2}}$ has
$\alpha=2^{m_1-1}[3+m_2-(m_1-1)]-2=2^{m_1-1}(4+m_2-m_1)-2$ cyclic subgroups,
of which ten are generated by elements of order at most $4$,  this group
has $\alpha$ \B{Q}-classes and $2\alpha-10$ $F$-classes.
Let $H=\langle \C{Z}(G),x\rangle$.  We have
$H/\langle s\rangle\iso C_{2^{m_1-1}}\times C_{2^{m_2}}\times C_2$
with $2\alpha$ cyclic subgroups, of which twenty are generated by elements
of order at most $4$.  These facts imply $2\alpha$ \B{Q}-classes and
$4\alpha-20$ $F$-classes.  \lemref{lem:xclasses} gives $N_2=2\alpha-\alpha
=\alpha=2^{m_1-1}(4+m_2-m_1)-2$ \B{Q}-classes and $M_2=(4\alpha-20)-(2\alpha-10)
=2\alpha-10=2N_2-10$ $F$-classes involving~$x$.

Let $H=\langle\C{Z}(G),y\rangle=\langle t_1,y\rangle$.  Then $H/\langle s\rangle
\iso C_{2^{m_1-1}}\times C_{2^{m_2+1}}$ has $\beta
=2^{m_1-1}[3+(m_2+1)-(m_1-1)]-2=2^{m_1-1}(5+m_2-m_1)-2$
cyclic subgroups, of which $10$ are generated by elements of order
at most $4$.  So this group has $\beta$ \B{Q}-classes and $2\beta-10$
$F$-classes.  \lemref{lem:xclasses} gives $N_3=\beta-\alpha=2^{m_1}-1$
\B{Q}-classes involving $y$ and $M_3=(2\beta-10)-(2\alpha-10)=2(\beta-\alpha)
=2N_3$ $F$-classes. As before, the numbers for $xy$ are the same,
so $\B{Q}G$ is the direct sum of $N=N_1+N_2+2N_3=2^{m_1-1}(12+3m_2-3m_1)-4$
simple algebras and $FG$ is the direct sum of
$M=M_1+M_2+2M_3=2N-20=2^{m_1}(12+3m_2-3m_1)-28$
simple algebras.
Since $G/G'\iso C_{2^{m_1-1}}\times C_2\times C_{2^{m_2+1}}$ and $2\beta$
cyclic subgroups, of which twenty are generated by elements of order
at most $4$, the number of \B{Q}-classes here is $N_0=2\beta=
2^{m_1}(5+m_2-m_1)-4$ and the number of $F$-classes is $M_0=4\beta-20=2N_0-20
=2^{m_1+1}(5+m_2-m_1)-28$.
It follows that $\B{Q}G$ is the direct sum of $N_0=2^{m_1}(5+m_2-m_1)-4$
fields and $N-N_0=2^{m_1-1}(2+m_2-m_1)$ quaternion algebras,
while $FG$ is the direct sum of $M_0=2^{m_1+1}(5+m_2-m_1)-28$ fields
and $M-M_0=2^{m_1}(2+m_2-m_1)$ quaternion algebras.

\begin{thm} Let $G$ be a group of type $\C{D}_3$  with $m_1\ge3$ and $m_2\ge3$.
\smallskip\par\noindent Set $N=\begin{cases}
    2^{m_2}(5+4m_1-4m_2)-4 & \text{if $m_1-1\ge m_2+1$} \\
    9(2^{m_2})-4 & \text{if $m_2=m_1-1$} \\
    2^{m_1-1}(12+3m_2-3m_1)-4 & \text{if $m_1\le m_2$,}
    \end{cases}$
\smallskip\par\noindent $N_0=\begin{cases}
            2^{m_2+2}(1+m_1-m_2)-4 & \text{if $m_1-1\ge m_2+1$} \\
            8(2^{m_2}) -4          & \text{if $m_2=m_1-1$} \\
            2^{m_1}(5+m_2-m_1)-4   & \text{if $m_1\le m_2$,}
            \end{cases}$
\bigskip\par\noindent $M=2N-20$ and $M_0=2N_0-20$.  Then $\B{Q}G$ is the direct sum of
$N_0$ fields and $N-N_0$ quaternion algebras.  For any finite field $K$ of
odd order $q$, $KG$ is the direct
sum of at least $M_0$ fields and $M-M_0$ quaternion algebras, these numbers being
achieved if $q\equiv3\pmod8$.
\end{thm}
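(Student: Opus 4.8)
The plan is to treat the three displayed formulas for $N$ (and $N_0$) not as three separate theorems but as the three regimes into which the counting machinery of this section naturally falls, and to derive the uniform relations $M=2N-20$, $M_0=2N_0-20$ together with the structural conclusion in a single pass. The backbone is the Witt--Berman count: the number of simple components of $KG$ equals the number of $K$-classes of $G$, and the general decomposition $KG\iso K[G/G']\oplus\Delta(G,G')$ recorded earlier splits these components into the $N_0$ (resp.\ $M_0$) fields coming from the commutative part $K[G/G']$ and the remaining $N-N_0$ (resp.\ $M-M_0$) quaternion algebras filling $\Delta(G,G')$. So it suffices to count $\B{Q}$- and $F$-classes of $G$ and of $G/G'$ separately.

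First I would organize the conjugacy classes of $G$ into the central ones, contributing $N_1$ (resp.\ $M_1$), and the three families of noncentral classes involving $x$, $y$, and $xy$, contributing $N_2,N_3,N_4$ (resp.\ $M_2,M_3,M_4$); since $(xy)^2=st_2$ puts $xy$ on the same footing as $y$, we always have $N_4=N_3$ and $M_4=M_3$, so $N=N_1+N_2+2N_3$ and likewise for $M$. Each individual count is obtained by the same two-step recipe: use \lemref{lem:2cyclics} to count the cyclic subgroups---equivalently the $\B{Q}$-classes---of the relevant abelian $2$-group (namely $\C{Z}(G)$, and $H/\langle s\rangle$ together with $\C{Z}(G)/\langle s\rangle$ for $H=\langle\C{Z}(G),x\rangle$, and similarly for $y$), and then invoke \lemref{lem1} to pass to $F$-classes, noting that precisely the subgroups generated by an element of order at most $4$ survive intact while all others split into two. \lemref{lem:xclasses} then extracts the classes involving a given generator as a difference of two such counts.

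The case division is forced at exactly one point. When $H=\langle\C{Z}(G),y\rangle\iso C_{2^{m_1}}\times C_{2^{m_2+1}}$ and we pass to $H/\langle s\rangle\iso C_{2^{m_1-1}}\times C_{2^{m_2+1}}$, the formula of \lemref{lem:2cyclics} depends on whether $m_1-1\ge m_2+1$, $m_1-1=m_2$, or $m_1-1<m_2$, which is exactly the trichotomy $m_1\ge m_2+2$, $m_1=m_2+1$, $m_1\le m_2$ of the statement. I expect this bookkeeping---tracking in each regime which cyclic subgroups of each quotient are generated by an element of order at most $4$---to be the main obstacle, since the arithmetic is delicate though entirely mechanical. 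The uniform relations $M=2N-20$ and $M_0=2N_0-20$ then fall out automatically: the central and $x$-counts each carry a $-10$ correction ($M_1=2N_1-10$, $M_2=2N_2-10$) because ten $\B{Q}$-classes of order at most $4$ do not split, whereas the $y$- and $xy$-counts satisfy $M_3=2N_3$ exactly because every subgroup of order at most $4$ in $H/\langle s\rangle$ already lies in $\C{Z}(G)/\langle s\rangle$, so the corrections cancel in the difference produced by \lemref{lem:xclasses}; summing gives $M=2(N_1+N_2+2N_3)-20=2N-20$, and the analogous telescoping for $G/G'$ (which carries twenty non-splitting classes) gives $M_0=2N_0-20$.

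Finally I would close the structural claim. The $N_0$ (resp.\ $M_0$) classes of $G/G'$ account precisely for the fields in the commutative part, so the surplus $N-N_0$ (resp.\ $M-M_0$) simple components lie in $\Delta(G,G')$ and are quaternion algebras by the recalled decomposition. Over a finite field $K$ there are no noncommutative finite division rings, so each such component must be the split algebra $M_2(K)$. The passage from the minimal count at $q\equiv3\pmod8$ to the ``at least'' statement for general odd $q$ is immediate from \lemref{lem1}, since every other odd residue class forces at least as many $\B{Q}$-classes to split, hence at least $M$ simple components in all.
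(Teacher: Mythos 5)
Your proposal follows essentially the same route as the paper's own argument: the decomposition $N=N_1+N_2+2N_3$ with $N_4=N_3$ (via $(xy)^2=st_2$), the counts obtained from \lemref{lem:2cyclics} and \lemref{lem1} and differenced by \lemref{lem:xclasses}, the trichotomy arising from applying \lemref{lem:2cyclics} to $C_{2^{m_1-1}}\times C_{2^{m_2+1}}$, and the cancellation of the order-$\le4$ corrections yielding $M=2N-20$ and $M_0=2N_0-20$ are all exactly the paper's computation, organized slightly more uniformly. The only quibble is that the case split is needed not just for $\langle\C{Z}(G),y\rangle/\langle s\rangle$ but also for $\C{Z}(G)$, $\C{Z}(G)/\langle s\rangle$ and $G/G'$; this does not affect the correctness of the plan.
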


\section{Groups of type $\C{D}_4$}
Let
\begin{multline*}
G=\langle t_1,t_2,x,y\mid t_1^{2^{m_1}}=t_2^{2^{m_2}}=1, \\
x^2=t_1, y^2=t_2, t_1,t_2 \text{ central}, (x,y)=t_1^{2^{m_1-1}}=s\rangle
\end{multline*}
be a group of type $\C{D}_4$.

Again, we consider first several small cases
and in the first few, let $K$
denote ambiguously \B{Q} or a finite field of order $q\equiv3\pmod8$.

\medskip
\noindent (i) If $m_1=m_2=1$, then  $\C{Z}(G)=C_2\times C_2$ and $s=t_1$.  The centre
has one element of order $1$ and three elements of order $2$, each defining
a conjugacy class of size $1$ and hence a $K$-class.
Set $H=\langle \C{Z}(G),x \rangle \iso C_4\times C_2$.
Then $H/\langle s\rangle\iso C_2\times C_2$ has four $K$-classes and
$\langle \C{Z}(G),x \rangle/\langle s\rangle \iso C_2$ has two $K$-classes,
so there are two classes involving $x$.
A similar calculation shows that there are also two classes involving $y$ and
another
two involving $xy$, so $KG$ is the direct sum of eight simple algebras.
Now $G/G'\iso C_2\times C_4$,
so $K[G/G']$ is the group algebra of $C_4$ with coefficients in $KC_2\iso K\oplus K$
and is, therefore, the direct sum of six fields.
Consequently,  $KG$ is the direct sum of six fields and two quaternion algebras.

\medskip
\noindent (ii.a)  Suppose $m_1=2$ and $m_2=1$.  Then $|G|=32$, $s=t_1^2$
and $\C{Z}(G)\iso C_4\times C_2$.
Since $\C{Z}(G)$ contains six cyclic subgroups, each of order at most $4$,
the centre of $G$ contains six $K$-classes.

Set $H=\langle \C{Z}(G),x \rangle=\langle x,t_2\rangle$.
Then $H/\langle s\rangle\iso C_4\times C_2$.
This group has six cyclic subgroups and
$\C{Z}(G)/\langle s\rangle\iso C_2\times C_2$ has four, so
$G$ has two $K$-classes involving $x$.
Let $H=\langle \C{Z}(G),y\rangle=\langle t_1,y\rangle$.
Then $H/\langle s\rangle\iso C_2\times C_4$  has
six cyclic subgroups while $\C{Z}(G)/\langle s\rangle$ has
four, so the number of $K$-classes involving $y$ is two.
Now $(xy)^2=st_1t_2=t_1^3t_2$, so
$H=\langle \C{Z}(G),xy\rangle=\langle t_2,xy\rangle$,
so $H/\langle s\rangle=\langle\overline{t_2},\overline{xy}\rangle\iso C_2\times C_4$
has six, so there are two $K$-classes involving $xy$.
In all, we have $12$ $K$-classes in $G$, so $KG$ has $12$ simple components.

Since $G/G'=\langle\overline{x},\overline{y}\rangle\iso C_2 \times C_4$
has six cyclic subgroups, all of order at most $4$,
$K[G/G']$  has six $K$-classes,
so $KG$ is the direct sum of six fields and six
quaternion algebras.

\smallskip
\noindent (ii.b) Suppose $m_1\ge3$ and $m_2=1$.
Then $\C{Z}(G)=C_{2^{m_1}}\times C_2$ contains $2(m_1+1)$ cyclic subgroups, six of
which are generated by elements of order at most $4$,
so the centre of $G$ contains $N_1=2m_1+2$ \B{Q}-classes.  Those
of order at most $4$
are also $F$-classes while the remaining $\B{Q}$-classes each split into
two $F$-classes
giving in all, $M_1=6+2(2m_1+2-6)=4m_1-2$ $F$-classes.

Set $H=\langle \C{Z}(G),x \rangle = \langle t_2,x\rangle$.
Then $H/\langle s \rangle=\langle\overline{t_2},\overline{x}\rangle
\iso C_2\times C_{2^{m_1}}$ contains $2[3+(m_1-1)]-2=2m_1+2$ cyclic
subgroups, six of which are generated by elements
of order at most $4$.  These numbers imply $2m_1+2$ \B{Q}-classes
and $6+2(2m_1+2-6)=4m_1-2$ $F$-classes.  The group $\C{Z}(G)/\langle s\rangle
\iso C_{2^{m_1-1}}\times C_2$ has $2m_1$ cyclic subgroups, six of which are
generated by elements of order at
most $4$, so this implies $2m_1$ \B{Q}-classes and
$6+2(2m_1-6)=4m_1-6$ $F$-classes involving $x$.
By \lemref{lem:xclasses}, there are $N_2=(2m_1+2)-2m_1=2$ \B{Q}-classes involving $x$
and $M_2=(4m_1-2)-(4m_1-6)=4$ $F$-classes.

If we take $H=\langle \C{Z}(G),y \rangle=\langle t_1,y\rangle$ then
$H/\langle s\rangle\iso C_{2^{m_1-1}}\times C_4$ has $4(3+m_1-1-2)-2=4m_1-2$ cyclic subgroups, ten
of which are generated by elements of order at most $4$.  This implies $4m_1-2$ \B{Q}-classes and
$10+2(4m_1-2-10)=8m_1-14$ $F$-classes involving $y$.
The group $\C{Z}(G)/\langle s\rangle$ was considered above.
We obtain $N_3=(4m_1-2)-2m_1=2m_1-2$ \B{Q}-classes involving $y$
and $M_3=(8m_1-14)-(4m_1-6)=4m_1-8$ $F$-classes.

Now $(xy)^2=st_1t_2$, so $H=\langle \C{Z}(G),xy\rangle=\langle t_2,xy\rangle$.
Thus $H/\langle s\rangle\iso C_2\times C_{2^{m_1}}$
has $2[(3+(m_1-1)]-2=2m_1+2$ \B{Q}-classes and $2(2m_1+2-6)+6=4m_1-2$ $F$-classes.
In all, this gives $N_4=(2m_1+2)-2m_1=2$ \B{Q}-classes involving $xy$
and $M_4=(4m_1-2)-(4m_1-6)=4$ $F$-classes.  Thus $\B{Q}G$ has $N=N_1+N_2+N_3+N_4=4m_1+4$
simple components and $FG$ has $M=M_1+M_2+M_3+M_4=8m_1-2$ simple components.

Now $G/G'=\langle \overline{x},\overline{y}\rangle \iso C_{2^{m_1}}\times C_4$
has $N_0=4[3+(m_1-2)]-2=4m_1+2$ cyclic subgroups,
$10$ of order at most $4$, so the commutative part of $\B{Q}G$ is the direct sum of
$4m_1+2$ fields, while the commutative part of $FG$ is the direct sum
of $M_0=2(4m_1+2-10)+10=8m_1-6$ fields.
It follows that
$\B{Q}G$ is the direct sum of $N_0=4m_1+2$ fields and $N-N_0=(4m_1+4)-(4m_1+2)=2$
quaternion algebras,
while $FG$ is the direct sum of $M_0=8m_1-6$ fields
and $M-M_0=(8m_1-2)-(8m_1-6)=4$ quaternion algebras.

\medskip
\noindent (iii.a) Suppose $m_1=1$ and $m_2=2$.
Then $|G|=32$, $\C{Z}(G)\iso C_2\times C_4$ and $s=t_1$.
As in case (ii.a), there are six \B{Q}-classes of the centre
and these are also $F$-classes.

Now $\langle \C{Z}(G),x \rangle/\langle s\rangle \iso C_2\times C_4$
contains six  \B{Q}-classes while $\C{Z}(G)/\langle s\rangle \iso C_4$
contains three, so the number of $K$-classes containing $x$ is three.
The group $\langle \C{Z}(G),y\rangle/\langle s\rangle\iso  C_8$ contains
four  \B{Q}-classes and $\langle \C{Z}(G)\rangle/\langle s\rangle \iso C_4$
contains three,  so there is just one \B{Q}-class in $G$ containing $y$.
As one of the \B{Q}-classes of $H/\langle s \rangle$
(corresponding to the elements of order $8$)
splits into two $F$-classes
and the \B{Q}-classes of $\langle \C{Z}(G) \rangle/\langle s\rangle$ do not split,
the number of $F$-classes containing $y$ is two.
A similar calculations holds for $xy$, so there are also one \B{Q}-class and
two $F$-classes containing $xy$.
All this shows that the total number of \B{Q}-classes of $G$ is $11$
and the total number of $F$-classes is $13$.

Since $G/G'=\langle \overline{x}\rangle\times\langle \overline{y}\rangle
\iso C_2\times C_8$, $K[G/G']=KC_2[C_8]\iso (K\oplus K)C_8\iso KC_8\oplus KC_8$
is the direct sum of $2(4)=8$ fields if $K=\B{Q}$ and
$2(5)=10$ fields if $K$ has finite order $q\equiv3\pmod8$.  It follows that
\B{Q}G is the direct sum of eight fields and three
quaternion algebras while $FG$ is the
direct sum of ten fields and three quaternion algebras,
each necessarily a ring of $2\times2$ matrices over a field.

\medskip
\noindent (iii.b) Suppose $m_1=1$ (so that $s=t_1$) and $m_2\ge3$.
Then $\C{Z}(G)=C_2\times C_{2^{m_2}}$ contains $2(m_2+1)$ cyclic subgroups, six of order at most $4$,
so the centre of $G$ contains $N_1=2m_2+2$ \B{Q}-classes.  Those of size at most $4$
are also $F$-classes while the remaining $\B{Q}$-classes each split into two $F$-classes
giving in all, $M_1=2(2m_2+2-6)+6=4m_2-2$ $F$-classes.

Set $H=\langle \C{Z}(G),x \rangle = \langle t_2,x\rangle$.
Then $H/\langle s \rangle=\langle\overline{t_2},\overline{x}\rangle
\iso C_{2^{m_2}}\times C_2$ contains $2m_2+2$ cyclic
subgroups, six of which are generated by elements
of order at most $4$.  These numbers imply $2m_2+2$ \B{Q}-classes
and $2(2m_2+2-6)+6=4m_2-2$ $F$-classes.  The group $\C{Z}(G)/\langle s\rangle
\iso C_{2^{m_2}}$ has $m_2+1$ cyclic subgroups, three of which are generated
by elements of order at
most $4$, so this implies $m_2+1$ \B{Q}-classes and
$2(m_2+1-3)+3=2m_2-1$ $F$-classes involving $x$.
By \lemref{lem:xclasses}, there are $N_2=(2m_2+2)-(m_2+1)=m_2+1$ \B{Q}-classes involving $x$
and $M_2=(4m_2-2)-(2m_2-1)=2m_2-1$ $F$-classes.

With $H=\langle \C{Z}(G),y\rangle=\langle t_1,y\rangle$,
$H/\langle s\rangle\iso C_{2^{m_2+1}}$ has $m_2+2$ cyclic subgroups, three
of which are generated by elements of order at most $4$.
This implies $m_2+2$ \B{Q}-classes and $2(m_2+2-3)+3=2m_2+1$
$F$-classes involving $y$.
The group $\C{Z}(G)/\langle s\rangle$ was considered above.
We obtain $N_3=(m_2+2)-(m_2+1)=1$ \B{Q}-class involving $y$
and $M_3=(2m_2+1)-(2m_2-1)=2$ $F$-classes.
Since $(xy)^2=st_1t_2=t_2$, the counts for $xy$ and $y$ are the same.
In all, the Wedderburn decomposition of $\B{Q}G$ has $N=N_1+N_2+2N_3=3m_2+5$
simple components and the Wedderburn decomposition
of $FG$ has $M=M_1+M_2+2M_3=6m_2+1$ simple components.

Now $G/G'=\langle \overline{x},\overline{y}\rangle \iso C_2\times C_{2^{m_2+1}}$
has $N_0=2(m_2+2)=2m_2+4$ cyclic subgroups,
six of order at most $4$, so the commutative part of $\B{Q}G$ is the direct sum of
$2m_2+4$ fields, while the commutative part of $FG$ is the direct sum
of $M_0=2(2m_2+4-6)+6=4m_2+2$ fields.
It follows that
$\B{Q}G$ is the direct sum of $N_0=2m_2+4$ fields and $N-N_0=(3m_2+5)-(2m_2+4)=m_2+1$
quaternion algebras,
while $FG$ is the direct sum of $M_0=4m_2+2$ fields
and $M-M_0=(6m_2+1)-(4m_2+2)=2m_2-1$ quaternion algebras.

\medskip
\noindent (iv.a) Suppose $m_1=m_2=2$.  Then $\C{Z}(G)\iso C_4\times C_4$
contains $N_1=10$ \B{Q}-classes that are also $F$-classes: $M_1=10$.
The group $H=\langle \C{Z}(G),x\rangle/\langle s\rangle\iso C_4\times C_4$
contains $10$ \B{Q}-classes that are also $F$-classes
while $\C{Z}(G)/\langle s\rangle \iso C_2\times C_4$ contains
six \B{Q}-classes that are also $F$-classes
so the total number of \B{Q}-classes in $G$ involving $x$ is $N_2=4$,
and $M_2=4$ as well.

Let $H=\langle \C{Z}(G),y\rangle\langle s\rangle\iso C_2\times C_8$.  This group
contains eight cyclic subgroups, six generated by elements of order
at most $4$.  So there are eight \B{Q}-classes and $6+2(8-6)=10$
$F$-classes.  It follows that the number of \B{Q}-classes involving
$y$ is $N_3=8-6=2$ and the number of $F$-classes involving
$y$ is $M_3=10-6=4$. The situation with regards classes involving $xy$
is similar:  $N_4=N_3$, $M_4=M_3$.  It follows that
the total number of \B{Q}-classes in $G$ is $N=N_1+N_2+2N_3=18$ and the total number
of $F$-classes is $M=M_1+M_2+2M_3=22$.

Now,  $G/G'\iso C_4\times C_8\times C_2$, so \lemref{lem:2cyclics} shows that
$\B{Q}[G/G']$ has $N_0=14$ simple components while $F[G/G']$ has $M_0=18$ components.
Consequently,
\B{Q}G is the direct sum of $N_0=14$ fields and $N-N_0=4$ quaternion algebras
while $FG$ is the
direct sum of $M_0=18$ fields and $M-M_0=4$ quaternion algebras,
each necessarily isomorphic to a ring of $2\times2$ matrices.

\smallskip
\noindent (iv.b) Suppose $m_1\ge3$ and $m_2=2$.  Then
$\C{Z}(G)\iso C_{2^{m_1}}\times C_4$ contains $N_1=4m_1+2$ \B{Q}-classes
and $M_1=8m_1-6$ $F$-classes.  (Again, we refer to \tabref{tab1}.)

Let $H=\langle \C{Z}(G),x\rangle=\langle x,t_2\rangle$.  Then $H/\langle s\rangle
\iso C_{2^{m_1}}\times C_4$ has $4m_1+2$ \B{Q}-classes and $8m_1-6$ $F$-classes
while $\C{Z}(G)/\langle s\rangle\iso C_{2^{m_1-1}}\times C_4$ has $4m_1-2$
and $8m_1-14$, respectively.  \lemref{lem:xclasses} gives
$N_2=(4m_1+2)-(4m_1-2)=4$ \B{Q}-classes involving $x$ and
$M_2=(8m_1-6)-(8m_1-14)=8$ $F$-classes.

Let $H=\langle \C{Z}(G),y\rangle=\langle t_1,y\rangle$.
Then $H/\langle s\rangle\iso C_{2^{m_1-1}}\times C_8$
has $8(m_1-1)-2=8m_1-10$ \B{Q}-classes and $16(m_1-1)-14=16m_1-30$ $F$-classes, giving
$N_3=(8m_1-10)-(4m_1-2)=4m_1-8$ \B{Q}-classes involving $y$ and
$M_3=(16m_1-30)-(8m_1-14)=8m_1-16$
$F$-classes.  Since $(xy)^2=st_1t_2$, $\langle \C{Z}(G),xy\rangle=\langle t_2,xy\rangle$,
so the number of \B{Q}- and $F$-classes involving $xy$ are the same as those
involving $x$.  In all, we have $N=N_1+2N_2+N_3=8m_1+2$ \B{Q}-classes in $G$
and $M=M_1+2M_2+M_3=16m_1-6$ $F$-classes.

Now $G/G'=\langle \overline{x},\overline{y}\rangle\iso C_{2^{m_1}}\times C_8$
has $N_0=8m_1-2$ \B{Q}-classes and $M_0=16m_1-14$ $F$-classes.  We conclude
that $\B{Q}G$ is the direct sum of $N_0=8m_1-2$ fields and $N-N_0=4$
quaternion algebras, while $FG$ is the direct sum of $M_0=16m_1-14$
fields and $M-M_0=8$ quaternion algebras.

\smallskip
\noindent (iv.c) Suppose $m_1=2$ and $m_2\ge3$.  Then
$\C{Z}(G)\iso C_4\times C_{2^{m_2}}$ contains $N_1=4m_2+2$ \B{Q}-classes
and $M_1=8m_2-6$ $F$-classes.

Let $H=\langle \C{Z}(G),x\rangle=\langle x,t_2\rangle$.  Then $H/\langle s\rangle
\iso C_4\times C_{2^{m_2}}$ has $4m_2+2$ \B{Q}-classes and $8m_2-6$ $F$-classes
while $\C{Z}(G)/\langle s\rangle\iso C_2\times C_{2^{m_2}}$ has $2m_2+2$
and $4m_2-2$, respectively.  \lemref{lem:xclasses} gives
$N_2=(4m_2+2)-(2m_2+2)=2m_2$ \B{Q}-classes involving $x$ and
$M_2=(8m_2-6)-(4m_2-2)=4m_2-4$ $F$-classes.

Let $H=\langle \C{Z}(G),y\rangle$.  Then $H/\langle s\rangle\iso C_2\times C_{2^{m_2+1}}$
has $2m_2+4$ \B{Q}-classes and $4m_2+2$ $F$-classes, giving
$N_3=(2m_2+4)-(2m_2+2)=2$ \B{Q}-classes involving $y$ and $M_3=(4m_2+2)-(4m_2-2)=4$
$F$-classes.  Since $(xy)^2=st_1t_2$, $\langle \C{Z}(G),xy\rangle=\langle t_1,xy\rangle$,
so the number of \B{Q}- and $F$-classes involving $xy$ are the same as those
involving $y$.  In all, we have $N=N_1+N_2+2N_3=6m_2+6$ \B{Q}-classes in $G$
and $M=M_1+M_2+2M_3=12m_2-2$ $F$-classes.

Now $G/G'=\langle \overline{x},\overline{y}\rangle\iso C_4\times C_{2^{m_2+1}}$
has $N_0=4m_2+6$ \B{Q}-classes and $M_0=8m_2+2$ $F$-classes.  We conclude
that $\B{Q}G$ is the direct sum of $N_0=4m_2+6$ fields and $N-N_0=2m_2$
quaternion algebras, while $FG$ is the direct sum of $M_0=8m_2+2$
fields and $M-M_0=4m_2-4$ quaternion algebras.

\bigskip
Now we turn to the general case, assuming that $m_1\ge3$ and $m_2\ge3$.
\lemref{lem:2cyclics} allows to us to determine the number $N_1$ of
cyclic subgroups of $\C{Z}(G)\iso C_{2^{m_1}}\times C_{2^{m_2}}$.
Of these, $10$ are generated by elements of order at most $4$ (those
lying in $C_4\times C_4$),
so the number of \B{Q}-classes in $\C{Z}(G)$ is $N_1$
and the number of $F$-classes is $M_1=10+2(N_1-10)=2N_1-10$.
Denote by $\alpha$ the number of cyclic subgroups of $\C{Z}(G)/\langle s\rangle
\iso C_{2^{m-1}}\times C_{2^{m_2}}$.  Since $m_1-1\ge 2$, ten of these
subgroups are generated by elements of order at most $4$, so there
are $\alpha$ and $10+2(\alpha-10)=2\alpha-10$ \B{Q}- and $F$-classes, respectively,
in this quotient.  Let $H=\langle\C{Z}(G),x\rangle=\langle x,t_2\rangle$.
Then $H/\langle s\rangle\iso C_{2^{m_1}}\times C_{2^{m_2}}$ has
$N_1$ and $2N_1-10$ \B{Q}- and $F$-classes, respectively,
so, using \lemref{lem:xclasses}, we know the numbers of \B{Q}- and $F$-classes
involving $x$:  $N_2=N_1-\alpha$ and $M_2=(2N_1-10)-(2\alpha-10)=2(N_1-\alpha)=2N_2$.

Let $H=\langle \C{Z}(G),y\rangle=\langle t_1,y\rangle$.
Suppose $H/\langle s\rangle\iso C_{2^{m_1-1}}\times C_{2^{m_2+1}}$
has $\beta$ cyclic subgroups.  Again, since $m_1-1\ge2$, ten
of these are generated by elements of order at most $4$, giving
$\beta$ and $2\beta-10$ \B{Q}- and $F$-classes, respectively.
From \lemref{lem:xclasses}, we learn that the numbers of \B{Q}-
and $F$-classes involving $y$ are $N_3=\beta-\alpha$
and $M_3=(2\beta-10)-(2\alpha-10)=2(\beta-\alpha)=2N_3$, respectively.

Let $H=\langle \C{Z}(G),xy\rangle$.  This group is
$\langle t_1\rangle\times \langle xy\rangle$ or $\langle t_2\rangle\times\langle xy\rangle$
according as $m_1\le m_2$ or $m_2>m_1$, respectively.  In any event,
letting $N_4$ denote the number of \B{Q}-classes involving $xy$ (as usual),
the number of $F$-classes involving $y$ is $M_4=2N_4$, as with previous situations.
If $N=N_1+N_2+N_3+N_4$, then $\B{Q}G$ is the direct sum of $N$
simple algebras, then $FG$ is the direct sum of $M=M_1+M_2+M_3+M_4
=(2N_1-10)+2N_2+2N_3+2N_4=2N-10$ simple algebras.

Now $G/G'=\langle \overline{x}\rangle\times\langle\overline{y}\rangle
\iso C_{2^{m_1}}\times C_{2^{m_2+1}}$ and \lemref{lem:2cyclics} allows
to compute $N_0$, the number of cyclic subgroups of this group, which
is also the number of \B{Q}-classes of $G/G'$
and the number simple components of $\B{Q}[G/G']$, the commutative
part of $\B{Q}G$.  On the other hand, the number of $F$-classes
of $G/G'$ is $M_0=10+2(N_0-10)=2N_0-10$.

We conclude that
$\B{Q}G$ is the direct sum of $N_0$ fields and $N-N_0$ quaternion
algebras, while $FG$ is the direct sum of $M_0=2N_0-10$ fields
and $M-M_0=2(N-N_0)$ quaternion algebras.

\begin{thm} Let $G$ be a group of type $\C{D}_4$ with $m_1\ge3$ and $m_2\ge3$.
Let $N=N_1+N_2+N_3+N_4$ where $N_1$, $N_2$,
$N_3$,  $N_4$ are, respectively, the numbers of cyclic subgroups of \C{Z}(G) and the numbers
of \B{Q}-classes involving $x$, $y$ and $xy$.  Let $N_0$
be the number of cyclic subgroups of $G/G'$.
(Note that these numbers
can be determined by \lemref{lem:2cyclics} and \lemref{lem:xclasses}.)
Then $\B{Q}G$ is the direct sum of $N_0$ fields and $N-N_0$ quaternion algebras.
Let $M=2N-10$ and $M_0=2N_0-10$.
For any finite field $K$ of odd order $q$, the Wedderburn decomposition
of the group algebra $KG$ has at least $M$ simple
components, a number that is achieved if
$q\equiv3\pmod{8}$, in which case
$KG$ is the direct sum of $M_0=2N_0-10$  fields and $M-M_0=2(N-N_0)$ quaternion algebras.
\end{thm}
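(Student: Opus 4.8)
The plan is to reduce everything to counting $\B{Q}$- and $K$-classes via the Witt-Berman theorem, and then to exploit the uniform structure forced by the hypothesis $m_1,m_2\ge3$. First I would record the partition of the $\B{Q}$-classes of $G$. Since $G/\C{Z}(G)\iso C_2\times C_2$, every element of $G$ lies either in $\C{Z}(G)$ or in one of the three cosets represented by $x$, $y$, $xy$; and since $G'=\langle s\rangle$ has order two, noncentral conjugacy classes all have the form $\{w,sw\}$. Hence the $\B{Q}$-classes split cleanly into those of the centre, of which there are $N_1$ (the number of cyclic subgroups of $\C{Z}(G)$, which equals the number of $\B{Q}$-classes of an abelian group), together with the classes involving $x$, $y$, $xy$, numbering $N_2$, $N_3$, $N_4$. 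This gives $N=N_1+N_2+N_3+N_4$ as the total number of $\B{Q}$-classes, hence the number of simple components of $\B{Q}G$.

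Next I would compute $N_2,N_3,N_4$ by means of \lemref{lem:xclasses}, which reduces each count to a difference of numbers of classes of abelian quotients: $H/\langle s\rangle$ (with $H=\langle\C{Z}(G),x\rangle$, and so on) and $\C{Z}(G)/\langle s\rangle$. Using $x^2=t_1$, $y^2=t_2$ and $(xy)^2=st_1t_2$ I would identify these quotients as explicit direct products of two cyclic $2$-groups, the $xy$-case bifurcating according to whether $m_1\le m_2$ or $m_1>m_2$; \lemref{lem:2cyclics} then counts their cyclic subgroups, that is, their $\B{Q}$-classes. The crucial point---and the real engine of the theorem---is that because $m_1\ge3$ and $m_2\ge3$, every abelian $2$-group arising here (the centre, each $H/\langle s\rangle$, each $\C{Z}(G)/\langle s\rangle$, and $G/G'$) has both invariant factors of order at least $4$, hence contains a copy of $C_4\times C_4$ accounting for exactly its ten cyclic subgroups generated by elements of order $\le4$. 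By \lemref{lem1}, these ten $\B{Q}$-classes remain $F$-classes while every other $\B{Q}$-class splits into exactly two $F$-classes when $q\equiv3\pmod8$. Thus the number of $F$-classes of such a group is twice its number of $\B{Q}$-classes minus $10$.

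This uniform deficit of $10$ is what forces the clean numerology. For the centre it yields $M_1=2N_1-10$; but in the differences produced by \lemref{lem:xclasses} for the $x$-, $y$- and $xy$-parts the deficits cancel, leaving $M_2=2N_2$, $M_3=2N_3$, $M_4=2N_4$. Summing, $M=M_1+M_2+M_3+M_4=2N-10$. For the commutative/noncommutative split I would invoke the decomposition $KG\iso K[G/G']\oplus\Delta(G,G')$, with $K[G/G']$ a sum of fields and $\Delta(G,G')$ a sum of quaternion algebras. The number of fields is the number of $K$-classes of the abelian group $G/G'\iso C_{2^{m_1}}\times C_{2^{m_2+1}}$, namely $N_0$ cyclic subgroups over $\B{Q}$ and, by the same $C_4\times C_4$ observation, $M_0=2N_0-10$ over $F$; subtracting from the totals gives $N-N_0$ quaternion algebras over $\B{Q}$ and $M-M_0=2(N-N_0)$ over $F$.

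Finally, for an arbitrary finite field $K$ of odd order $q$, \lemref{lem1} shows that each $\B{Q}$-class splits into at least as many $K$-classes as it does $F$-classes, with equality for $q\equiv3\pmod8$; this yields the ``at least $M$'' bound and its attainment. Over a finite field Wedderburn's little theorem leaves no noncommutative division algebras, so each four-dimensional quaternion component must be a $2\times2$ matrix ring, completing the statement. The main obstacle I anticipate is structural rather than computational: correctly identifying the isomorphism types of the quotients $H/\langle s\rangle$---in particular the $xy$-quotient, whose larger cyclic factor depends on the comparison of $m_1$ and $m_2$ and whose generator must be checked to have order at least $4$---since all of the elegant cancellations rest on these groups genuinely having both invariant factors of order at least $4$.
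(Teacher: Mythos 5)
Your proposal is correct and follows essentially the same route as the paper: Witt--Berman reduces everything to counting $\B{Q}$- and $K$-classes, \lemref{lem:xclasses} converts the counts for the cosets of $x$, $y$, $xy$ into differences of cyclic-subgroup counts of abelian quotients identified via $x^2=t_1$, $y^2=t_2$, $(xy)^2=st_1t_2$, and the hypothesis $m_1,m_2\ge3$ guarantees that each relevant abelian group contains $C_4\times C_4$ and hence exactly ten non-splitting $\B{Q}$-classes, so that each class count doubles minus $10$ and the deficits cancel in the differences to give $M_i=2N_i$ for $i=2,3,4$ and $M=2N-10$, $M_0=2N_0-10$. This is precisely the paper's computation, including the case split on $m_1$ versus $m_2$ for the $xy$-quotient.
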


\section{Groups of type $\C{D}_5$}\label{sec:d5}
Once again, we begin with a lemma that enables us to determine the number of cyclic
subgroups in
the centre of the groups of interest in this section.

\begin{lem}\label{lem:3cyclics} Let $A=C_{2^a}\times C_{2^b}\times C_{2^c}$
be the direct product of cyclic groups of orders $2^a$, $2^b$ and $2^c$
with $a\ge b\ge c\ge1$.  The number of cyclic subgroups of $A$ of order $2^k$
is
\begin{itemize}
\item $7(2^{2(k-1)})$ if $1\le k\le c$,
\item $3(2^{c+k-1})$  if $c< k\le b$, and
\item $2^{b+c}$  if $b< k\le a$.
\end{itemize}
In all, $A$ has $N=\tfrac73(4^c-1)+3(2^c)(2^b-2^c)+(a-b)2^{b+c}+1$
cyclic subgroups (and hence $N$ \B{Q}-classes).
\end{lem}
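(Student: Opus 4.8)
The plan is to follow verbatim the strategy used in the proof of \lemref{lem:2cyclics}, which handled the two-factor case: for each $k$ I would count the elements of $A$ of exact order $2^k$, then divide by $\phi(2^k)=2^{k-1}$, the number of generators of a cyclic group of order $2^k$, to obtain the number of cyclic subgroups of that order. The one new ingredient for three factors is the observation that $(u,v,w)\in A$ satisfies $(u,v,w)^{2^k}=1$ if and only if $u^{2^k}=v^{2^k}=w^{2^k}=1$. Since the number of elements of order dividing $2^k$ in $C_{2^a}$ is $2^{\min(k,a)}$, the number of elements of $A$ killed by $2^k$ is $2^{\min(k,a)+\min(k,b)+\min(k,c)}$, and subtracting the corresponding count for $2^{k-1}$ gives the number of elements of \emph{exact} order $2^k$.

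The three ranges of $k$ in the statement arise precisely from which of the minima are saturated. For $1\le k\le c$ none is saturated, so the count of killed elements is $2^{3k}$ and the number of elements of exact order $2^k$ is $2^{3k}-2^{3(k-1)}=7\cdot 2^{3(k-1)}$; dividing by $2^{k-1}$ yields $7\cdot 2^{2(k-1)}$. For $c<k\le b$ only the $c$-factor saturates (here $\min(k-1,c)=c$), giving $2^{2k+c}-2^{2(k-1)+c}=3\cdot 2^{2(k-1)+c}$ elements of exact order $2^k$ and hence $3\cdot 2^{c+k-1}$ cyclic subgroups. For $b<k\le a$ both the $b$- and $c$-factors saturate, leaving $2^{k+b+c}-2^{(k-1)+b+c}=2^{(k-1)+b+c}$ elements of order $2^k$ and therefore $2^{b+c}$ subgroups. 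These are exactly the three formulas claimed.

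Finally I would assemble the total $N$ by summing over all orders, including the trivial subgroup ($k=0$, contributing $1$):
\[ N = 1 + \sum_{k=1}^{c} 7\cdot 2^{2(k-1)} + \sum_{k=c+1}^{b} 3\cdot 2^{c+k-1} + \sum_{k=b+1}^{a} 2^{b+c}. \]
The first sum is geometric in $4$ and evaluates to $\tfrac73(4^c-1)$; the second factors as $3\cdot 2^c\sum_{j=c}^{b-1}2^j = 3(2^c)(2^b-2^c)$; and the third is simply $(a-b)2^{b+c}$. Adding the initial $1$ produces the asserted total, and the empty-sum conventions ($c=b$ or $b=a$) agree with the vanishing of the corresponding terms.

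The argument is entirely routine once the element-counting identity is in hand, so I do not anticipate a genuine conceptual obstacle. The only place demanding care is the case analysis at the boundaries $k=c$ and $k=b$: one must verify that $\min(k-1,c)=c$ when $k>c$ and $\min(k-1,b)=b$ when $k>b$, which is what makes each subtraction collapse to a single power of $2$, and one must index the three geometric sums so that no order $2^k$ is counted twice or omitted.
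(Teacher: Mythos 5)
Your proof is correct and follows essentially the same route as the paper: count the elements of exact order $2^k$, divide by $\phi(2^k)=2^{k-1}$, and sum the resulting geometric series (plus $1$ for the trivial subgroup). The only cosmetic difference is that you compute the element counts uniformly via the exponent $\min(k,a)+\min(k,b)+\min(k,c)$, whereas the paper handles the middle range $c<k\le b$ by reducing to \lemref{lem:2cyclics}; the resulting counts agree in every range.
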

\begin{proof}
Suppose $1\le k\le c$.  An element $(u,v,w)\in A$ satisfies $(u,v,w)^{2^k}=1$ if
and only if it belongs to a subgroup of $A$ isomorphic to
$C_{2^k}\times C_{2^k}\times C_{2^k}$, so the
number of elements in $A$ of order $2^k$ is $|C_{2^k}\times C_{2^k}\times C_{2^k}|$
less $|C_{2^{k-1}}\times C_{2^{k-1}}\times C_{2^{k-1}}|$, that is,
$2^{3k}-2^{3(k-1)}=2^{3(k-1)}(2^3-1)=7(2^{3(k-1)})$.
Since there are $\phi(2^k)=2^{k-1}$ generators of any cyclic subgroup
of order $2^k$, the number of cyclic subgroups of order $2^k$ in this first case
is $7(2^{3(k-1)})/2^{k-1}=7(2^{2(k-1)})$.
\smallskip\par Suppose $c<k\le b$.  An element $(u,v,w)\in A$ has order $2^k$
if and only if it has order $2^k$ in a copy of $C_{2^k}\times C_{2^k}\times C_{2^c}$.
Using \lemref{lem:2cyclics}, we see that there are $3(4^{k-1})2^c$ such elements.
With reasoning as before, the number of cyclic subgroups of this order is therefore
$3(2^{k-1})2^c$.
\smallskip\par Suppose that $b<k\le a$.  Then $(u,v,w)\in A$ has order
$2^k$ if and only if it has order $2^k$ in a subgroup of $A$ isomorphic to
$C_{2^k}\times C_{2^b}\times C_{2^c}$.  There are $2^{k-1}2^{b+c}$ such elements,
so the number of cyclic subgroups is $2^{b+c}$.

The number of cyclic subgroups of order $2^k$, with $1\le k\le c$, is
\begin{multline*}
7+7(2^2)+7(2^4)+7(2^6)+\cdots +7(2^{2(c-1)})\\
 =7(1+2^2+2^4+\cdots+2^{2(c-1)})=\tfrac73(2^{2c}-1)=\tfrac73(4^c-1).
\end{multline*}
The number of cyclic subgroups of order $2^k$ with $c<k\le b$ is
\begin{multline*}
3(2^c)(2^c+2^{c+1}+2^{c+2}+\cdots+2^{b-1}) \\
\begin{array}{cl}
= &3(2^c)(2^c)(1+2+2^2+\cdots+2^{b-c-1}) \\
= &3(2^c)(2^c)(2^{b-c}-1) =3(2^c)(2^b-2^c)
\end{array}
\end{multline*}
and the number of cyclic subgroups of order $2^k$
with $b<k\le a$ is $(a-b)2^{b+c}$.
Including the trivial subgroup, the total number of cyclic subgroups of $A$ is as stated.
\end{proof}

We refer the reader to \tabref{tab1} where many useful consequences of this lemma
appear.  To determine the numbers involved, use
\lemref{lem:3cyclics} to obtain the number of \B{Q}-classes
in the direct product of three cyclic groups
and then \lemref{lem1} to see which of these split into (two) $F$-classes and
which do not, to get the number of $F$-classes.  Specifically,
if there are $\alpha$ \B{Q}-classes, $\beta$ of which correspond to cyclic groups
generated by an element of order at most $4$, then there are $\alpha+2(\alpha-\beta)
=2\alpha-\beta$ $F$-classes.

\medskip

Now let
\begin{multline*}
G=\langle t_1,t_2,t_3,x,y\mid t_1^{2^{m_1}}=t_2^{2^{m_2}}=t_3^{2^{m_3}}=1, \\
x^2=t_2, y^2=t_3, \; t_1,t_2,t_3 \text{ central}, (x,y)=t_1^{2^{m_1-1}}=s\rangle
\end{multline*}
be a group of type $\C{D}_5$.

Again, our analysis of the semisimple structure of $KG$, for various
fields $K$, begins with a study of some special cases.
\bigskip

\noindent (i) Assume $m_1=m_2=m_3=1$.  The results in this section hold
for any field $K$ of characteristic different from $2$.  We have $s=t_1$ and
$\C{Z}(G)\iso C_2\times C_2\times C_2$ with eight cyclic subgroups,
all of suitably small order, so there are $8$ $K$-classes of $\C{Z}(G)$.
Since $\langle\C{Z}(G),x\rangle=\langle t_1,x,t_2\rangle$,
$\langle \C{Z}(G),x \rangle/\langle s\rangle=\langle \overline{x},\overline{t_2}\rangle
\iso C_4\times C_2$ has six cyclic subgroups.
Since $\C{Z}(G)/\langle s\rangle\iso C_2\times C_2$ has four,
\lemref{lem:xclasses} says there are two $K$-classes involving $x$.
Similarly, there are two classes involving $y$.
Now $xy$ has order $4$, since $(xy)^2=t_1t_2t_3$, so
$\langle \C{Z}(G),xy \rangle/\langle s\rangle=\langle \overline{t_2},\overline{xy}\rangle
\iso  C_2\times C_4$ has six cyclic subgroups.  This implies,
another two $K$-classes, these involving $xy$.
In all, there are $8+3(2)=14$ $K$-classes in $G$.

As $G/G'\iso C_4\times C_4$,  $K[G/G']$ is the direct sum of ten fields,
so $KG$ is the direct sum of ten fields and four quaternion algebras.

\medskip\noindent(ii.a) Assume $m_1=1$, $m_2=1$, $m_3=2$.  (Conclusions
when $m_1=1$, $m_2=2$, $m_3=1$ are the same.)  Again we have $s=t_1$.
In this situation, $\C{Z}(G)\iso C_2\times C_2\times C_4$ has $12$ cyclic subgroups,
so there are $12$ $K$-classes in the centre, where again $K$ denotes
any field of characteristic different from $2$.
Now $\langle\C{Z}(G),x\rangle/\langle s\rangle=\langle\overline{x},\overline{t_3}\rangle
\iso C_4\times C_4$, so there are $10$ cyclic subgroups, all of suitably
small order, giving $10$ $K$-classes.
As $\C{Z}(G)/\langle s\rangle\iso C_2 \times C_4$ has $6$, \lemref{lem:xclasses}
gives us four $K$-classes involving $x$.
Now $\langle\C{Z}(G),y\rangle/\langle s\rangle
\iso \langle\overline{t_2},\overline{y}\rangle$ and $\langle \C{Z}(G),xy\rangle/\langle s\rangle=
\langle\overline{t_2},\overline{xy}\rangle$ (because $(xy)^2=st_2t_3$).  These
groups are both isomorphic to
$C_2\times C_8$, a group which contains eight cyclic subgroups,
six of which are generated by elements of order at most $4$.
These numbers imply $8-6=2$ \B{Q}-classes and $6+2(2)-6=4$ $F$-classes.
In all, there are $12+4+2+2=20$ \B{Q}-classes and $12+4+4+4=24$ $F$-classes.

Since $G/G'=\langle\overline{x},\overline{y}\rangle\iso C_4\times C_8$ has $14$
cyclic subgroups, of which ten are generated by elements of order at most $4$, the commutative
part of $\B{Q}G$ is the direct sum of $14$ fields and the commutative part
of $FG$ is the direct sum of $10+2(4)=18$ fields.  It follows that $\B{Q}G$
is the direct sum of $14$ fields and six quaternion algebras while $FG$
is the direct sum of $18$ fields and six quaternion algebras.

\medskip\noindent (ii.b) Assume $m_1=1$, $m_2=1$ and $m_3\ge3$.
As in case ii.a, we have $s=t_1$, but here,
$\C{Z}(G)\iso C_2\times C_2\times C_{2^{m_3}}$ has $N_1=4(m_3+1)=4m_3+4$ cyclic subgroups,
of which $12$ are generated by elements of order at most $4$.  So there are $N_1$ \B{Q}-classes in the centre
and $M_1=12+2(N_1-12)=2N_1-12=8m_3-4$ $F$-classes.
Now $\langle\C{Z}(G),x\rangle/\langle s\rangle=\langle\overline{x},\overline{t_3}\rangle
\iso C_4\times C_{2^{m_3+1}}$ has $4[3+(m_3+1)-2]-2=4m_3+6$ cyclic subgroups,
ten of which are generated by elements of order at most $4$.  This means $4m_3+6$ \B{Q}-classes
and $10+2(4m_3+6-10)=8m_3+2$ $F$-classes.
Now $\C{Z}(G)/\langle s\rangle\iso C_2 \times C_{2^{m_3}}$ has $2(m_3+1)=2m_3+2$
cyclic subgroups, six of which are generated by elements of order at most $4$.  This means $2m_3+2$
\B{Q}-classes and $6+2(2m_3+2-6)=4m_3-2$ $F$-classes.  \lemref{lem:xclasses}
now tells us that there are $N_2=(4m_3+6)-(2m_3+2)=2m_3+4$ \B{Q}-classes involving $x$
and $M_2=(8m_3+2)-(4m_3-2)=4m_3+4$ $F$-classes involving $x$.
Now $\langle\C{Z}(G),y\rangle/\langle s\rangle
\iso \langle\overline{t_2},\overline{y}\rangle$ and $\langle \C{Z}(G),xy\rangle/\langle s\rangle=
\langle\overline{t_2},\overline{xy}\rangle$ (because $(xy)^2=st_2t_3$).  These
groups are both isomorphic to $C_2\times C_{2^{m_3+1}}$, a group which contains $2(m_3+2)=2m_3+4$
cyclic subgroups,
six of which are generated by elements of order at most $4$, so this group contains $2m_3+4$ \B{Q}-classes
and $6+2(2m_3+4-6)=4m_3+2$ $F$-classes.  By \lemref{lem:xclasses},
we have $N_3=N_4=(2m_3+4)-(2m_3+2)=2$
\B{Q}-classes and $M_3=M_4=(4m_3+2)-(4m_3-2)=4$ $F$-classes.
In all, there are $N=N_1+N_2+2N_3=6m_3+12$ \B{Q}-classes and $M=M_1+M_2+2M_3=12m_3+8$ $F$-classes,
so the decompositions of $\B{Q}G$ and $FG$ have, respectively, $N=6m_3+12$ and $M=12m_3+8$
simple components.

Finally, as $G/G'=\langle\overline{x},\overline{y}\rangle\iso C_4\times C_{2^{m_3+1}}$ has
$4[3+(m_3+1)-2]-2=4m_3+6$ cyclic subgroups, of which ten are generated by elements of order at most $4$,
the commutative
part of $\B{Q}G$ is the direct sum of $4m_3+6$ fields and the commutative part
of $FG$ is the direct sum of $10+2(4m_3+6-10)=8m_3+2$ fields.  It follows that $\B{Q}G$
is the direct sum of $4m_3+6$ fields and $2m_3+6$ quaternion algebras while $FG$
is the direct sum of $8m_3+2$ fields and $4m_3+6$ quaternion algebras.

\medskip\noindent (iii.a) Suppose $m_1=1$ and $m_2=m_3=2$.
Then $s=t_1$ and $\C{Z}(G)\iso C_2\times C_4\times C_4$ has twenty cyclic subgroups,
all (clearly) of order at most $4$.
Here, we have
$\langle \C{Z}(G),x \rangle/\langle s\rangle=\langle\overline{x},\overline{t_3}\rangle$,
$\langle\C{Z}(G),y\rangle/\langle s\rangle=\langle\overline{t_2},\overline{y}\rangle
=\langle \C{Z}(G),xy \rangle/\langle s\rangle$ (since $(xy)^2=st_2t_3$).  Each of these
groups, being isomorphic to $C_8\times C_4$ has $14$ cyclic subgroups,
ten of which are generated by elements of order at most four.  So we have $14$ \B{Q}-classes
and $10+2(4)=18$ $F$-classes in this group.
As $\C{Z}(G)/\langle s\rangle\iso C_4\times C_4$ has $10$ cyclic subgroups,
all of order at most $4$, \lemref{lem:xclasses} tells
us that there are four \B{Q}-classes involving each of $x$, $y$, $xy$
and $18-10=8$ $F$-classes.  In all, we have $20+3(4)=32$ \B{Q}-classes and $20+3(8)=44$
$F$-classes.

Finally, as $G/G'\iso C_8\times C_8$ has $22$ cyclic subgroups, $10$
of order at most $4$, the commutative part of $\B{Q}G$ is the direct sum
of $22$ fields and the commutative part of $FG$ is the direct sum
of $10+2(12)=34$ fields.  We conclude that $\B{Q}G$ is the direct sum
of $22$ fields and ten quaternion algebras, while $FG$ is the direct sum of $34$ fields
and $10$ quaternion algebras.

\medskip\noindent (iii.b) Assume $m_1=1$, $m_2\ge3$ and $m_3=2$.  (By symmetry,
our conclusions when $m_1=1$, $m_2=2$, $m_3\ge3$ will be the same.)
As in case iii.a, $s=t_1$, but here
$\C{Z}(G)\iso C_2\times C_{2^{m_2}}\times C_4$, a group with $8[3+(m_2-1)]-4=8m_2+4$
cyclic subgroups, $20$ of which are generated by elements of order at most $4$.  So the centre contains
$N_1=8m_2+4$ \B{Q}-classes and $M_1=20+2(N_1-20)=16m_2-12$ $F$-classes.
Now $\langle \C{Z}(G),x \rangle/\langle s\rangle=\langle\overline{x},\overline{t_3}\rangle
\iso C_{2^{m_2+1}}\times C_4$
has $4[3+(m_2+1)-2]-2=4m_2+6$ cyclic subgroups, ten generated by elements of order at most $4$.
So there are $4m_2+6$ \B{Q}-classes and $10+2(4m_2+6-10)=8m_2+2$ $F$-classes in this group.
As $\C{Z}(G)/\langle s\rangle\iso C_{2^{m_2}}\times C_4$ has $4m_2+2$ cyclic subgroups,
ten generated by elements of order at most $4$, this group has $4m_2+2$ \B{Q}-classes and $10+2(4m_2+2-10)=8m_2-6$
$F$-classes.  \lemref{lem:xclasses} gives $N_2=(4m_2+6)-(4m_2+2)=4$ \B{Q}-classes
and $M_2=(8m_2+2)-(8m_2-6)=8$ $F$-classes involving $x$.
The situation with $xy$ is the same as with $x$, so $N_4=N_2$ and $M_4=M_2$.
Turning to $y$, we have $\langle \C{Z}(G),y \rangle/\langle s\rangle=\langle\overline{t_2},\overline{y}\rangle
\iso C_{2^{m_2}}\times C_8$
with $8(3+m_2-3)-2=8m_2-2$ cyclic subgroups, ten generated
by elements of order at most $4$.
So there are $8m_2-2$ \B{Q}-classes and $10+2(8m_2-2-10)=16m_2-14$ $F$-classes in this group
and hence $N_3=(8m_2-2)-(4m_2+2)=4m_2-4$ \B{Q}-classes and $M_3=(16m_2-14)-(8m_2-6)=8m_2-8$
$F$-classes involving $y$.  In all, there are $N=N_1+2N_2+N_3=12m_2+8$ \B{Q}-classes
and $M=M_1+2M_2+M_3=24m_2-4$ $F$-classes.
\par Finally, as $G/G'\iso C_{2^{m_2}}\times C_4$, the commutative part of $\B{Q}G$
is the direct sum of $N_0=4m_2+2$ fields, while the commutative part of $FG$
is the direct sum of $M_0=8m_2-6$ fields.   We conclude that $\B{Q}G$ is the direct sum
of $N_0=4m_2+2$ fields and $N-N_0=8m_2+6$ quaternion algebras,
while $FG$ is the direct sum of $M_0=8m_2-6$ fields
and $M-M_0=16m_2+2$ quaternion algebras.

\medskip\noindent (iii.c) Assume $m_1=1$, $m_2\ge3$ and $m_3\ge3$.  We also assume
$m_2\ge m_3$  (by symmetry, the numbers we obtain will be the same if $m_3\ge m_2$)
and suppose first that $m_2>m_3$.

Again we have $s=t_1$, but here
$\C{Z}(G)\iso C_2\times C_{2^{m_2}}\times C_{2^{m_3}}$, a group with $N_1=2[2^{m_3}(3+m_2-m_3)-2]
=2^{m_3+1}(3+m_2-m_3)-4$ cyclic subgroups, $20$ of which are generated by elements of order at most $4$.
So the centre contains
$N_1$ \B{Q}-classes and $M_1=20+2(N_1-20)=2^{m_3+2}(3+m_2-m_3)-28$ $F$-classes.
Now $\langle \C{Z}(G),x \rangle/\langle s\rangle
=\langle\overline{x},\overline{t_3}\rangle
\iso C_{2^{m_2+1}}\times C_{2^{m_3}}$
has $2^{m_3}(3+m_2+1-m_3)-2=2^{m_3}(4+m_2-m_3)-2$ cyclic subgroups, ten
generated by elements of order at most $4$.
So there are $2^{m_3}(4+m_2-m_3)-2$ \B{Q}-classes and $10+2[2^{m_3}(4+m_2-m_3)-2-10]=
2^{m_3+1}(4+m_2-m_3)-14$ $F$-classes in this group.
As $\C{Z}(G)/\langle s\rangle\iso C_{2^{m_2}}\times C_{2^{m_3}}$ has $2^{m_3}(3+m_2-m_3)-2$
cyclic subgroups,
ten generated by elements of order at most $4$, this group has $2^{m_3}(3+m_2-m_3)-2$ \B{Q}-classes and
$10+2[2^{m_3}(3+m_2-m_3)-2-10]=2^{m_3+1}(3+m_2-m_3)-14$
$F$-classes.  \lemref{lem:xclasses} gives $N_2=2^{m_3}$ \B{Q}-classes
and $M_2=2^{m_3+1}$ $F$-classes involving $x$.
The situation with $xy$ is the same as with $x$, so $N_4=N_2$ and $M_4=M_2$.
Turning to $y$, we have $\langle \C{Z}(G),y \rangle/\langle s\rangle
=\langle\overline{t_2},\overline{y}\rangle
\iso C_{2^{m_2}}\times C_{2^{m_3+1}}$
with $2^{m_3+1}[3+m_2-(m_3+1)]-2=2^{m_3+1}(2+m_2-m_3)-2$ cyclic subgroups, ten generated by elements of order at most $4$.
So there are $2^{m_3+1}(2+m_2-m_3)-2$ \B{Q}-classes and
$10+2[2^{m_3+1}(2+m_2-m_3)-2-10]=2^{m_3+2}(2+m_2-m_3)-14$ $F$-classes in this group
and hence $N_3=2^{m_3}(1+m_2-m_3)$ \B{Q}-classes and $M_3=2^{m_3+1}(1+m_2-m_3)$
$F$-classes involving $y$.  In all, there are $N=N_1+2N_2+N_3=2^{m_3}(9+3m_2-3m_3)-4$ \B{Q}-classes
and $M=M_1+2M_2+M_3=2^{m_3+1}(9+3m_2-3m_3)-28$ $F$-classes.
\par Finally, as $G/G'\iso C_{2^{m_2+1}}\times C_{2^{m_3+1}}$, the commutative part of $\B{Q}G$
is the direct sum of $N_0=2^{m_3+1}[3+(m_2+1)-(m_3+1)]-2=2^{m_3+1}(3+m_2-m_3)-2$ fields, while the commutative part of $FG$
is the direct sum of $M_0=10+2(N_0-10)=2N_0-10=2^{m_3+2}(3+m_2-m_3)-14$ fields.
We conclude that $\B{Q}G$ is the direct sum
of $N_0$ fields and $N-N_0=2^{m_3}(3+m_2-m_3)-2$ quaternion algebras,
while $FG$ is the direct sum of $M_0$ fields
and $M-M_0=2^{m_3+1}(3+m_2-m_3)-14$ quaternion algebras.
\smallskip

Suppose $m_2=m_3$.
We have $s=t_1$ and
$\C{Z}(G)\iso C_2\times C_{2^{m_2}}\times C_{2^{m_2}}$, a group with $N_1=2[2^{m_2}(3)-2]
=2^{m_2+1}(3)-4$ cyclic subgroups, $20$ of which are generated by elements of order at most $4$.
So the centre contains
$N_1$ \B{Q}-classes and $M_1=20+2(N_1-20)=2^{m_2+2}(3)-28$ $F$-classes.
Now $\langle \C{Z}(G),x \rangle/\langle s\rangle=\langle\overline{x},\overline{t_3}\rangle
\iso C_{2^{m_2+1}}\times C_{2^{m_2}}$
has $2^{m_2}(3+1)=2^{m_2+2}$ cyclic subgroups, ten generated by elements of order at most $4$.
So there are $2^{m_2+2}$ \B{Q}-classes and $10+2[2^{m_2+2}-10]=
2^{m_2+3}-10$ $F$-classes in this group.
As $\C{Z}(G)/\langle s\rangle\iso C_{2^{m_2}}\times C_{2^{m_3}}$ has $2^{m_2}(3)$
cyclic subgroups,
ten generated by elements of order at most $4$, this group has $2^{m_2}(3)$ \B{Q}-classes and
$10+2(2^{m_2}(3)-10)=2^{m_2+1}(3)-10$
$F$-classes.  \lemref{lem:xclasses} gives $N_2=2^{m_2}$ \B{Q}-classes
and $M_2=2^{m_2+1}$ $F$-classes involving $x$.
Since $\langle\C{Z}(G),xy\rangle/\langle s\rangle=\langle \overline{xy},\overline{t_3}\rangle$,
the situation with $xy$ is the same as with $x$.
We have $N_4=N_2$ and $M_4=M_2$.
Turning to $y$, we have $\langle \C{Z}(G),y \rangle/\langle s\rangle
=\langle\overline{t_2},\overline{y}\rangle
\iso C_{2^{m_2}}\times C_{2^{m_2+1}}$
with $2^{m_2}(3+1)-2=2^{m_2+2}-2$ cyclic subgroups, ten generated by elements of order at most $4$.
So there are $2^{m_2+2}-2$ \B{Q}-classes and
$10+2[2^{m_2+2}-2-10]=2^{m_2+3}-14$ $F$-classes in this group
and hence $N_3=2^{m_2}-2$ \B{Q}-classes and $M_3=2^{m_2+1}-4$
$F$-classes involving $y$.  In all, there are $N=N_1+2N_2+N_3=2^{m_2}(9)-6$ \B{Q}-classes
and $M=M_1+2M_2+M_3=2^{m_2+1}(9)-32$ $F$-classes.
\par Finally, as $G/G'\iso C_{2^{m_2+1}}\times C_{2^{m_2+1}}$, the commutative parts of $\B{Q}G$
and of $FG$ are as before:  $N_0=2^{m_2+1}(3)-2$ and $M_0=2^{m_2+2}(3)-14$.
We conclude that $\B{Q}G$ is the direct sum
of $N_0$ fields and $N-N_0=2^{m_2}(3)-4$ quaternion algebras,
while $FG$ is the direct sum of $M_0$ fields
and $M-M_0=2^{m_2+1}(3)-18$ quaternion algebras.

\medskip\noindent (iv.a) Assume $m_1=2$, $m_2=m_3=1$.
Then  $s=t_1^2$ and $\C{Z}(G)\iso C_4\times C_2\times C_2$ has
$12$ cyclic subgroups, all of suitably small order, so the centre
has $12$ $K$-classes whether  $K=\B{Q}$ or a field of order $q\equiv3\pmod8$.
Now, $\langle \C{Z}(G),x \rangle/\langle s\rangle
\iso C_2\times C_4\times C_2$ has twelve cyclic subgroups and
$\C{Z}(G)/\langle s\rangle\iso C_2 \times C_2\times C_2$ has eight,
so there are four $K$-classes involving $x$.
Similarly, there are also four classes involving $y$ and four involving $xy$
(because $(xy)^2=st_2t_3$), giving $12+3(4)=24$ $K$-classes in all.
Since, $G/G'\iso C_2\times C_4\times C_4$ has $20$ $K$-classes,
it follows that  $KG$ is the direct sum of $20$ fields and four quaternion algebras.

\medskip\noindent (iv.b)  Assume $m_1\ge3$, $m_2=m_3=1$.
Here $\C{Z}(G)\iso C_{2^{m_1}}\times C_2\times C_2$ has
$4(m_1+1)=4m_1+4$ cyclic subgroups, twelve of order at most $4$,
so the centre has $N_1=4m_1+4$ \B{Q}-classes and $M_1=12+2(N_1-12)=8m_1-4$ $F$-classes.
Now, $\langle \C{Z}(G),x \rangle/\langle s\rangle
\iso C_{2^{m_1-1}}\times C_4\times C_2$ has $2(4)[3+(m_1-1)-2)-2]=8m_1-4$ cyclic subgroups,
$20$ of order at most $4$, so this group has $8m_1-4$ \B{Q}-classes and
$20+2(8m_1-4-20)=16m_1-28$ $F$-classes.
The group $\C{Z}(G)/\langle s\rangle\iso C_{2^{m_1-1}} \times C_2\times C_2$ has
$4m_1$ cyclic subgroups, of which eight are generated by elements of order at most $4$, giving
$4m_1$ \B{Q}-classes and $8+2(4m_1-8)=8m_1-8$ $F$-classes.  Using \lemref{lem:xclasses},
we see that there are $N_2=4m_1-4$ \B{Q}-classes and $M_2=8m_1-20$ $F$-classes
involving $x$.  The numbers are the same for classes involving $y$ and $xy$.
In all, we have $N=N_1+3N_2=16m_1-8$ \B{Q}-classes and $M=M_1+3M_2=32m_1-64$ $F$-classes.
Since, $G/G'\iso C_{2^{m_1-1}}\times C_4\times C_4$ has $\tfrac73(16-1)+(m_1-1-2)2^4+1=16m_1-12$
cyclic subgroups, $36$ of which are generated by elements of order at most $4$, there are $N_0=16m_1-12$
\B{Q}-classes in $G/G'$ and $M_0=36+2(N_0-36)=32m_1-60$ $F$-classes.
It follows that  $\B{Q}G$ is the direct sum of $16m_1-12$ fields and $N-N_0=4$
quaternion algebras while $FG$ is the direct sum of $32m_1-60$ fields and
four quaternion algebras.

\medskip\noindent (v.a) Suppose $m_1=2$, $m_2=1$, $m_3=2$. (The conclusions
when $m_1=2$, $m_2=2$, $m_3=1$ are the same.)
Here we have $\C{Z}(G)=C_4\times C_2\times C_4$ and $s=t_1^2$.
The centre has $20$ cyclic subgroups all of order at most $4$, so there
are $20$ \B{Q}-classes and $20$ $F$-classes.
The group $\langle \C{Z}(G),x\rangle/\langle s\rangle
=\langle\overline{t_1},\overline{x},\overline{t_3}\rangle
\iso C_2\times C_4\times C_4$ has
$20$ cyclic subgroups and $\C{Z}(G)/\langle s\rangle\iso C_2\times C_2\times C_4$ has $12$,
so there are eight $K$-classes involving $x$ where $K$ is any field
of characteristic different from $2$.
Also $\langle \C{Z}(G),y\rangle/\langle s\rangle\iso\times C_2\times C_2\times C_8$ has
$16$ cyclic subgroups, $12$ of which are generated by elements of order at most $4$.
This gives four \B{Q}-classes and $12+2(4)-12=8$ $F$-classes involving $y$.
Since $(xy)^2=st_2t_3$, $\langle \C{Z}(G),xy \rangle/\langle s\rangle
=\langle\overline{t_1},\overline{t_2},\overline{xy}\rangle
\iso C_2\times C_2 \times C_8$ has $16$ cyclic subgroups, $12$ of which are generated by elements of order at most $4$.  This gives another four \B{Q}-classes involving $xy$
and $12+2(4)-12=8$ $F$-classes.  In all, this group has $20+8+4+4=36$ \B{Q}-classes
and $20+8+8+8=44$ $F$-classes.

Since $G/G'\iso C_2\times C_2\times C_4$, we see that $K[G/G']$ has $12$ simple components
whether $K=\B{Q}$ or $K=F$.  Thus $\B{Q}G$ is the direct
sum of $12$ fields and $24$ quaternion algebras
while $FG$ is the direct sum of $12$ fields and $34$ quaternion algebras.

\medskip\noindent (v.b) Suppose $m_1\ge3$, $m_2=1$, $m_3=2$. (By symmetry, the
case $m_1\ge3$, $m_2=2$, $m_3=1$ will yield the same results.)
Here we have $\C{Z}(G)=C_{2^{m_1}}\times C_2\times C_4$, a group with
$2[4(3+m_1-2)-2]=8m_1+4$ cyclic subgroups, $20$ of order at most $4$, so there
are $N_1=8m_1+4$ \B{Q}-classes and $M_1=20+2(8m_1+4-20)=16m_1-12$ $F$-classes.
The group $\langle \C{Z}(G),x\rangle/\langle s\rangle
=\langle\overline{t_1},\overline{x},\overline{t_3}\rangle
\iso C_{2^{m_1}}\times C_4\times C_4$ has
$\tfrac73(16-1)+(m_1-2)2^4+1=16m_1+4$ cyclic subgroups, of which $36$ are generated by elements of order at
most $4$, we have $16m_1+4$ \B{Q}-classes and $36+2(16m_1+4-36)=32m_1-32$ $F$-classes.
Since $\C{Z}(G)/\langle s\rangle\times C_{2^{m_1-1}}\times C_2\times C_4$ has
$2[4(3+m_1-1-2)-2]=8m_1-4$ cyclic subgroups, of which $20$ are generated by elements of order at
most $4$, it has $8m_1-4$ \B{Q}-classes and $20+2(8m_1-4-20)=16m_1-28$ $F$-classes.
By \lemref{lem:xclasses}, there are $N_2=8m_1+8$ and $M_2=16m_1-4$ $F$-classes
involving $x$.
Now $\langle \C{Z}(G),y\rangle/\langle s\rangle\iso C_{2^{m_1-1}}\times C_2\times C_8$ has
$2[8(3+m_1-1-3)-2]=16m_1-20$ cyclic subgroups, $20$ of which are generated by elements of order at most $4$.
This implies $16m_1-20$ \B{Q}-classes and $20+2(16m_1-20-20)=32m_1-60$ $F$-classes,
so there are $N_3=8m_1-16$ \B{Q}-classes and $M_3=16m_1-32$ $F$-classes involving $y$.
Since $(xy)^4=t_3^2$, $\langle \C{Z}(G),xy \rangle/\langle s\rangle
=\langle\overline{t_1},\overline{t_2},\overline{xy}\rangle
\iso C_{2^{m_1-1}}\times C_2 \times C_8$, the numbers $N_4,M_4$
are, respectively, $N_3,M_3$.  In all, this group has $N=N_1+N_2+2N_3=32m_1-20$ \B{Q}-classes
and $M=M_1+M_2+2M_3=64m_1-80$ $F$-classes.

Since $G/G'\iso C_{2^{m_1-1}}\times C_2\times C_4$ has $2[4(3+m_1-1-2)-2]=8m_1-4$
cyclic subgroups, of which $20$ are generated by elements of order at most $4$,
we see that there are $N_0=8m_1-4$ \B{Q}-classes and $M_0=20+2(N_0-20)=16m_1-28$ $F$-classes
in $G/G'$.  It follows that $\B{Q}G$ is the direct
sum of $8m_1-4$ fields and $24m_1-16$ quaternion algebras
while $FG$ is the direct sum of $16m_1-28$ fields and $48m_1-52$ quaternion algebras.

\medskip\noindent (v.c) Suppose $m_1=2$, $m_2=1$, $m_3\ge3$.
Here we have $\C{Z}(G)=C_4\times C_2\times C_{2^{m_3}}$ and $s=t_1^2$.
The centre has $2[4(3+m_3-2)-2]=8m_3+4$ cyclic subgroups, of which $20$
are generated by elements of order at most $4$, so there
are $N_1=8m_3+4$ \B{Q}-classes and $M_1=20+2(N_1-20)=2N_1-20=16m_3-12$ $F$-classes.
The group $\langle \C{Z}(G),x\rangle/\langle s\rangle
=\langle\overline{t_1},\overline{x},\overline{t_3}\rangle
\iso C_2\times C_4\times C_{2^{m_3}}$ has
$2[4(3+m_3-2)-2]=8m_3+4$ cyclic subgroups, of which $20$ are generated by elements of order
at most
$4$, giving $8m_3+4$ \B{Q}-classes and $20+2(8m_3+4-20)=16m_3-12$ $F$-classes.
The group
$\C{Z}(G)/\langle s\rangle\times C_2\times C_2\times C_{2^{m_3}}$ has $4(m_3+1)$ cyclic subgroups,
of which $12$ are generated by elements of order at most $4$,
so there are $4m_3+4$ \B{Q}-classes and $12+2(4m_3+4-12)=8m_3-4$ $F$-classes.
By \lemref{lem:xclasses}, we have $N_2=4m_3$ \B{Q}-classes and $M_2=8m_3-8$ $F$-classes.
Now $\langle \C{Z}(G),y\rangle/\langle s\rangle\iso C_2\times C_2\times C_{2^{m_3+1}}$
and  $\langle \C{Z}(G),xy \rangle/\langle s\rangle
=\langle\overline{t_1},\overline{t_2},\overline{xy}\rangle$ is the same group.
Each has
$4(m_3+2)$ cyclic subgroups, $12$ of which are generated by elements of order at most $4$.
This gives $4m_3+8$ \B{Q}-classes and $12+2(4m_3+8-12)=8m_3+4$ $F$-classes in each case,
hence $N_3=N_4=4$ \B{Q}-classes and $M_3=M_4=8$ $F$-classes involving $y$.
In all, this group has $N=N_1+N_2+2N_3=12m_3+12$ \B{Q}-classes
and $M=M_1+M_2+2M_3=24m_3-4$ $F$-classes.

Since $G/G'\iso C_2\times C_4\times C_{2^{m_3+1}}$
has $N_0=2[4(3+m_3+1-2)-2]=8m_3+12$ cyclic subgroups, of which $20$
are generated by elements of order at most
$4$, this group has $M_0=20+2(N_0-20)=16m_3+4$ $F$-classes.
It follows that $\B{Q}G$ is the direct
sum of $8m_3+12$ fields and $4m_3$ quaternion algebras
while $FG$ is the direct sum of $16m_3+4$ fields and $8m_3-8$ quaternion algebras.

\smallskip\noindent (v.d) Suppose $m_1\ge3$, $m_2=1$ and
$m_3\ge3$. (The numbers when $m_1\ge3$, $m_2\ge3$ and $m_3=1$
are the same, by symmetry.)  There are several subcases.

First suppose $m_1\ge m_3+2$.  We have $\C{Z}(G)=C_{2^{m_1}}\times C_2\times C_{2^{m_3}}$
which contains $N_1=2[2^{m_3}(3+m_1-m_3)-2]=2^{m_3+1}(3+m_1-m_3)-4$ cyclic subgroups,
of which $20$ are generated by elements of order at most $4$.  So we have $N_1$ \B{Q}-classes
and $M_1=20+2(N_1-20)=2^{m_3+2}(3+m_1-m_3)-28$ $F$-classes.
Let $H=\langle \C{Z}(G),x\rangle$. Then $H/\langle s\rangle=\langle\overline{t_1},\overline{x},\overline{t_3}\rangle
\iso C_{2^{m_1-1}}\times C_4\times C_{2^{m_3}}$, a group containing $\tfrac73(15)
+3(4)(2^{m_3}-4)+(m_1-1-m_3)2^{m_3+2}+1=2^{m_3+2}(2+m_1-m_3)-12$ cyclic subgroups, and hence this number
of \B{Q}-classes, $36$ of which correspond to subgroups
generated by elements of order at most $4$.
This gives $2^{m_3+3}(2+m_1-m_3)-60$ $F$-classes in $H/\langle s\rangle$.
Now $\C{Z}(G)/\langle s\rangle\iso C_{2^{m_1-1}}\times C_2\times C_{2^{m_3}}$
has $2^{m_3+1}(3+m_1-1-m_3)-4=2^{m_3+1}(2+m_1-m_3)-4$ cyclic subgroups, and hence this number
of \B{Q}-classes, $20$ of which correspond to subgroups
generated by elements of order at most $4$.
This gives $2^{m_3+2}(2+m_1-m_3)-28$ $F$-classes in $\C{Z}(G)/\langle s\rangle$.
Using \lemref{lem:xclasses}, we get $N_2=2^{m_3+1}(2+m_1-m_3)-8$ \B{Q}-classes
involving $x$ and $M_2=2^{m_3+2}(2+m_1-m_3)-52$ $F$-classes.

Let $H=\langle \C{Z}(G),y\rangle$. Then $H/\langle s\rangle=\langle\overline{t_1},\overline{t_2},\overline{y}\rangle
\iso C_{2^{m_1-1}}\times C_2\times C_{2^{m_3+1}}$, a group containing
$2\{2^{m_3+1}[3+(m_1-1)-(m_2+1)]-2\}=2^{m_3+2}(1+m_1-m_3)-4$ cyclic subgroups, and hence this number
of \B{Q}-classes, $20$ of which correspond to subgroups
that are generated by elements of order at most $4$.
This gives $2^{m_3+3}(1+m_1-m_3)-28$ $F$-classes in $H/\langle s\rangle$.
Using \lemref{lem:xclasses}, we get $N_3=2^{m_3+1}(m_1-m_3)$ \B{Q}-classes
involving $y$ and $M_3=2^{m_3+2}(m_1-m_3)$ $F$-classes.
The situation is the same with $xy$, so the numbers $N_4,M_4$ of \B{Q}- and $F$-classes
involving $xy$ are $N_3,M_3$, respectively.
In all, we have $N=N_1+N_2+2N_3=2^{m_3+1}(5+4m_1-4m_3)-12$ \B{Q}-classes
and $M=M_1+M_2+2M_3=2^{m_3+2}(5+4m_1-4m_3)-80$ $F$-classes.

Finally, we have $G/G'=\langle \overline{t_1},\overline{x},\overline{y}\rangle
\iso C_{2^{m_1-1}}\times C_4\times C_{2^{m_3+1}}$, which has
$N_0=\tfrac73(16-1)+3(4)(2^{m_3+1}-4)+[(m_1-1)-(m_3+1)]2^{2+m_3+1}+1=2^{m_3+3}(1+m_1-m_3)-12$
cyclic subgroups, of which $36$ are generated by elements of order at most $4$, so
the number of \B{Q}-classes in $G/G'$ is $N_0$ and the number of $F$-classes
is $M_0=36+2(N_0-36)=2^{m_3+4}(1+m_1-m_3)-60$.

We find that $\B{Q}G$ is the direct sum of $N_0$ fields and $N-N_0=2^{m_3+1}$
quaternion algebras, while $FG$ is the direct sum of $M_0$ fields and $M-M_0=2^{m_3+2}-20$
quaternion algebras.

\smallskip
Now suppose $m_1=m_3+2$.  We have $\C{Z}(G)=C_{2^{m_3+2}}\times C_2\times C_{2^{m_3}}$
which contains $N_1=2[2^{m_3}(3+m_3+2-m_3)-2]=2^{m_3+1}(5)-4$ cyclic subgroups,
of which $20$ are generated by elements of order at most $4$.  So we have $N_1$ \B{Q}-classes
and $M_1=20+2(N_1-20)=2^{m_3+2}(5)-28$ $F$-classes.
Let $H=\langle \C{Z}(G),x\rangle$. Then $H/\langle s\rangle=\langle\overline{t_1},\overline{x},\overline{t_3}\rangle
\iso C_{2^{m_3+1}}\times C_4\times C_{2^{m_3}}$, a group containing $\tfrac73(15)
+3(4)(2^{m_3}-4)+2^{m_3+2}+1=2^{m_3+3}-12$ cyclic subgroups, and hence this number
of \B{Q}-classes, $36$ corresponding to subgroups generated by elements of order at most $4$.
This gives $2^{m_3+4}-60$ $F$-classes in $H/\langle s\rangle$.
Now $\C{Z}(G)/\langle s\rangle\iso C_{2^{m_3+1}}\times C_2\times C_{2^{m_3}}$
has $2^{m_3+3}-4$ cyclic subgroups, and hence this number
of \B{Q}-classes, $20$ corresponding to subgroups generated by elements of order at most $4$.
This gives $2^{m_3+4}-28$ $F$-classes in $\C{Z}(G)/\langle s\rangle$.
Using \lemref{lem:xclasses}, we get $N_2=2^{m_3+3}-8$ \B{Q}-classes
involving $x$ and $M_2=2^{m_3+4}-52$ $F$-classes.

Let $H=\langle \C{Z}(G),y\rangle$. Then $H/\langle s\rangle=\langle\overline{t_1},\overline{t_2},\overline{y}\rangle
\iso C_{2^{m_3+1}}\times C_2\times C_{2^{m_3+1}}$, a group containing
$6(2^{m_3+1})-4=3(2^{m_3+2})-4$ cyclic subgroups, and hence this number
of \B{Q}-classes, $20$ of which correspond to subgroups
generated by elements of order at most $4$.
This gives $3(2^{m_3+3})-28$ $F$-classes in $H/\langle s\rangle$.
Using \lemref{lem:xclasses}, we get $N_3=2^{m_3+2}$ \B{Q}-classes
involving $y$ and $M_3=2^{m_3+3}$ $F$-classes.
The situation is the same with $xy$, so the numbers $N_4,M_4$ of \B{Q}- and $F$-classes,
involving $xy$, respectively, are the same as $N_3,M_3$.
In all, we have $N=N_1+N_2+2N_3=13(2^{m_3+1})-12$ \B{Q}-classes
and $M=M_1+M_2+2M_3=13(2^{m_3+2})-80$ $F$-classes.

Finally, we have $G/G'=\langle \overline{t_1},\overline{x},\overline{y}\rangle
\iso C_{2^{m_3+1}}\times C_4\times C_{2^{m_3+1}}$, which has
$N_0=\tfrac73(16-1)+3(4)(2^{m_3+1}-4)+1=12(2^{m_3+1})-12$
cyclic subgroups, of which $36$ are generated by elements of order at most $4$, so
the number of \B{Q}-classes in $G/G'$ is $N_0$ and the number of $F$-classes
is $M_0=36+2(N_0-36)=24(2^{m_3+1})-60$.

We find the $\B{Q}G$ is the direct sum of $N_0$ fields and $N-N_0=2^{m_3+1}$
quaternion algebras, while $FG$ is the direct sum of $M_0$ fields and $M-M_0=2^{m_3+2}-20$
quaternion algebras.

\smallskip\par
Finally, suppose $m_3\ge m_1$.
We have $\C{Z}(G)=C_{2^{m_1}}\times C_2\times C_{2^{m_3}}$
which contains $N_1=2^{m_1+1}(3+m_3-m_1)-4$ cyclic subgroups,
of which $20$ are generated by elements of order at most $4$.  So we have $N_1$ \B{Q}-classes
and $M_1=20+2(N_1-20)=2^{m_1+2}(3+m_3-m_1)-28$ $F$-classes.
Let $H=\langle \C{Z}(G),x\rangle$.
Then $H/\langle s\rangle=\langle\overline{t_1},\overline{x},\overline{t_3}\rangle
\iso C_{2^{m_1-1}}\times C_4\times C_{2^{m_3}}$, a group containing
$2^{m_1+1}(4+m_3-m_2)-12$ cyclic subgroups, and hence this number
of \B{Q}-classes, $36$ corresponding to subgroups generated by elements of order at most $4$.
This gives $2^{m_1+2}(4+m_3-m_1)-60$ $F$-classes in $H/\langle s\rangle$.
Now $\C{Z}(G)/\langle s\rangle\iso C_{2^{m_1-1}}\times C_2\times C_{2^{m_3}}$
has $2^{m_1}(4+m_3-m_1)-4$ cyclic subgroups, and hence this number
of \B{Q}-classes, $20$ of which correspond to subgroups
that are generated by elements of order at most $4$.
This gives $2^{m_1+1}(4+m_3-m_1)-28$ $F$-classes in $\C{Z}(G)/\langle s\rangle$.
Using \lemref{lem:xclasses}, we get $N_2=2^{m_1}(4+m_3-m_1)-8$ \B{Q}-classes
involving $x$ and $M_2=2^{m_1+1}(4+m_3-m_1)-32$ $F$-classes.

Let $H=\langle \C{Z}(G),y\rangle$. Then $H/\langle s\rangle=\langle\overline{t_1},\overline{t_2},\overline{y}\rangle
\iso C_{2^{m_1-1}}\times C_2\times C_{2^{m_3+1}}$, a group containing
$2^{m_1}(5+m_3-m_1)-4$ cyclic subgroups, and hence this number
of \B{Q}-classes, $20$ corresponding to subgroups generated by elements of order at most $4$.
This gives $2^{m_1+1}(5+m_3-m_1)-28$ $F$-classes in $H/\langle s\rangle$.
Using \lemref{lem:xclasses}, we get $N_3=2^{m_1}$ \B{Q}-classes
involving $y$ and $M_3=2^{m_1+1}$ $F$-classes.
The situation is the same with $xy$, so the numbers $N_4,M_4$ of \B{Q}- and $F$-classes,
involving $xy$ are, respectively, $N_3,M_3$.
In all, we have $N=N_1+N_2+2N_3=2^{m_1}(12+3m_1-3m_3)-12$ \B{Q}-classes
and $M=M_1+M_2+2M_3=2^{m_1+1}(12+3m_1-3m_3)-60$ $F$-classes.

Finally, we have $G/G'=\langle \overline{t_1},\overline{x},\overline{y}\rangle
\iso C_{2^{m_1-1}}\times C_4\times C_{2^{m_3+1}}$, which has
$N_0=2^{m_1+1}(5+m_3-m_1)-12$
cyclic subgroups, of which $36$ are generated by elements of order at most $4$, so
the number of \B{Q}-classes in $G/G'$ is $N_0$ and the number of $F$-classes
is $M_0=36+2(N_0-36)=2^{m_1+2}(5+m_3-m_1)-60$.

We find the $\B{Q}G$ is the direct sum of $N_0$ fields and $N-N_0=2^{m_1}(2+m_3-m_1)$
quaternion algebras, while $FG$ is the direct sum of $M_0$ fields and $M-M_0=2^{m_1+1}(2+m_3-m_1)$
quaternion algebras.

\medskip\noindent (vi.a) Suppose $m_1=m_2=m_3=2$.
Thus $s=t_1^2$ and $\C{Z}(G)\iso C_4\times C_4\times C_4$ has $36$ cyclic subgroups all
of order at most $4$, so there are $36$ $K$-classes in the centre whether $K=\B{Q}$
or a field $F$ of order $q\equiv3\pmod8$.  The
groups $\langle \C{Z}(G),x \rangle/\langle s\rangle
=\langle \overline{t_1},\overline{x},\overline{t_3}\rangle$,
$\langle \C{Z}(G),y \rangle/\langle s\rangle
=\langle\overline{t_1},\overline{t_2},\overline{y}\rangle$
and $\langle \C{Z}(G),xy \rangle/\langle s\rangle
=\langle\overline{t_1},\overline{t_2},\overline{xy}\rangle$,
each being isomorphic to $C_2\times C_8\times C_4$, have $28$ cyclic subgroups each,
$20$ of order at most $4$.  So each has $28$ \B{Q}-classes and $20+2(8)=36$ $F$-classes.
The group
$\C{Z}(G)/\langle s\rangle \iso C_2 \times C_4\times C_4$ has $20$ $K$-classes,
whether $K=\B{Q}$ or $K=F$, so
there are eight \B{Q}-classes involving each of $x$, $y$, $xy$
and $16$ $F$-classes.  In all, the group algebra $\B{Q}G$
in this case has $36+3(8)=60$ simple components
while $FG$ has $36+3(16)=84$.

Also $G/G'\iso C_2\times C_8\times C_8$ has $44$ cyclic subgroups, $20$ of order
at most $4$, so the commutative part of $\B{Q}G$ is the direct sum
of $44$ fields and the commutative part of $FG$ is the direct sum
of $20+2(24)=68$ fields.  Thus
$\B{Q}G$ is the sum of $44$ fields and $16$ quaternion algebras
while $FG$ is the sum of $68$ fields and $16$ quaternion algebras.

\medskip\noindent (vi.b) Assume $m_1=2$, $m_2\ge3$ and $m_3=2$.  (By symmetry,
the case $m_1=2$, $m_2=2$, $m_3\ge3$ will yield the same results.)
We have $\C{Z}(G)= C_4\times C_{2^{m_2}}\times C_4$
with $N_1=\tfrac73(4^2-1)+(m_2-2)2^{2+2}+1=16m_2+4$
cyclic subgroups, $36$ of which are generated by elements of order at most $4$, so
there are $N_1$ \B{Q}-classes and $M_1=20+2(N_1-20)=32m_2-28$ $F$-classes.

Let $H=\langle \C{Z}(G),x\rangle$.  Then $H/\langle s\rangle=\langle\overline{t_1},\overline{x},\overline{t_3}\rangle
\iso C_2\times C_{2^{m_2+1}}\times C_4$ has $2[4(3+m_2+1-2)-2]=8m_2+12$
cyclic subgroups (and hence \B{Q}-classes),
of which $20$
are generated by elements of order at most $4$, so that are $16m_2+4$ $F$-classes here.
We have $\C{Z}(G)/\langle s\rangle\iso C_2\times C_{2^{m_2}}\times C_4$
with $8m_2+4$
cyclic subgroups (and hence \B{Q}-classes), of which $20$ are generated by elements of order at most $4$, giving
$16m_2-12$ $F$-classes.  With the help of \lemref{lem:xclasses},
we learn that there are $N_2=8$ \B{Q}-classes and $M_2=16$
$F$-classes involving~$x$.

Let $H=\langle \C{Z}(G),y\rangle$.  Then
$H/\langle s\rangle=\langle\overline{t_1},\overline{t_2},\overline{y}\rangle
\iso C_2\times C_{2^{m_2}}\times C_8$ has $16m_2-4$ cyclic subgroups (and hence \B{Q}-classes),
of which $20$
are generated by elements of order at most $4$, so that are $32m_2-28$ $F$-classes here.
From \lemref{lem:xclasses}, we discover $N_3=8m_2-8$ and $M_3=16m_2-16$.
The situation with $xy$ is similar.  The numbers $N_4,M_4$ are $N_3,M_3$, respectively.
In all we have $N=N_1+N_2+2N_3=32m_2-4$ \B{Q}-classes and
$M=M_1+M_2+2M_3=64m_2-44$ $F$-classes.

Now $G/G'\iso C_2\times C_{2^{m_2+1}}\times C_8$, so previous calculations
give $N_0=16m_2+14$ \B{Q}-classes and $M_0=32m_2+8$ $F$-classes.
In this situation, $\B{Q}G$ is the direct sum
of $N_0$ fields and $N-N_0=16m_2-18$ quaternion algebras
and $FG$ is the direct sum of $M_0$ fields and $M-M_0=32m_2-52$ quaternion
algebras.

\medskip\noindent (vi.c) Assume $m_1\ge3$, $m_2=2$ and $m_3\ge2$.  (The case $m_1\ge3$,
$m_2\ge3$, $m_3=2$ is the same, by symmetry.)

First suppose $m_1\ge m_3+2$.  We have $\C{Z}(G)= C_{2^{m_1}}\times C_4\times C_{2^{m_3}}$
with $N_1=\tfrac73(2^4-1)+3(4)(2^{m_3}-4)+(m_1-m_3)2^{m_3+2}+1=2^{m_3+2}(3+m_1-m_3)-12$
cyclic subgroups, $36$ of which are generated by elements of order at most $4$, so
there are $N_1$ \B{Q}-classes and $M_1=36+2(N_1-36)=2^{m_3+3}(3+m_1-m_3)-48$ $F$-classes.

Let $H=\langle \C{Z}(G),x\rangle$.  Then $H/\langle s\rangle=\langle\overline{t_1},\overline{x},\overline{t_3}\rangle
\iso C_{2^{m_1-1}}\times C_8\times C_{2^{m_3}}$ has $\tfrac73(4^3-1)+3(8)(2^{m_2}-8)
+(m_1-1-m_3)2^{m_3+3}+1=(2+m_1-m_3)2^{m_3+3}-44$ cyclic subgroups (and hence \B{Q}-classes),
of which $36$
are generated by elements of order at most $4$, so that are $2^{m_3+4}(2+m_1-m_3)-124$ $F$-classes here.
We have $\C{Z}(G)/\langle s\rangle\iso C_{2^{m_1-1}}\times C_4\times C_{2^{m_3}}$
with $\tfrac73(4^2-1)+3(4)(2^{m_3}-4)+(m_1-1-m_3)2^{m_3+2}+1=2^{m_3+2}(2+m_1-m_3)-12$
cyclic subgroups (and hence \B{Q}-classes), of which $36$ are generated by elements of order at most $4$, giving
$2^{m_3+3}(2+m_1-m_3)-60$ $F$-classes.  With the help of \lemref{lem:xclasses},
we obtain $N_2=2^{m_3+2}(2+m_1-m_3)-32$ \B{Q}-classes and $M_2=2^{m_3+3}(2+m_1-m_3)-64$
$F$-classes involving~$x$.

Let $H=\langle \C{Z}(G),y\rangle$.  Then
$H/\langle s\rangle=\langle\overline{t_1},\overline{t_2},\overline{y}\rangle
\iso C_{2^{m_1-1}}\times C_4\times C_{2^{m_3+1}}$ has $\tfrac73(4^2-1)+3(4)(2^{m_3+1}-4)
+[(m_1-1)-(m_3+1)]2^{m_3+1+2}+1=(1+m_1-m_3)2^{m_3+3}-12$ cyclic subgroups (and hence \B{Q}-classes),
of which $36$
are generated by elements of order at most $4$, so that are $2^{m_3+4}(1+m_1-m_3)-60$ $F$-classes here.
By \lemref{lem:xclasses},
we have $N_3=2^{m_3+2}(m_1-m_3)$ \B{Q}-classes and $M_3=2^{m_3+3}(m_1-m_3)$
$F$-classes involving $y$.
The situation with $xy$ is similar.  The numbers $N_4,M_4$ are $N_3,M_3$, respectively.
In all we have $N=N_1+N_2+2N_3=2^{m_3+2}(5+4m_1-4m_3)-44$ \B{Q}-classes and
$M=M_1+M_2+2M_3=2^{m_3+3}(5+4m_1-4m_3)-112$ $F$-classes.

Now $G/G'\iso C_{2^{m_1-1}}\times C_8\times C_{2^{m_3}}$, so previous calculations
give $N_0=2^{m_3+3}(2+m_1-m_3)-44$ \B{Q}-classes and $M_0=2^{m_3+4}(2+m_1-m_3)-124$ $F$-classes.
In this situation, we see that $\B{Q}G$ is the direct sum
of $N_0$ fields and $N-N_0=2^{m_3+2}(1+2m_1-2m_3)$ quaternion algebras
and $FG$ is the direct sum of $M_0$ fields and $M-M_0=2^{m_3+3}(1+2m_1-2m_3)+12$ quaternion
algebras.

\smallskip  Now assume $m_1=m_3+1$.
We have $\C{Z}(G)= C_{2^{m_3+1}}\times C_4\times C_{2^{m_3}}$
with $N_1=2^{m_3+4}-12$
cyclic subgroups, $36$ of which are generated by elements of order at most $4$, so
there are $N_1$ \B{Q}-classes and $M_1=36+2(N_1-36)=2^{m_3+5}-48$ $F$-classes.

Let $H=\langle \C{Z}(G),x\rangle$.  Then $H/\langle s\rangle=\langle\overline{t_1},\overline{x},\overline{t_3}\rangle
\iso C_{2^{m_3}}\times C_8\times C_{2^{m_3}}$ has $(3)2^{m_3+3}-44$ cyclic subgroups
(and hence \B{Q}-classes), of which $36$
are generated by elements of order at most $4$, so that are $2^{m_3+4}(3)-124$ $F$-classes here.
We have $\C{Z}(G)/\langle s\rangle\iso C_{2^{m_3}}\times C_4\times C_{2^{m_3}}$
with $2^{m_3+2}(3)-12$
cyclic subgroups (and hence \B{Q}-classes), of which $36$ are generated by elements of order at most $4$, giving
$2^{m_3+3}(3)-60$ $F$-classes.  With the help of \lemref{lem:xclasses},
we have $N_2=3(2^{m_3+2})-32$ \B{Q}-classes and $M_2=3(2^{m_3+3})-64$
$F$-classes involving~$x$.

Let $H=\langle \C{Z}(G),y\rangle$.  Then
$H/\langle s\rangle=\langle\overline{t_1},\overline{t_2},\overline{y}\rangle
\iso C_{2^{m_3}}\times C_4\times C_{2^{m_3+1}}$ has $2^{m_3+4}-12$ cyclic subgroups (and hence \B{Q}-classes),
of which $36$
are generated by elements of order at most $4$, so that are $2^{m_3+5}-60$ $F$-classes here.
By \lemref{lem:xclasses},
we have $N_3=2^{m_3+2}$ \B{Q}-classes and $M_3=2^{m_3+3}$
$F$-classes involving $y$.
The situation with $xy$ is similar.  The numbers $N_4,M_4$ are $N_3,M_3$, respectively.
In all we have $N=N_1+N_2+2N_3=9(2^{m_3+2})-44$ \B{Q}-classes and
$M=M_1+M_2+2M_3=9(2^{m_3+3})-112$ $F$-classes.

Now $G/G'\iso C_{2^{m_3}}\times C_8\times C_{2^{m_3}}$, so previous calculations
give $N_0=3(2^{m_3+3})-44$ \B{Q}-classes and $M_0=3(2^{m_3+4})-124$ $F$-classes.
In this situation, we see that $\B{Q}G$ is the direct sum
of $N_0$ fields and $N-N_0=3(2^{m_3+2})$ quaternion algebras,
and $FG$ the direct sum of $M_0$ fields and $M-M_0=3(2^{m_3+3})+12$ quaternion
algebras.

\smallskip Finally assume $m_1\le m_3$.
We have $\C{Z}(G)=C_{2^{m_1}}\times C_4\times C_{2^{m_3}}$
with $N_1=2^{m_1+2}(3+m_3-m_1)-12$
cyclic subgroups, $36$ of which are generated by elements of order at most $4$, so
there are $N_1$ \B{Q}-classes and $M_1=2^{m_1+3}(3+m_3-m_1)-60$ $F$-classes.

Let $H=\langle \C{Z}(G),x\rangle$.  Then $H/\langle s\rangle
=\langle\overline{t_1},\overline{x},\overline{t_3}\rangle
\iso C_{2^{m_1-1}}\times C_8\times C_{2^{m_3}}$ has $(4+m_3-m_1)2^{m_1+2}-44$
cyclic subgroups (and hence \B{Q}-classes), of which $36$
are generated by elements of order at most $4$, so that are $2^{m_1+3}(4+m_3-m_1)-124$ $F$-classes here.
We have $\C{Z}(G)/\langle s\rangle\iso C_{2^{m_1-1}}\times C_4\times C_{2^{m_3}}$
with $2^{m_1+1}(4+m_3-m_1)-12$
cyclic subgroups (and hence \B{Q}-classes), of which $36$ are generated by elements of order at most $4$, giving
$2^{m_1+2}(4+m_3-m_1)-60$ $F$-classes.  With the help of \lemref{lem:xclasses},
we discover $N_2=2^{m_1+1}(4+m_3-m_1)-32$ \B{Q}-classes and $M_2=2^{m_1+2}(4+m_3-m_1)-64$
$F$-classes involving~$x$.

Let $H=\langle \C{Z}(G),y\rangle$.  Then
$H/\langle s\rangle=\langle\overline{t_1},\overline{t_2},\overline{y}\rangle
\iso C_{2^{m_1-1}}\times C_4\times C_{2^{m_3+1}}$ has $2^{m_1+1}(5+m_3-m_1)-12$
cyclic subgroups (and hence \B{Q}-classes), of which $36$
are generated by elements of order at most $4$, so that are $2^{m_1+2}(5+m_3-m_1)-60$ $F$-classes here.
By \lemref{lem:xclasses},
we obtain $N_3=2^{m_1+1}$ \B{Q}-classes and $M_3=2^{m_1+2}$
$F$-classes involving $y$.
The situation with $xy$ is similar.  The numbers $N_4,M_4$ are $N_3,M_3$, respectively.
In all we have $N=N_1+N_2+2N_3=2^{m_1+1}(12+3m_1-3m_3)-48$ \B{Q}-classes and
$M=M_1+M_2+2M_3=2^{m_1+2}(12+3m_3-3m_1)-124$ $F$-classes.

Now $G/G'\iso C_{2^{m_1-1}}\times C_8\times C_{2^{m_3}}$, so previous calculations
give $N_0=2^{m_1+1}(8+m_3-m_1)-44$ \B{Q}-classes and $M_0=2^{m_1+2}(8+m_3-m_1)-124$ $F$-classes.
In this situation, we see that $\B{Q}G$ is the direct sum
of $N_0$ fields and $N-N_0=2^{m_1+1}(4+2m_3-2m_1)-4$ quaternion algebras,
and $FG$ the direct sum of $M_0$ fields and $M-M_0=2^{m_1+2}(4+2m_3-2m_1)$ quaternion
algebras.

\smallskip

Turning to the general situation, we now assume that $G$ is a group
from class $\C{D}_5$ with $m_1\ge3$, $m_2\ge3$ and $m_3\ge3$.
\lemref{lem:3cyclics} allows to us to determine the number $N_1$  of cyclic
subgroups of $\C{Z}(G)$, which is also the number of \B{Q}-classes.
Since the centre contains $36$ subgroups
generated by elements of order at most $4$  (subgroups lying in a copy
of $C_4\times C_4\times C_4$), the number of $F$-classes in the
centre is $M_1=36+2(N_1-36)=2N_1-36$.

Let $\alpha$ be the number of cyclic subgroups in $\C{Z}(G)/\langle s\rangle
\iso C_{2^{m_1-1}}\times C_{2^{m_2}}\times C_{2^{m_3}}$.
Then this quotient contains $\alpha$ \B{Q}-classes and $36+2(\alpha-36)=2\alpha-36$
$F$-classes.  (Here is where the fact $m_1-1\ge2$ is crucial.)
Let $H=\langle \C{Z}(G),x\rangle=\langle t_1,x,t_3\rangle$.
and let $\beta$ be the number
of cyclic subgroups of
$H/\langle s\rangle\iso C_{2^{m_1-1}}\times C_{2^{m_2+1}}\times C_{2^{m_3}}$.
Thus $H/\langle s\rangle$ has $\beta$ \B{Q}-classes and $2\beta-36$ $F$-classes.
\lemref{lem:xclasses} then tells us there are $N_2=\beta-\alpha$
and $M_2=(2\beta-36)-(2\alpha-36)=2N_2$ $F$-classes involving~$x$.

Similarly, letting
$N_3$
and $N_4$  denote the numbers of \B{Q}-classes involving $y$ and $xy$, respectively
and as usual, we have $M_3=2N_3$ and $M_4=2N_4$ $F$-classes involving $x$
and $y$, respectively.
So $\B{Q}G$ is the direct sum of $N=N_1+N_2+N_3+N_4$ simple algebras, while
$FG$ is the direct sum of $M=M_1+M_2+M_3+M_4=(2N_1-36)+2N_2+2N_3+2N_4=2N-36$
simple algebras.

Let $N_0$ be the number of cyclic subgroups of $G/G'
=\langle\overline{t_1}\rangle\times\langle\overline{x}\rangle\times\langle\overline{y}\rangle
\iso C_{2^{m_1-1}}\times C_{2^{m_2+1}}\times C_{2^{m_3+1}}$.
Thus $G/G'$ has $N_0$ \B{Q}-classes and $M_0=2N_0-36$ $F$-classes
We conclude that
$\B{Q}G$ is the direct sum of $N_0$ fields and $N-N_0$ quaternion
algebras, while $FG$ is the direct sum of $M_0=2N_0-36$ fields
and $M-M_0=2(N-N_0)$ quaternion algebras.

\begin{thm} Let $G$ be a group of type $\C{D}_5$ with $m_1\ge3$, $m_2\ge3$,
$m_3\ge3$.   Denote by $N_1$, $N_2$,
$N_3$,  $N_4$ the number of cyclic subgroups of \C{Z}(G) and the numbers
of \B{Q}-classes involving $x$, $y$ and $xy$, respectively.  (Note that these numbers
can be determined by \lemref{lem:3cyclics} and \lemref{lem:xclasses}.)
Let $N=N_1+N_2+N_3+N_4$ and let $N_0$ be the number of cyclic subgroups of $G/G'$,
a number that is also readily available using \lemref{lem:3cyclics}.
Then $\B{Q}G$ is the direct sum of $N_0$ fields and $N-N_0$ quaternion algebras.
Let $K$ be a finite field of odd order $q$.  Then there are at least
$2N-36$ simple components in the Wedderburn decomposition of $KG$.
This minimal number is realized if $q\equiv3\pmod8$, in which case
$KG$ is the direct sum of $2N_0-36$  fields and $2(N-N_0)$
quaternion algebras, each necessarily a $2\times 2$ matrix ring.
\end{thm}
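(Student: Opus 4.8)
The plan is to follow exactly the template used for types $\C{D}_1$ through $\C{D}_4$: count \B{Q}- and $F$-classes via the Witt--Berman correspondence, then split each group algebra into its commutative and noncommutative parts. First I would invoke the fact that the number of simple components of $KG$ equals the number of $K$-classes of $G$. Since $G'=\{1,s\}$ is central of order two, the conjugacy classes are the central singletons together with the pairs $\{w,sw\}$, $w\notin\C{Z}(G)$; moreover each noncentral class involves exactly one of $x$, $y$, $xy$ (because $x^2,y^2,(xy)^2$ are all central, so odd powers preserve the relevant factor), and this partition is respected by both \B{Q}- and $K$-conjugacy. Thus the \B{Q}-classes of $G$ split as $N=N_1+N_2+N_3+N_4$, where $N_1$ counts the central classes---equivalently, as in \secref{sec:d5}, the cyclic subgroups of $\C{Z}(G)$, computable by \lemref{lem:3cyclics}---and $N_2,N_3,N_4$ count the classes involving $x$, $y$, $xy$; each of these three is obtained from \lemref{lem:xclasses} as the difference between the number of \B{Q}-classes of $H/\langle s\rangle$ and of $\C{Z}(G)/\langle s\rangle$, where $H=\langle \C{Z}(G),w\rangle$.

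Next I would handle the commutative part. By the decomposition $KG\iso K[G/G']\oplus\Delta(G,G')$ cited earlier, with $K[G/G']$ a direct sum of fields and $\Delta(G,G')$ a direct sum of quaternion algebras, it suffices to count the simple components of $K[G/G']$. Since $G/G'\iso C_{2^{m_1-1}}\times C_{2^{m_2+1}}\times C_{2^{m_3+1}}$ is abelian, $\B{Q}[G/G']$ has one component per cyclic subgroup, giving $N_0$ fields; the number of quaternion algebras in $\B{Q}G$ is therefore $N-N_0$, establishing the rational claim.

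For a finite field $F$ of order $q\equiv3\pmod8$, the crucial observation is that the hypothesis $m_1,m_2,m_3\ge3$ forces every group occurring above---$\C{Z}(G)$, $\C{Z}(G)/\langle s\rangle$, each $H/\langle s\rangle$, and $G/G'$---to contain a copy of $C_4\times C_4\times C_4$ (here $m_1-1\ge2$ is what matters), so that in each case exactly $36$ of its cyclic subgroups are generated by elements of order at most $4$. By \lemref{lem1}, these $36$ \B{Q}-classes remain $F$-classes while every other \B{Q}-class splits into precisely two, so any such quotient with $\gamma$ cyclic subgroups contributes $2\gamma-36$ $F$-classes. Hence $M_1=2N_1-36$ and $M_0=2N_0-36$, while the constant $-36$ cancels in the \lemref{lem:xclasses} differences to give $M_2=2N_2$, $M_3=2N_3$ and $M_4=2N_4$. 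Summing yields $M=2N-36$ $F$-classes in all, of which $M_0=2N_0-36$ correspond to commutative components, so $\Delta(G,G')$ comprises $M-M_0=2(N-N_0)$ quaternion algebras.

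Finally I would address minimality and the matrix-ring assertion. For a general finite field $K$ of odd order $q$, \lemref{lem1} shows each \B{Q}-class splits into at least as many $K$-classes as $F$-classes, with equality exactly when $q\equiv3\pmod8$; since $M=2N-36$ was computed under that congruence, this is the minimal number of components. That each quaternion algebra over the finite field $K$ is a ring of $2\times2$ matrices follows from Wedderburn's theorem on finite division rings: such an algebra is either a division ring or $M_2(K)$, and no noncommutative finite division ring exists. The one point demanding care is the uniform appearance of the number $36$; I expect the main obstacle to be verifying that $m_1\ge3$ (and not merely $m_i\ge2$) is precisely what guarantees $C_4\times C_4\times C_4$ still sits inside $\C{Z}(G)/\langle s\rangle$ after the order of $t_1$ drops by one, so that the $-36$ is genuinely the same constant in every term and cancels as claimed.
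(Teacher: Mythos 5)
Your proposal is correct and follows essentially the same route as the paper: counting \B{Q}- and $F$-classes via the centre and the three quotients $\langle\C{Z}(G),w\rangle/\langle s\rangle$, using the constant $36$ from the copy of $C_4\times C_4\times C_4$ (with $m_1-1\ge2$ being exactly the point the paper flags as crucial), and letting the $-36$ cancel in the \lemref{lem:xclasses} differences so that $M=2N-36$ and $M_0=2N_0-36$. The only cosmetic difference is that you make the appeal to Wedderburn's little theorem for the $2\times2$ matrix-ring claim explicit, which the paper leaves implicit.
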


\section{The Structure of a Semisimple Finite-dimensional Alternative Loop Algebra}
An \emph{alternative ring}
is a ring that satisfies the right and left alternative laws, $(yx)x=yx^2$ and
$x(xy)=x^2y$.
A \emph{Moufang loop} is a loop that satisfies
the (right) Moufang identity $[(xy)z]y=x[y(zy)]$.
Any group is a Moufang loop and, in some respects, Moufang
loops in general are not far removed from groups.  They
are, for example, \emph{diassociative} (the subloop generated
by any two elements is always associative), and there are
a number of constructions of Moufang loops which consist
of ``doubling'' a group in various ways.  We describe
one of importance here.

Let $G$ be a nonabelian group with an involution $g\mapsto g^*$
(an antiautomorphism of order two) which is such that
$gg^*\in\C{Z}(G)$, the centre of $G$, for all $g\in G$.
Let $g_0\in\C{Z}(G)$ be an element fixed by $*$ and let $u$ be
an element not in $G$.  Extend the product in $G$ to the
set $L=G\cup Gu$ by means of the rules
\begin{align*}
g(hu) &= (hg)u \\
(gu)h &= (gh^*)u \\
(gu)(hu) &= g_0h^*g
\end{align*}
for $g,h\in G$.
Then $L$ is a Moufang loop denoted $M(G,*,g_0)$.
If $G/\C{Z}(G)\iso C_2\times C_2$,  then
the commutator subgroup $G'=\{1,s\}$
is central of order two and  the involution on $G$ is given by
\begin{equation}\label{eq:star}
g^* = \begin{cases}
         g & \text{if $g\in\C{Z}(G)$} \\
       sg & \text{if $g\notin\C{Z}(G)$.}
      \end{cases}
\end{equation}
In this situation,
the loop $L=M(G,*,g_0)$ is \emph{RA (ring alternative)} by which we mean
that over any (commutative associative) coefficient ring $R$
(with $1$), the loop ring $RL$ is alternative, but not associative.
In fact, this construction accounts for all RA loops
\cite[Theorem IV.3.1]{EGG:96}.
In any RA loop $M(G,*,g_0)$, the unique nonidentity
commutator of $G$ is a unique nonidentity commutator and
a unique nonidentity associator,
which we consistently label~$s$.

It transpires that there are exactly seven classes of indecomposable finite RA loops
\cite[Theorem V.3.1]{EGG:96}.  They are described in \tabref{tab:indecomps}.
In six of these classes, the groups defining
the loops come from one of the five classes
$\C{D}_1,\C{D}_2,\C{D}_3,\C{D}_4,\C{D}_5$ introduced in \secref{sec:intro}.
In the seventh class, the groups
are the direct products of a group in $\C{D}_5$ with a cyclic group.

\begin{table}
\footnotesize
\caption{The Seven Classes of Indecomposable RA Loops \label{tab:indecomps}}
\begin{equation*}
\begin{array}{|c|c|c|c|c|c|} \hline\hline
\text{\emph{Loop Class}} & \text{\emph{Centre}} & x^2 & y^2 & \text{\emph{Group Class}} & u^2=g_0 \\ \hline
\C{L}_1 & \langle t_1\rangle & 1 & 1  &  \C{D}_1  &   1 \\
\C{L}_2 & \langle t_1\rangle & t_1 & t_1 &  \C{D}_2  &   t_1 \\
\C{L}_3 & \langle t_1\rangle\times \langle t_2\rangle & 1 & t_2 &  \C{D}_3  &  1 \\
\C{L}_4 & \langle t_1\rangle\times \langle t_2\rangle & t_1 & t_2 &  \C{D}_4 &  t_1 \\
\C{L}_5 & \langle t_1\rangle\times \langle t_2\rangle\times \langle  t_3 \rangle
& t_2 & t_3 &  \C{D}_5 &   1 \\
\C{L}_6 & \langle t_1\rangle\times \langle t_2\rangle\times \langle  t_3\rangle
& t_2 & t_3 &  \C{D}_5 &   t_1  \\
\C{L}_7 & \langle t_1\rangle\times \langle t_2\rangle\times \langle  t_3\rangle\times \langle  t_4\rangle
& t_2 & t_3 &  \C{D}_5 \times \langle t_4\rangle  & t_4  \\ \hline\hline
\end{array}
\end{equation*}
\end{table}

Our goal now is to show how to apply the results of previous
sections in order to determine the number and nature of the simple components of $KL$,
$K$ a field of characteristic different from $2$, in many situations.
\medskip

Let $L=M(G,*,g_0)$ be an RA loop and let
$R$ denote a commutative, associative coefficient ring with $1$.  Then every
element of the loop ring $RL$ can be written $x+yu$ with $x,y\in RG$: we write
$RL=RG+RGu$.  The involution on $G$ extends to $RG$ via $(\sum\alpha_g g)^*
=\sum\alpha_g g^*$, and multiplication in $RL$ is given by
\begin{equation}\label{eq1}
(a+bu)(x+yu)=(ax+g_0y^*b)+(ya+bx^*)u
\end{equation}
for $a,b,x,y\in RG$.  Defining $\pi_G(x+yu)=x$ and $\pi_u(x+yu)=y$ and also, for
any subset $S$ of $RL$, $\pi_G(S)$ and $\pi_u(S)$ in the obvious way, we have
the following elementary result.

\begin{prop}\label{prop1} If $I$ is an ideal of $RL$, then $\pi_G(I)=\pi_u(I)$
is an ideal of $RG$ which is invariant under $*$.   Conversely, if $J$ is an
ideal of $RG$ which is invariant under $*$, then $I=J+Ju$ is an ideal of $RL$.
\end{prop}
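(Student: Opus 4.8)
The plan is to reduce everything to the explicit multiplication rule~\eqref{eq1}, together with two observations: that $g_0\in\C{Z}(G)$ is a group element, hence a unit of $RG$ (with inverse $g_0^{-1}\in G\subseteq RG$), and that although $RL$ is only alternative, every computation below involves a single multiplication, so I may treat left and right multiplication separately and, by bilinearity of the product, reduce all closure statements to the generating elements $a$ and $bu$ with $a,b\in RG$. Specializing \eqref{eq1} records the products I will need: for $\xi=x+yu$ and $a,b\in RG$ one has $a\xi=ax+(ya)u$, $\xi a=xa+(ya^*)u$, $(bu)\xi=g_0y^*b+(bx^*)u$, and $\xi(bu)=g_0b^*y+(bx)u$; in particular $\xi u=g_0y+xu$ and $u\xi=g_0y^*+x^*u$.

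For the first direction I would begin by showing $J:=\pi_G(I)$ is a two-sided ideal of $RG$. Additivity is immediate since $\pi_G$ is additive and $I$ is an additive subgroup. Given $x\in J$, pick $\xi=x+yu\in I$; then $a\xi=ax+(ya)u\in I$ and $\xi a=xa+(ya^*)u\in I$ force $ax,xa\in J$, so $J$ is an ideal. Next I would prove $\pi_G(I)=\pi_u(I)$. Right multiplication by $u$ gives $\xi u=g_0y+xu\in I$, whose $u$-component is $x$, yielding $\pi_G(I)\subseteq\pi_u(I)$. For the reverse, the $G$-component of $\xi u$ shows $g_0y\in J$ for every $y\in\pi_u(I)$, and since $J$ is now known to be an ideal and $g_0$ is a unit, $y=g_0^{-1}(g_0y)\in J$. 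Finally, $*$-invariance comes from left multiplication: $u\xi=g_0y^*+x^*u\in I$ has $u$-component $x^*$, so $x\in J$ implies $x^*\in\pi_u(I)=J$.

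For the converse, given a $*$-invariant ideal $J$ and $I=J+Ju$, I would verify that the four displayed products stay in $I$ whenever $x,y\in J$. Left and right multiplication by $a\in RG$ are immediate since $J$ is a two-sided ideal. Multiplication by $bu$ is where the hypothesis is used: in $(bu)\xi=g_0y^*b+(bx^*)u$ and $\xi(bu)=g_0b^*y+(bx)u$ the starred elements $x^*,y^*$ occur, and the $*$-invariance of $J$ is exactly what places them back in $J$, after which the ideal property of $J$ completes the check. Bilinearity then upgrades closure under $a$ and $bu$ to closure under arbitrary $a+bu\in RL$, so $I$ is an ideal.

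The main obstacle is not computational but logical: the inclusion $\pi_u(I)\subseteq\pi_G(I)$ relies on cancelling the unit $g_0$, which is legitimate only after $\pi_G(I)$ is known to be an ideal, so the ideal property must be established before the equality of the two projections is asserted. The only other point requiring care is that $RL$ is non-associative, so I must handle left and right multiplication separately and never regroup a triple product; since each step uses a single multiplication by $u$, $a$, or $bu$, and the product is bilinear, this is automatic.
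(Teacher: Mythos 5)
Your proof is correct and follows essentially the same route as the paper's: establish that $\pi_G(I)$ is an ideal by multiplying a lift $x+yu$ on the left and right by $a\in RG$, obtain $\pi_G(I)\subseteq\pi_u(I)$ from $\xi u$, recover the reverse inclusion by cancelling the central unit $g_0$ once the ideal property is in hand, get $*$-invariance from $u\xi$, and check the converse directly from the multiplication rule. The only quibble is cosmetic: in $\xi(bu)=g_0b^*y+(bx)u$ the starred element is $b^*$, not $x^*$ or $y^*$, so the $*$-invariance of $J$ is actually needed only for the product $(bu)\xi$; this does not affect the argument.
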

\begin{proof}
Let $I$ be an ideal of $RL$.  The sets $I_G=\pi_G(I)$ and $I_u=\pi_u(I)$ are
certainly additive subgroups of $RG$.  Let $x_1\in I_G$.  Then there
exists $x=x_1+x_2u\in I$ for some $x_2\in RG$.  If $a\in RG$,
then $a(x_1+x_2u)=ax_1+(x_2a)u\in I$ and $(x_1+x_2u)a=x_1a+(x_2a^*)u\in I$,
so $ax_1$ and $x_1a$ are both in $I_G$.   Thus $I_G$ is an ideal of $RG$.
Also, $xu=g_0x_2+x_1u\in I$, so $x_1\in I_u$ giving $I_G\subseteq I_u$.
Now let $x_2\in I_u$.  There exists $x=x_1+x_2u\in I$ for some $x_1\in RG$.
As seen, $g_0x_2\subseteq I_G$, so $x_2=g_0^{-1}g_0x_2$ is in the ideal $I_G$.
This shows that $I_u\subseteq I_G$, so the sets are equal (and an ideal).
Finally, $u(x_1+x_2u)=g_0x_2^*+x_1^*u\in I$ implies
$I_G^*\subseteq I_u=I_G$.  The other inclusion follows from $I_G=(I_G^*)^*$.
\par Conversely, if $J$ is an ideal of $RG$ which is invariant under $*$,
then $I=J+Ju$ is an additive group and
the rule \eqref{eq1} for multiplication in $RL$ shows that $I$
is closed under multiplication on the right and left.
\end{proof}

Now let $L=M(G,*,g_0)$ be a finite RA loop
and suppose that $K$ is a field such that $\ch K\notdivides |G|$.  Then
$KG=A_1\oplus A_2\oplus\cdots\oplus A_n$
is the direct sum of simple rings which are minimal ideals of $KG$ \cite[Theorem VI.4.3]{EGG:96}.
Writing $1=e_1+e_2+\cdots +e_n$ as the sum of primitive
idempotents $e_i\in A_i$ and $s=s_1+s_2+\cdots +s_n$, $s_i\in A_i$,
then each $s_i$ is a central element that squares to $1=e_i$ in the simple ring $A_i$,
so $s_i=\gamma_i e_i$ with $\gamma_i=\pm1$.  (It will be important to remember
that the case $\gamma_i=+e_i$ occurs precisely when the component $A_i$
is commutative \cite[Proposition VI.4.6]{EGG:96}.)  Since $e_i^*=e_i$,
each $A_i=(KG)e_i$ is invariant under $*$,
so $B_i=A_i+A_iu$ is an ideal of $KL$ by \propref{prop1}, and
$KL=B_1\oplus B_2\oplus\cdots\oplus B_n$.

Clearly, for $a,b,x,y\in A_i$,  multiplication in $B_i$ is defined by
\begin{displaymath}
(a+bu)(x+yu)=(ax+\gamma y^*b)+(ya+bx^*)u, \quad a,b,x,y\in A_i,
\end{displaymath}
with $\gamma$ the projection of $g_0$ in $A_i$.

For $x\in KL$, define $n(x)=xx^*$.  This element
is central in $KL$ \cite[Corollary III.4.3]{EGG:96}
and so the restriction of $n$ to $A_i$ is a multiplicative
map---$n(xy)=n(x)n(y)$---into the field which is the centre of $A_i$.
Moreover $n$ is nondegenerate on $KL$ \cite[Proposition III.5.1]{EGG:96},
so its restriction is nondegenerate on $A_i$.  It follows that $A_i$
and $B_i$ are \emph{composition algebras}---see Sections
1 and 2 of \cite[Chapter 2]{ZS3:82}.  Since each $A_i$
is either a field or a quaternion algebra \cite[Corollary VI.4.8]{EGG:96},
there are two possibilities for the algebra $B_i$:
\begin{itemize}
\item if $A_i$ is a field,
$B_i=A_i\oplus A_i$ or a separable quadratic field;
\item if $A_i$ is a quaternion algebra, $B_i$
is a so-called \emph{Cayley algebra}, a certain $8$-dimensional algebra which is not associative.
\end{itemize}
In the case that $A_i$ is a field and $B_i=A_i\oplus A_i$, the algebra $B_i$
is called \emph{split}, a case guaranteed if $u^2$ is the square of an element in
$A_i$---see \cite[p.~17]{EGG:96} or \cite[p.~30]{ZS3:82}.
Any Cayley algebra is similarly either a division
algebra or a unique \emph{split} algebra, the latter occurring if and only if $B_i$
contains a nontrivial idempotent (there are other equivalent conditions).

We have established most of the content of this theorem.

\begin{thm}\label{thm:final} Let $L=M(G,*,g_0)=G\cup Gu$ be a finite RA loop and $K$
a field of characteristic not dividing $|L|$.
Then the group algebra $KG=\oplus A_i$ is
the direct sum of simple algebras $A_i$, each a field or a quaternion
algebra and the loop algebra  $KL=\oplus (A_i+A_iu)$ is the
direct sum of simple algebras $B_i=A_i+A_iu$, each a field, or
the direct sum $A_i\oplus A_i$ of two fields if $A_i$ is a field,
or a Cayley algebra.
\par Suppose that $L$ is a loop
in one of the five classes $\C{L}_1,\ldots,\C{L}_5$
and that there are $n$ fields and $m$ quaternion algebras in
the decomposition of $KG$.   Then
the loop algebra  $KL$ is the
direct sum of $2n$ fields and $m$ Cayley algebras.
\end{thm}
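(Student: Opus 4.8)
The plan is to treat the two assertions of the theorem separately, since the first (purely structural) statement has in fact been assembled already in the discussion preceding the statement. There one knows $KG=A_1\oplus\cdots\oplus A_n$ with each $A_i$ a field or a quaternion algebra \cite[Corollary VI.4.8]{EGG:96}, that each $A_i=(KG)e_i$ is $*$-invariant because $e_i^*=e_i$, and that the norm form $n(x)=xx^*$ restricts to a nondegenerate multiplicative form on each $A_i$. From \propref{prop1} this yields $KL=B_1\oplus\cdots\oplus B_n$ with $B_i=A_i+A_iu$, and the classification of composition algebras forces each $B_i$ to be a field, a split algebra $A_i\oplus A_i$, or---when $A_i$ is a quaternion algebra---a Cayley algebra. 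For the first half I would simply collect these facts into one statement.

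For the counting assertion, the quaternion case is immediate: each of the $m$ quaternion components $A_i$ contributes a Cayley algebra $B_i$ by the structural part, and since the statement does not distinguish split from division Cayley algebras, nothing further need be checked. This accounts for the $m$ Cayley algebras. The real content is to show that each of the $n$ \emph{field} components $A_i$ contributes the split algebra $A_i\oplus A_i$---two fields---rather than a separable quadratic field extension, thereby doubling the field count to $2n$.

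The key step is to invoke the splitting criterion recorded just above the statement: $B_i=A_i\oplus A_i$ is guaranteed once $u^2=g_0$ maps to a square in the field $A_i$. Here I would read off the defining data of the five classes from \tabref{tab:indecomps}: in $\C{L}_1,\C{L}_3,\C{L}_5$ one has $g_0=1$, and in $\C{L}_2,\C{L}_4$ one has $g_0=t_1=x^2$. Thus in every one of these five classes $g_0=h^2$ for some group element $h\in G$ (namely $h=1$ or $h=x$), so the image of $g_0$ in any simple component $A_i$ equals the square of the image of $h$, hence is a square in $A_i$. Applying the criterion to each of the $n$ field components produces $2n$ fields, and together with the $m$ Cayley algebras this gives the stated decomposition of $KL$.

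The main obstacle---really the only delicate point---is ruling out the separable quadratic field possibility for the commutative components, and recognizing that what rules it out is the purely group-theoretic fact that $g_0$ is a square \emph{in $G$} for precisely these five classes. This is also exactly where the restriction to $\C{L}_1,\ldots,\C{L}_5$ enters: in $\C{L}_6$ one has $g_0=t_1$ for a group of type $\C{D}_5$, where $x^2=t_2$ and $y^2=t_3$ force $t_1$ to be a non-square, and in $\C{L}_7$ one has $g_0=t_4$, again not a square in $G$. One should also be slightly careful that the relevant square is taken in $A_i$ and not merely in $KL$; but since $h\in G$ maps to a unit of the field $A_i$, its image is a genuine field element whose square is the image of $g_0$, so the criterion applies verbatim.
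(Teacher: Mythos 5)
Your proposal is correct, and it reaches the key conclusion by a slightly different route than the paper. For the commutative components the paper does not invoke the splitting criterion abstractly; it exhibits a nontrivial idempotent by hand: for $\C{L}_1,\C{L}_3,\C{L}_5$ (where $u^2=1$) it checks that $e\hat u=\tfrac12 e+\tfrac12 eu$ with $\hat u=\tfrac12(1+u)$ is an idempotent in $B_i$ distinct from $0$ and $1$ by a support argument, and for $\C{L}_2,\C{L}_4$ it first replaces $u$ by $v=x^{-1}u$, computes $v^2=s$, and shows $\hat v e=\tfrac12 e(1+v)$ is such an idempotent. Your argument --- that in all five classes $g_0$ is literally the square of a group element ($1$ or $x$), so its projection into each field component $A_i$ is a square there, whence the criterion quoted just before the theorem forces $B_i\iso A_i\oplus A_i$ --- is the abstract form of the same computation (the paper's substitution $v=x^{-1}u$ is exactly ``divide $u$ by a square root of $g_0$''), and it is in fact the technique the paper itself uses later for the $\C{L}_6$ counterexample, where $g_0=t_1$ is not a square in $G$ but its projection is still a square in each field component. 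What your version buys is uniformity and a clean explanation of why the hypothesis $\C{L}_1,\ldots,\C{L}_5$ is the natural one; what the paper's version buys is self-containedness (an explicit idempotent rather than an appeal to the splitting criterion of \cite[p.~17]{EGG:96} or \cite[p.~30]{ZS3:82}). Your handling of the quaternion components and of the first, structural half of the statement matches the paper's.
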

\begin{proof}
It remains just to prove the last statement.  In the cases of $\C{L}_1$, $\C{L}_3$
and $\C{L}_5$, we have $u^2=1$.  Let $\hat{u}=\tfrac12(1+u)$ and let $e$
be an idempotent defining the simple component $A_i$; that is, $A_i=(KG)e_i$.
Then $e\hat{u}=\tfrac12e+\tfrac12eu\in B_i$ is idempotent,
but neither $0$ nor $1$, because the support\footnote{The \emph{support}
of an element $\alpha=\sum\alpha_g g$ in a loop ring is the set of
loop elements $g$ that actually appear
in the representation of $\alpha$, that is, with nonzero coefficients.}  of $\tfrac12e$
is in $G$ while none of the support of $\tfrac12eu$ is in $G$.  Thus $B_i$ is not a field.
In the case of $\C{L}_2$ and $\C{L}_4$, we have $u^2=t_1=x^2$.  Set $v=x^{-1}u$.
Then $v^2=s$ so $v^4=1$.  Set $\hat{v}=\tfrac14(1+v+v^2+v^3)=\tfrac14(1+v+s+sv)=\tfrac14(1+v)(1+s)$.
Thus $\hat{v}e=\tfrac12e(1+v)$ in the field $A_i$ and hence, as in the first case, this
is an idempotent different from $0,1$ in $B_i$.
\end{proof}

\begin{rem} If $L$ belongs to $\C{L}_6$ or $\C{L}_7$, it may or
may not be the case that a field in the decomposition of $KG$ splits
into two fields in the decomposition of $KL$.  General conditions are
messy to write down though in any specific case, the above methods
still permit us to describe the structure of $KL$.  Many examples
are given in \secref{sec:l7}.
\end{rem}

\subsection*{Examples}
If $L=M(G,*,1)$ is in class $\C{L}_1$ with $m\ge2$, Theorems~\ref{thm:d1}
and \ref{thm:final} tell us that $\B{Q}L$ is the direct sum
of $8m$ fields and one Cayley algebra because $\B{Q}G$ is the
direct sum of $4m$ fields and one quaternion algebra.
The same theorems show that for a finite field $K$ (of characteristic different
from $2$), there are at least $2(8m-12)+2=16m-22$ simple components in the Wedderburn
decomposition of $KL$.  Moreover, if $K=F$ is a finite field of order $q\equiv3\pmod8$, then
$FL$ has precisely this number of simple components, of which $2(8m-12)=16m-24$
are fields and two are (necessarily split) Cayley algebras.

Similarly, from \thmref{thm:d2} and \thmref{thm:final}, we know that
if $L=M(G,*,g_0)$ is an RA loop of type $\C{L}_2$ and $m\ge3$, then $\B{Q}L$ is the
direct sum of $4m+4$ fields and one Cayley algebra while, if
$K$ is a finite field of characteristic different from $2$, $KL$ is the direct
sum of at least $2(4m-2)+2=8m-2$ simple algebras, this number being achieved
for finite fields of order $q\equiv3\pmod8$.  Specifically, with $F$ such a field,
the loop algebra $FL$ is the direct sum of precisely
$8m-4$ fields and two (split) Cayley algebras.

\subsection*{The Isomorphism Problem for Loops: Some Negative Answers}
It is well known that given two groups $G$ and $H$ and a field $K$, an
isomorphism $KG\iso KH$ need not imply $G\iso H$.  For example, if $G$
and $H$ are abelian groups of the same finite order $n$, then
the complex group algebras $\B{C}G$ and $\B{C}H$
are each the direct sum of $n$ copies of \B{C}.  Even more,
the two nonabelian groups of order $p^3$, $p\ne2$, have isomorphic group algebras
over any field of characteristic different from $p$.
In a similar vein, we note that
over any field $K$ of characteristic different from $2$,
each of the two RA loops of order $16$ has a loop algebra
that is the direct sum of four copies of $K$ and a Cayley algebra \cite[Corollary VII.2.3]{EGG:96}.
Over a finite field, any Cayley algebra is split (and hence unique), so the Cayley loop
and the loop denoted $M_{16}(Q_8,2)$ provide an instance of two nonisomorphic
loops with loop algebras isomorphic over any finite field of characteristic different
from~$2$ (and hence over any field of positive characteristic not $2$).

It is harder to find examples of nonisomorphic loops $L$ and $M$
with $\B{Q}L\iso \B{Q}M$.  One example, with one loop indecomposable
and the other not, appears in \cite{EGG:96} (Example XI.2.6).
We use the results of this paper to produce an example in which each of the loops
in question is \textbf{indecomposable}.

Let $L=M(G,*,g_0)$ be the loop from class $\C{L}_6$ defined by
the parameters $m_1=1$, $m_2=2$ and $m_3=1$.
Let $e\in \B{Q}G$ be an idempotent such that $A_i=(\B{Q}G)e$ is a field.
Since $m_1=1$, $(t_1e)^2=e$, and so (the projection in the field $A_i$) of) $t_1e$ is $\pm 1$.
Clearly $t_1e$ is a square if it is $+1$, but it is also
a square if is $-1$
because $t_2e$ is a $4$th root of unity in $A_i$ (so $t_1e=(t_2e)^2$).
Since in class
$\C{L}_6$, $u^2=t_1$, we see that $u^2$ is a square in each of those simple components
of $\B{Q}G$ which is a field.
It follows that the composition
algebra $A_i+A_iu$ splits into the sum of two copies of the field $A_i$,
and this occurs for each of the $14$ fields appearing in the
Wedderburn decomposition of $\B{Q}G$---see Case ii.a of \secref{sec:d5}.
So $\B{Q}L$ is the direct sum of
$28$ fields and six Cayley algebras.  By \thmref{thm:final}, such is also
the case for loop $M$ in $\C{L}_5$ defined by $m_1=1$, $m_2=2$, $m_3=1$ because the groups
defining $L$ and $M$ are the same in each case (the centres are $C_2\times C_4\times C_2$,
$x^2=t_2$, $y^2=t_3$).   Thus we have
two nonisomorphic loops $L$ and $M$ (of order $128$) with $\B{Q}L\iso \B{Q}M$.

Readers may wonder if $128$ is the smallest order for which there exists
a counterexample to the isomorphism problem with both loops indecomposable.
The answer is ``yes,'' as we proceed to explain.

With reference to \cite[Table V.4]{EGG:96}, we begin by noting
that there are two indecomposable RA loops
of order $16$.  In one case, $G=D_4$ and $\B{Q}G\iso 4\B{Q}\oplus M_2(\B{Q})$,
so $\B{Q}L$ is the direct sum of eight copies of \B{Q} and the split Cayley algebra.
In the other case, $G=Q_8$, $\B{Q}G\iso 4\B{Q}\oplus \B{H}$, \B{H} denoting
Hamilton's quaternion division algebra over \B{Q},
so $\B{Q}L$ is the direct sum of eight copies of \B{Q} and the Cayley division algebra.

There are four nonisomorphic indecomposable RA loops of order $32$:
\begin{itemize}
\item $L_1=M(16\Gamma_2b,*,1)$ of type $\C{L}_1$ with $m_1=2$ and $G=16\Gamma_2b$
in the notation of \cite{Hall:64};
\item $L_2=M(16\Gamma_2d,*,t_1)$ of type $\C{L}_2$ with $m_1=2$ and $G=16\Gamma_2d$;
\item $L_3=M(16\Gamma_2c_1,*,1)$ of type $\C{L}_3$ with $m_1=m_2=1$; and
\item $L_4=M(16\Gamma_2c_2,*,t_1)$ of type $\C{L}_4$ with $m_1=m_2=1$.
\end{itemize}
Results of this paper show that the commutative part of $\B{Q}L_1$ is the direct sum of
sixteen fields,
whereas the commutative parts of the other three loop algebras are the direct sum of twelve
fields.  The loop algebra $\B{Q}L_2$ has just one Cayley algebra in its Wedderburn
decomposition and so is isomorphic to neither $\B{Q}L_3$ nor $\B{Q}L_4$, each of which
are the direct sum of twelve fields and \textbf{two} Cayley algebras.  The latter two
loop algebras are distinguished by the fact that both Cayley algebras in the decomposition
of $\B{Q}L_3$ are split while the (rational) loop algebra of $L_4$ contains one Cayley division
algebra (because both quaternion algebras in the first group algebra are split while the
second group algebra contains a copy of Hamilton's division algebra------see
\cite{Vergara:06, Vergara:97} where the respective groups are labelled 32/9 and 32/10).

\begin{table}
\footnotesize
\caption{\label{tab2}}
\begin{center}
\begin{tabular}{lll}  \hline\hline \\[-4pt]
\text{Loop $L$} & \multicolumn{1}{c}{\text{Class}} & \multicolumn{1}{c}{\text{$\B{Q}L$}} \\ \\[-4pt]\hline \\[-4pt]
$L_5=M(32\Gamma_2g,*,1)$ & $\C{L}_1$, $m_1=3$ & $24$ fields, one Cayley algebra \\
$L_6=M(32\Gamma_2k,*,t_1)$ & $\C{L}_2$, $m_1=3$ & $16$ fields, one Cayley algebra \\
$L_7=M(32\Gamma_2f,*,1)$ & $\C{L}_3$, $m_1=2$, $m_2=1$ & $24$ fields, two Cayley algebras \\
$L_8=M(32\Gamma_2i,*,t_1)$ & $\C{L}_4$, $m_1=2$, $m_2=1$ & $12$ fields, six Cayley algebras \\
$L_9=M(32\Gamma_2j_1,*,1)$ & $\C{L}_3$, $m_1=1$, $m_2=2$ & $16$ fields, three Cayley algebras \\
$L_{10}=M(32\Gamma_2j_2,*,t_1)$ & $\C{L}_4$,  $m_1=1$, $m_2=2$ & $16$ fields, three Cayley algebras \\
$L_{11}=M(32\Gamma_2h,*,1)$ & $\C{L}_5$, $m_1=m_2=m_3=1$ & $20$ fields, four Cayley algebras \\
$L_{12}=M(32\Gamma_2h,*,t_1)$ & $\C{L}_6$,  $m_1=m_2=m_3=1$ & $20$ fields, four Cayley algebras \\ \hline\hline
\end{tabular}
\end{center}
\end{table}

There are eight nonisomorphic indecomposable RA loops of order $64$, pertinent data of
each being summarized in \tabref{tab2}.   Clearly, it suffices to distinguish the loop
algebras $\B{Q}L_9$ and $\B{Q}L_{10}$, and the loop algebras $\B{Q}L_{11}$ and $\B{Q}L_{12}$.
For this, we simply note
that in the Wedderburn decompositions of $\B{Q}L_9$ and $\B{Q}L_{11}$,
the Cayley algebras are all split, but
this is not the case in $\B{Q}L_{10}$ and $\B{Q}L_{12}$ (the groups $32\Gamma_2j_1$
and $32\Gamma_2j_2$ are labelled 32/20 and 32/21 in \cite{Vergara:06, Vergara:97}).

\section{Loops from class $\C{L}_7$} \label{sec:l7}
Turning to indecomposable loops from the class $\C{L}_7$, which are formed
by doubling groups of the form $G_0\times \langle t_4\rangle$ with $G_0$
a group in class $\C{D}_5$,  experience
shows that we will need to know
the number of cyclic subgroups in the direct product of four cyclic groups,
each of $2$-power
order.  This information is given by our final lemma.

\begin{lem}\label{lem:4cyclics} Let
$A=C_{2^a}\times C_{2^b}\times C_{2^c}\times C_{2^d}$
be the direct product of cyclic groups of orders $2^a$, $2^b$, $2^c$
and $2^d$ with $a\ge b\ge c\ge d$.   The number of cyclic subgroups of $A$ of order $2^k$ is
\begin{itemize}
\item $15(2^{3(k-1)})$ if $1\le k\le d$,
\item $7(2^d)2^{2(k-1)}$  if $d< k\le c$,
\item $3(2^{c+d+k-1})$  if $c< k\le b$, and
\item $2^{b+c+d}$ if $b<k\le a$.
\end{itemize}
Including the trivial subgroup, in all, $A$ has
\begin{equation*}
\tfrac{15}{7}(8^d-1)+\tfrac73(2^d)(4^c-4^d)
+3(2^{c+d})(2^b-2^c)+(a-b)2^{b+c+d}+1
\end{equation*}
cyclic subgroups.
\end{lem}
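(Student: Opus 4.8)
The plan is to follow exactly the method used in \lemref{lem:2cyclics} and \lemref{lem:3cyclics}: count the cyclic subgroups of a given order $2^k$ by first counting the elements of order $2^k$ in $A$ and then dividing by $\phi(2^k)=2^{k-1}$, the number of generators of each such subgroup. Summing the resulting per-order counts over $k$ and adding one for the trivial subgroup will then yield the total.

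First I would record the clean multiplicative fact that the number of elements $(u,v,w,z)\in A$ of order dividing $2^k$ is the product of the corresponding counts in the four factors, namely $2^{\min(k,a)}2^{\min(k,b)}2^{\min(k,c)}2^{\min(k,d)}$, since $(u,v,w,z)^{2^k}=1$ if and only if each coordinate has order dividing $2^k$. Subtracting the count for $2^{k-1}$ from that for $2^k$ gives the number of elements of order exactly $2^k$. The four cases in the statement correspond precisely to the four regimes $1\le k\le d$, $d<k\le c$, $c<k\le b$ and $b<k\le a$, in which the vector of minima $(\min(k,a),\min(k,b),\min(k,c),\min(k,d))$ takes the constant forms $(k,k,k,k)$, $(k,k,k,d)$, $(k,k,c,d)$ and $(k,b,c,d)$. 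In each regime the difference telescopes to $(2^r-1)$ times a power of two, where $r\in\{4,3,2,1\}$ is the number of coordinates still equal to $k$; dividing by $2^{k-1}$ produces the four displayed formulas $15(2^{3(k-1)})$, $7(2^d)2^{2(k-1)}$, $3(2^{c+d+k-1})$ and $2^{b+c+d}$.

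For the grand total I would sum the four expressions over their respective ranges of $k$ and add $1$ for the trivial subgroup. Each partial sum is geometric: the first gives $15\sum_{j=0}^{d-1}8^j=\tfrac{15}{7}(8^d-1)$, the second gives $7(2^d)\sum_{j=d}^{c-1}4^j=\tfrac73(2^d)(4^c-4^d)$, the third gives $3(2^{c+d})\sum_{j=c}^{b-1}2^j=3(2^{c+d})(2^b-2^c)$, and the fourth contributes $(a-b)2^{b+c+d}$. Adding these together with the $+1$ recovers the stated count exactly.

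This argument is entirely mechanical, and I do not expect any genuine obstacle; the work is bookkeeping. The one point requiring care is the boundary behaviour between regimes---ensuring, for instance, that the geometric-series evaluations remain valid when a range collapses to an empty sum (as happens when $c=d$ or $b=c$), and that the index shifts $j=k-1$ are applied consistently. As in the proofs of \lemref{lem:2cyclics} and \lemref{lem:3cyclics}, one could alternatively derive the middle-range counts by restricting to a subgroup isomorphic to a product of fewer cyclic factors and invoking those earlier lemmas, but the uniform ``elements of order dividing $2^k$'' computation is cleaner and avoids repeated case analysis.
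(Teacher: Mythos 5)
Your proposal is correct and follows essentially the same route as the paper: count the elements of exact order $2^k$ in each of the four regimes, divide by $\phi(2^k)=2^{k-1}$ to get the number of cyclic subgroups of that order, and sum the resulting geometric series. The only (cosmetic) difference is that you obtain the element counts uniformly from the product formula $\prod 2^{\min(k,\cdot)}$, whereas the paper cites \lemref{lem:2cyclics} and \lemref{lem:3cyclics} for the two intermediate regimes; the arithmetic is identical either way.
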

\begin{proof}
Suppose $1\le k\le d$.  Then $(t,u,v,w)\in A$ has order $2^k$
if and only if $(t,u,v,d)$ has order $2^k$
in a copy of $C_{2^k}\times C_{2^k}\times C_{2^k}\times C_{2^k}$ within $A$.  The
number of such elements is the order of $C_{2^k}\times C_{2^k}\times C_{2^k}\times C_{2^k}$
less the order of $C_{2^{k-1}}\times C_{2^{k-1}}\times C_{2^{k-1}}\times C_{2^{k-1}}$,
which is $2^{4k}-2^{4(k-1)}=2^{4(k-1)}(2^4-1)=15(2^{4(k-1)})$.
Since there are $\phi(2^k)=2^{k-1}$ elements of order $2^k$ in any cyclic subgroup
of order $2^k$, the number of cyclic subgroups of order $2^k$ in this first case
is $15(2^{3(k-1)})$.  In all, this gives
\begin{multline*}
15+15(2^3)+15(2^6)+15(2^9)+\cdots +15(2^{3(d-1)})\\
 =15(1+2^3+2^6+\cdots+2^{3(d-1)})=\tfrac{15}{7}(2^{3d}-1)
\end{multline*}
cyclic subgroups of order $2^k$ with $1\le k\le d$.
\smallskip\par Suppose $d<k\le c$.  An element $(t,u,v,w)$ has order $2^k$
if and only if it lives in a copy of $C_{2^k}\times C_{2^k}\times C_{2^k}\times C_{2^d}$.
Using \lemref{lem:3cyclics}, the number of elements of order $2^k$ in $A$
is $7(2^{3(k-1)})2^d$ and the number of cyclic subgroups of this order is
$7(2^{2(k-1)})2^d$.  In all, we have
\begin{multline*}
7(2^d)(2^{2d}+2^{2(d+1)}+2^{2(d+2)}+\cdots+2^{2(d-1)}) \\
\begin{array}{l}
=7(2^d)(2^d)(1+2^2+2^4+\cdots+2^{2(c-d-1)}\\[4pt]
=7(2^{3d})\frac{2^{2(c-d)}-1}{3} =\frac73(2^{3d})(2^{2(c-d)}-1)
=\frac73(2^d)(2^{2c}-2^{2d})
\end{array}
\end{multline*}
cyclic subgroups with $d<k\le c$.
\smallskip\par Suppose $c<k\le b$.  An element $(t,u,v,w)$ has order $2^k$
if and only if it lives in a copy of $C_{2^k}\times C_{2^k}\times C_{2^c}\times C_{2^d}$.
Using \lemref{lem:2cyclics}, the number of elements of order $2^k$ with $k$
in the indicated range is $3(4^{k-1})(2^c)(2^d)$ and the corresponding
number of subgroups is $3(2^{k-1})(2^{c+d})$.  In all, we have
\begin{multline*}
3(2^{c+d})(2^c+2^{c+1}+2^{c+2}+\cdots + 2^{b-1}) \\
\begin{array}{l}
=3(2^{c+d})(2^c)(1+2+2^2+\cdots + 2^{b-c-1}) \\[4pt]
   =3(2^{c+d})(2^c)(2^{b-c}-1)=3(2^{c+d})(2^b-2^c)
\end{array}
\end{multline*}
cyclic subgroups of order $2^k$, $c<k\le b$.
\smallskip\par Suppose finally that $b<k\le a$.  Then $(t,u,v,w)$ has order
$2^k$ if and only if it lives in a subgroup of $A$ isomorphic to $C_{2^k}\times C_{2^b}
\times C_{2^c}\times C_{2^d}$.
The number of such elements is $2^{k-1}2^{b+c+d}$, so the
number of cyclic subgroups is $2^{b+c+d}$ and the total number for all $k$
with $b<k\le a$ is $(a-b)2^{b+c+d}$.
\end{proof}

In \tabref{tab1}, we record some specific consequences of this lemma
which hasten many of the calculations that follow.
\lemref{lem:4cyclics} provides the number of \B{Q}-classes
in the direct product of four cyclic groups
while \lemref{lem1}, which tells us which of these split into (two) $F$-classes and
which do not, allows us to determine the number of $F$-classes.  Thus,
if there are $\alpha$ \B{Q}-classes, $\beta$ of which correspond to cyclic subgroups
generated by an element of order at most $4$, then there are $\alpha+2(\alpha-\beta)
=2\alpha-\beta$ $F$-classes.

\begin{table}[p]
\footnotesize
\begin{tabular}{ccc} \hline\hline
Group & Number of \B{Q}-classes & Number of $F$-classes \\ \hline \\[-6pt]
$C_2\times C_2$ & $4$ & $4$ \\
$C_4\times C_2$ & $6$ & $6$ \\
$C_4\times C_4$ & $10$ & $10$ \\
$C_8\times C_2$ & $8$ & $10$ \\
$C_8\times C_4$ & $14$ & $18$ \\
$C_8\times C_8$ & $22$ & $34$ \\
$C_{2^a}\times C_2$ & $2a+2$ & $4a-2$ \\
$C_{2^a}\times C_4$ & $4a+2$ & $8a-6$ \\
$C_{2^a}\times C_8$ & $8a-2$ & $16a-14$ \\[8pt]
$C_2\times C_2\times C_2$ & $8$ & $8$ \\
$C_4\times C_2\times C_2$ & $12$ & $12$ \\
$C_4\times C_4\times C_2$ & $20$ & $20$ \\
$C_4\times C_4\times C_4$ & $36$ & $36$ \\
$C_8\times C_2\times C_2$ & $16$ & $20$ \\
$C_8\times C_4\times C_2$ & $28$ & $36$ \\
$C_8\times C_4\times C_4$ & $52$ & $68$ \\
$C_8\times C_8\times C_2$ & $44$ & $68$ \\
$C_8\times C_8\times C_4$ & $84$ & $132$ \\
$C_{2^a}\times C_2\times C_2$ & $4a+4$ & $8a-4$ \\
$C_{2^a}\times C_4\times C_2$ & $8a+4$ & $16a-12$ \\
$C_{2^a}\times C_4\times C_4$ & $16a+4$ & $32a-28$ \\
$C_{2^a}\times C_8\times C_2$ & $16a-4$ & $32a-28$ \\
$C_{2^a}\times C_8\times C_4$ & $32a-12$ & $64a-60$ \\
$C_{2^a}\times C_8\times C_8$ & $64a-44$ & $128a-124$ \\
$C_{2^a}\times C_{2^b}\times C_2$ & $2^{b+1}(3+a-b)-4$ & $2^{b+2}(3+a-b)-28$ \\
$C_{2^a}\times C_{2^b}\times C_4$ & $2^{b+2}(3+a-b)-12$ & $2^{b+3}(3+a-b)-60$ \\
$C_{2^a}\times C_{2^b}\times C_8$ & $2^{b+3}(3+a-b)-44$ & $2^{b+4}(3+a-b)-124$ \\[8pt]
$C_2\times C_2\times C_2\times C_2$ & $16$ & $16$ \\
$C_4\times C_2\times C_2\times C_2$ & $24$ & $24$ \\
$C_4\times C_4\times C_2\times C_2$ & $40$ & $40$ \\
$C_4\times C_4\times C_4\times C_2$ & $72$ & $72$ \\
$C_4\times C_4\times C_4\times C_4$ & $136$ & $136$ \\
$C_8\times C_2\times C_2\times C_2$ & $32$ & $40$ \\
$C_8\times C_4\times C_2\times C_2$ & $56$ & $72$ \\
$C_8\times C_4\times C_4\times C_2$ & $104$ & $136$ \\
$C_8\times C_8\times C_2\times C_2$ & $88$ & $136$ \\
$C_8\times C_8\times C_4\times C_2$ & $168$ & $264$\\
$C_{2^a}\times C_2\times C_2\times C_2$ & $8a+8$ & $16a-8$  \\
$C_{2^a}\times C_4\times C_2\times C_2$ & $16a+8$ & $32a-24$ \\
$C_{2^a}\times C_4\times C_4\times C_2$ & $32a+8$ & $64a-56$ \\
$C_{2^a}\times C_4\times C_4\times C_4$ & $64a+8$ & $128a-120$ \\
$C_{2^a}\times C_8\times C_2\times C_2$ & $32a-8$ & $64a-56$ \\
$C_{2^a}\times C_8\times C_4\times C_2$ & $64a-24$ & $128a-120$ \\
$C_{2^a}\times C_8\times C_8\times C_2$ & $128a-88$ & $256a-248$ \\
$C_{2^a}\times C_{2^b}\times C_2\times C_2$ & $2^{b+2}(3+a-b)-8$ & $2^{b+3}(3+a-b)-56$ \\
$C_{2^a}\times C_{2^b}\times C_4\times C_2$ & $2^{b+3}(3+a-b)-24$ & $2^{b+4}(3+a-b)-120$ \\
$C_{2^a}\times C_{2^b}\times C_4\times C_4$ & $2^{b+4}(3+a-b)-56$ & $2^{b+5}(3+a-b)-248$ \\
$C_{2^a}\times C_{2^b}\times C_8\times C_2$ & $2^{b+4}(3+a-b)-88$ & $2^{b+5}(3+a-b)-248$ \\ \hline\hline \\[-6pt]
\end{tabular}
\caption{$a\ge3$, $b\ge3$, $a\ge b$.  \label{tab1}}
\end{table}

To investigate the structure of $KL$ with $L=M(G_0\times\langle t_4\rangle,*,t_4)$
a loop in class $\C{L}_7$, we begin as usual
by studying the Wedderburn decomposition of $KG$,
$G=G_0\times\langle t_4\rangle$ and $G_0$ a group from class
$\C{D}_5$.  Thus $x^2=t_2$ and $y^2=t_3$.
It follows that $(xy)^2=st_2t_3$ so that modulo
$\langle s\rangle$, the group generated by $\C{Z}(G)$ and $xy$
is the direct product $\langle xy\rangle\times\langle t\rangle\times\langle t_4\rangle$
with $t$ either $t_2$ or $t_3$, whichever has smaller order.

As previously, we begin with small cases and, again,
use $F$ to denote a finite field of
order $q\equiv3\pmod8$ and $K$ a field which is ambiguously \B{Q} or~$F$.
Since we expect our methods will by now be clear to faithful readers,  we simply
record the salient facts in tables.

\smallskip\par\noindent (i) $m_1=m_2=m_3=m_4=1$, $s=t_1$.

\begin{center}
\small

\end{center}
\bigskip

Since $N=N_1+2N_2+N_3=2^{m_2+2}(11+5m_4-5m_2)-56$ and $M=M_1+2M_2+M_3=2^{m_2+3}(11+5m_4-5m_2)-184$,
$\B{Q}G$ is the direct sum of $N_0=2^{m_2+4}(2+m_4-m_2)-44$ fields
and $N-N_0=2^{m_2+2}(3+m_4-m_2)-12$
quaternion algebras while $FG$ is the direct sum of $M_0=2^{m_2+5}(2+m_4-m_2)-124$ fields
and $M-M_0=2^{m_2+3}(3+m_4-m_2)-60$ quaternion algebras.

\bigskip\par\noindent (vi.f) $m_1=1$, $m_2\ge3$, $m_3\ge3$, $m_4\ge3$, $s=t_1$.
Because of the symmetry in $x$ and $y$, we may assume that $m_2\ge m_3$, but this still leaves three
possibilities.  Many of the numbers that arise involve the number of cyclic subgroups of the direct
product of $C_{2^a}\times C_{2^b}\times C_{2^c}$ which for convenience in what follows, and assuming $a\ge b\ge c$,
we denote $f(a,b,c)$.  Thus, from \lemref{lem:3cyclics}, we
have
\begin{equation*}
f(a,b,c)=\tfrac73(4^c-1)+3(2^c)(2^b-2^c)+(a-b)2^{b+c}+1.
\end{equation*}

\newpage
We investigate the first case, $m_2\ge m_3\ge m_4$, in two steps,
supposing first that $m_2>m_3\ge m_4$.
\bigskip
\begin{center}
\footnotesize
\begin{tabular}{ll}
\multicolumn{1}{c}{Group} & \multicolumn{1}{c}{No. of \B{Q}- and $F$-classes} \\ \hline \\[-6pt]
$\C{Z}(G)$ & $N_1=2f(m_2,m_3,m_4)$ \\
                \quad $\iso C_2\times C_{2^{m_2}}\times C_{2^{m_3}}\times C_{2^{m_4}}$  & $M_1=2N_1-72$ \\[8pt]
$\C{Z}(G)/\langle s\rangle$ & $\B{Q}\colon f(m_2,m_3,m_4)$ \\
  \quad $\iso C_{2^{m_2}}\times C_{2^{m_3}}\times C_{2^{m_4}}$  & $F\colon 2f(m_2,m_3,m_4)-36$ \\[8pt]
$\langle\C{Z}(G),x\rangle/\langle s\rangle$ & $\B{Q}\colon f(m_2+1,m_3,m_4)$ \\
  \quad $\iso C_{2^{m_2+1}}\times C_{2^{m_3}}\times C_{2^{m_4}}$   & \quad ($N_2=f(m_2+1,m_3,m_4)-f(m_2,m_3,m_4)$) \\
               & $F\colon 2f(m_2+1,m_3,m_4)-36$ \\
                     & \quad ($M_2=2N_2$)\\[8pt]
$\langle\C{Z}(G),y\rangle/\langle s\rangle$ & $\B{Q}\colon f(m_2,m_3+1,m_4)$ \\
  \quad $\iso C_{2^{m_2}}\times C_{2^{m_3+1}}\times C_{2^{m_4}}$  & \quad ($N_3=f(m_2,m_3+1,m_4)-f(m_2,m_3,m_4)$) \\
               & $F\colon 2f(m_2,m_3+1,m_4)-36$ \\
                     & \quad ($M_3=2N_3$) \\[8pt]
$\langle\C{Z}(G),xy\rangle/\langle s\rangle$ & $\B{Q}\colon f(m_2+1,m_3,m_4)$ \\
  \quad $\iso C_{2^{m_2+1}}\times C_{2^{m_3}}\times C_{2^{m_4}}$  & \quad ($N_4=N_2$) \\
                & $F\colon 2f(m_2+1,m_3,m_4)-36$ \\
                     & \quad ($M_4=M_2$)\\[8pt]
$G/G'$ & $N_0=f(m_2+1,m_3+1,m_4)$ \\
  \quad $\iso C_{2^{m_2+1}}\times C_{2^{m_3+1}}\times C_{2^{m_4}}$ & $M_0=2N_0-36$ \\ \hline
\end{tabular}
\end{center}
\bigskip

Since
$N=N_1+2N_2+N_3=2f(m_2+1,m_3,m_4)+f(m_2,m_3+1,m_4)-f(m_2,m_3,m_4)=2^{m_3+m_4}(9+3m_2-3m_3)
-\tfrac43(4^{m_4}+2)$
and $M=M_1+2M_2+M_3=2N-72$, $\B{Q}G$ is the direct sum of
\begin{equation*}
N_0=f(m_2+1,m_3+1,m_4)=2^{m_3+m_4}(6+2m_2-2m_3)-\tfrac23(4^{m_4}+2)
\end{equation*}
fields and
\begin{equation*}
N-N_0=2^{m_3+m_4}(3+m_2-m_3)-\tfrac23(4^{m_4}+2)
\end{equation*}
quaternion algebras while $FG$ is the direct sum of
$M_0=2N_0-36$ fields and $M-M_0=2(N-N_0)-36$ quaternion
algebras.

\newpage
Now suppose $m_2=m_3\ge m_4$.
\bigskip
\begin{center}
\footnotesize
\begin{tabular}{ll}
\multicolumn{1}{c}{Group} & \multicolumn{1}{c}{No. of \B{Q}- and $F$-classes} \\ \hline \\[-6pt]
$\C{Z}(G)$ & $N_1=2f(m_2,m_2,m_4)$ \\
  \quad $\iso C_2\times C_{2^{m_2}}\times C_{2^{m_2}}\times C_{2^{m_4}}$  & $M_1=2N_1-72$ \\[8pt]
$\C{Z}(G)/\langle s\rangle$ & $\B{Q}\colon f(m_2,m_2,m_4)$ \\
  \quad $\iso C_{2^{m_2}}\times C_{2^{m_2}}\times C_{2^{m_4}}$  & $F\colon 2f(m_2,m_2,m_4)-36$ \\[8pt]
$\langle\C{Z}(G),x\rangle/\langle s\rangle$  & $\B{Q}\colon f(m_2+1,m_2,m_4)$ \\
  \quad $\iso C_{2^{m_2+1}}\times C_{2^{m_2}}\times C_{2^{m_4}}$   & \quad ($N_2=f(m_2+1,m_2,m_4)-f(m_2,m_2,m_4)$) \\
               & $F\colon 2f(m_2+1,m_2,m_4)-36$ \\
                     & \quad ($M_2=2N_2$)\\[8pt]
$\langle\C{Z}(G),y\rangle/\langle s\rangle$ & $\B{Q}\colon f(m_2+1,m_2,m_4)$ \\
  \quad $\iso C_{2^{m_2}}\times C_{2^{m_2+1}}\times C_{2^{m_4}}$  & \quad ($N_3=N_2$) \\
               & $F\colon 2f(m_2+1,m_2,m_4)-36$ \\
                     & \quad ($M_3=2N_2$) \\[8pt]
$\langle\C{Z}(G),xy\rangle/\langle s\rangle$  & $\B{Q}\colon f(m_2+1,m_2,m_4)$ \\
  \quad $\iso C_{2^{m_2+1}}\times C_{2^{m_2}}\times C_{2^{m_4}}$  & \quad ($N_4=N_2$) \\
                & $F\colon 2f(m_2+1,m_2,m_4)-36$ \\
                     & \quad ($M_4=M_2$)\\[8pt]
$G/G'$ & $N_0=f(m_2+1,m_2+1,m_4)$ \\
  \quad $\iso C_{2^{m_2+1}}\times C_{2^{m_2+1}}\times C_{2^{m_4}}$  & $M_0=2N_0-36$ \\ \hline
\end{tabular}
\end{center}
\bigskip

Since $N=N_1+3N_2=3f(m_2+1,m_2,m_4)-f(m_2,m_2,m_4)=9(2^{m_2+m_4})-\tfrac43(4^{m_4}+2)$
and $M=M_1+3M_2=2N-72$, $\B{Q}G$ is the direct sum of
\begin{equation*}
N_0=6(2^{m_2+m_4})-\tfrac23(4^{m_4}+2)
\end{equation*}
fields and
\begin{align*}
N-N_0 &= 3f(m_2+1,m_2,m_4)-f(m_2,m_2,m_4)-f(m_2+1,m_2+1,m_4) \\
  &=3(2^{m_2+m_4})-\tfrac23(4^{m_4}+2)
\end{align*}
quaternion algebras, while $FG$ is the direct sum of
$M_0=2N_0-36$ fields and $M-M_0=2(N-N_0)-36$ quaternion
algebras.

\newpage
The case $m_4\ge m_2\ge m_3$ we again split into several subcases, considering the possibility
$m_4>m_2>m_3$ first.
\bigskip
\begin{center}
\footnotesize
\begin{tabular}{ll}
\multicolumn{1}{c}{Group} & \multicolumn{1}{c}{No. of \B{Q}- and $F$-classes} \\ \hline \\[-6pt]
$\C{Z}(G)$ & $N_1=2f(m_4,m_2,m_3)$ \\
  \quad $\iso C_2\times C_{2^{m_2}}\times C_{2^{m_3}}\times C_{2^{m_4}}$   & $M_1=2N_1-72$ \\[8pt]
$\C{Z}(G)/\langle s\rangle$ & $\B{Q}\colon f(m_4,m_2,m_3)$ \\
 \quad $\iso C_{2^{m_2}}\times C_{2^{m_3}}\times C_{2^{m_4}}$  & $F\colon 2f(m_4,m_2,m_3)-36$ \\[8pt]
$\langle\C{Z}(G),x\rangle/\langle s\rangle$ & $\B{Q}\colon f(m_4,m_2+1,m_3)$ \\
  \quad $\iso C_{2^{m_2+1}}\times C_{2^{m_3}}\times C_{2^{m_4}}$   & \quad ($N_2=f(m_4,m_2+1,m_3)-f(m_4,m_2,m_3)$) \\
               & $F\colon 2f(m_4,m_2+1,m_3)-36$ \\
                     & \quad ($M_2=2N_2$)\\[8pt]
$\langle\C{Z}(G),y\rangle/\langle s\rangle$ & $\B{Q}\colon f(m_4,m_2,m_3+1)$ \\
  \quad $\iso C_{2^{m_2}}\times C_{2^{m_3+1}}\times C_{2^{m_4}}$    & \quad ($N_3=f(m_4,m_2,m_3+1)-f(m_4,m_2,m_3)$) \\
               & $F\colon 2f(m_4,m_3+1,m_2)-36$ \\
                     & \quad ($M_3=2N_3$) \\[8pt]
$\langle\C{Z}(G),xy\rangle/\langle s\rangle$ & $\B{Q}\colon f(m_4,m_2+1,m_3)$ \\
  \quad $\iso C_{2^{m_2+1}}\times C_{2^{m_3}}\times C_{2^{m_4}}$   & \quad ($N_4=N_2$) \\
                & $F\colon 2f(m_4,m_2+1,m_3)-36$ \\
                  & \quad ($M_4=M_2$)\\[8pt]
$G/G'$ & $N_0=f(m_4,m_2+1,m_3+1)$ \\
  \quad $\iso C_{2^{m_2+1}}\times C_{2^{m_3+1}}\times C_{2^{m_4}}$   & $M_0=2N_0-36$ \\ \hline
\end{tabular}
\end{center}
\bigskip

Since $N=N_1+2N_2+N_3=2f(m_4,m_2+1,m_3)+f(m_4,m_2,m_3+1)-f(m_4,m_2,m_3)
=2^{m_2+m_3}(11+5m_4-5m_2)-\tfrac23[5(4^{m_3})+4]$ and $M=M_1+2M_2+M_3=2N-72$,
$\B{Q}G$ is the direct sum of
\begin{equation*}
N_0=2^{m_2+m_3}(8+4m_4-4m_2)-\tfrac43[2(4^{m_3})+1]
\end{equation*}
fields and
\begin{align*}
N-N_0  &= 2f(m_4,m_2+1,m_3)+f(m_4,m_2,m_3+1)-f(m_4,m_2,m_3) \\
   & \qquad -f(m_4,m_2+1,m_3+1) = 2^{m_2+m_3}(3+m_4-m_2)-\tfrac23(4^{m_3}+2)
\end{align*}
quaternion algebras, while $FG$ is the direct sum of $M_0=2N_0-36$ fields
and $M-M_0=2(N-N_0)-36$ quaternion algebras.

\newpage
Now suppose $m_4>m_2=m_3$.
\bigskip
\begin{center}
\footnotesize
\begin{tabular}{ll}
\multicolumn{1}{c}{Group} & \multicolumn{1}{c}{No. of \B{Q}- and $F$-classes} \\ \hline \\[-6pt]
$\C{Z}(G)$  & $N_1=2f(m_4,m_2,m_2)$ \\
   \quad $\iso C_2\times C_{2^{m_2}}\times C_{2^{m_2}}\times C_{2^{m_4}}$  & $M_1=2N_1-72$ \\[8pt]
$\C{Z}(G)/\langle s\rangle$  & $\B{Q}\colon f(m_4,m_2,m_2)$ \\
  \quad $\iso C_{2^{m_2}}\times C_{2^{m_2}}\times C_{2^{m_4}}$  & $F\colon 2f(m_4,m_2,m_2)-36$ \\[8pt]
$\langle\C{Z}(G),x\rangle/\langle s\rangle$ & $\B{Q}\colon f(m_4,m_2+1,m_2)$ \\
  \quad $\iso C_{2^{m_2+1}}\times C_{2^{m_2}}\times C_{2^{m_4}}$   & \quad ($N_2=f(m_4,m_2+1,m_2)-f(m_4,m_2,m_2)$) \\
               & $F\colon 2f(m_4,m_2+1,m_2)-36$ \\
                     & \quad ($M_2=2N_2$)\\[8pt]
$\langle\C{Z}(G),y\rangle/\langle s\rangle$ & $\B{Q}\colon f(m_4,m_2+1,m_2)$ \\
   \quad $\iso C_{2^{m_2}}\times C_{2^{m_2+1}}\times C_{2^{m_4}}$  & \quad ($N_3=N_2$) \\
               & $F\colon 2f(m_4,m_2+1,m_2)-36$ \\
                     & \quad ($M_3=M_2$) \\[8pt]
$\langle\C{Z}(G),xy\rangle/\langle s\rangle$  & $\B{Q}\colon f(m_4,m_2+1,m_2)$ \\
  \quad $\iso C_{2^{m_2+1}}\times C_{2^{m_2}}\times C_{2^{m_4}}$   & \quad ($N_4=N_2$) \\
                & $F\colon 2f(m_4,m_2+1,m_2)-36$ \\
                  & \quad ($M_4=M_2$)\\[8pt]
$G/G'$ & $N_0=f(m_4,m_2+1,m_2+1)$ \\
   \quad $\iso C_{2^{m_2+1}}\times C_{2^{m_2+1}}\times C_{2^{m_4}}$  & $M_0=2N_0-36$ \\ \hline
\end{tabular}
\end{center}
\bigskip

Since $N=N_1+3N_2=3f(m_4,m_2+1,m_2)-f(m_4,m_2,m_2)
=4^{m_2}(5m_4-5m_2)+\tfrac13[23(4^{m_2})-8]$
and $M=M_1+3M_2=2N-72$,
$\B{Q}G$ is the direct sum of
\begin{equation*}
N_0=4^{m_2}(4m_4-4m_2)+\tfrac43(4^{m_2+1}-1)
\end{equation*}
fields and
\begin{align*}
N-N_0  &= 3f(m_4,m_2+1,m_2)-f(m_4,m_2,m_2)-f(m_4,m_2+1,m_2+1) \\
  &=4^{m_2}(m_4-m_2)+\tfrac13[7(4^{m_2})-4]
\end{align*}
quaternion algebras while $FG$ is the direct sum of $M_0=2N_0-36$ fields
and $M-M_0=2(N-N_0)-36$ quaternion algebras.

\newpage
Suppose $m_4=m_2>m_3$.
\bigskip
\begin{center}
\footnotesize
\begin{tabular}{ll}
\multicolumn{1}{c}{Group} & \multicolumn{1}{c}{No. of \B{Q}- and $F$-classes} \\ \hline \\[-6pt]
$\C{Z}(G)$ & $N_1=2f(m_2,m_2,m_3)$ \\
8   \quad $\iso C_2\times C_{2^{m_2}}\times C_{2^{m_3}}\times C_{2^{m_2}}$  & $M_1=2N_1-72$ \\[8pt]
$\C{Z}(G)/\langle s\rangle$ & $\B{Q}\colon f(m_2,m_2,m_3)$ \\
  \quad $\iso C_{2^{m_2}}\times C_{2^{m_3}}\times C_{2^{m_2}}$   & $F\colon 2f(m_2,m_2,m_3)-36$ \\[8pt]
$\langle\C{Z}(G),x\rangle/\langle s\rangle$  & $\B{Q}\colon f(m_2+1,m_2,m_3)$ \\
  \quad $\iso C_{2^{m_2+1}}\times C_{2^{m_3}}\times C_{2^{m_2}}$  & \quad ($N_2=f(m_2+1,m_2,m_3)-f(m_2,m_2,m_3)$) \\
               & $F\colon 2f(m_2+1,m_2,m_3)-36$ \\
                     & \quad ($M_2=2N_2$)\\[8pt]
$\langle\C{Z}(G),y\rangle/\langle s\rangle$ & $\B{Q}\colon f(m_2,m_2,m_3+1)$ \\
  \quad $\iso C_{2^{m_2}}\times C_{2^{m_3+1}}\times C_{2^{m_2}}$  & \quad ($N_3=f(m_2,m_2,m_3+1)-f(m_2,m_2,m_3)$) \\
               & $F\colon 2f(m_2,m_2,m_3+1)-36$ \\
                     & \quad ($M_3=2N_3$) \\[8pt]
$\langle\C{Z}(G),xy\rangle/\langle s\rangle$  & $\B{Q}\colon f(m_2+1,m_2,m_3)$ \\
  \quad $\iso C_{2^{m_2+1}}\times C_{2^{m_3}}\times C_{2^{m_2}}$  & \quad ($N_4=N_2$) \\
                & $F\colon 2f(m_2+1,m_2,m_3)-36$ \\
                  & \quad ($M_4=M_2$)\\[8pt]
$G/G'$  & $N_0=f(m_2+1,m_2,m_3+1)$ \\
 \quad $\iso C_{2^{m_2+1}}\times C_{2^{m_3+1}}\times C_{2^{m_2}}$   & $M_0=2N_0-36$ \\ \hline
\end{tabular}
\end{center}
\bigskip

Since $N=N_1+2N_2+N_3=2f(m_2+1,m_2,m_3)+f(m_2,m_2,m_3+1)-f(m_2,m_2,m_3)
=11(2^{m_2+m_3})-\tfrac23[5(4^{m_3})+4]$ and $M=M_1+2M_2+M_3=2N-72$,
$\B{Q}G$ is the direct sum of
\begin{equation*}
N_0=8(2^{m_2+m_3})-\tfrac43[2(4^{m_3})+1]
\end{equation*}
fields and
\begin{align*}
N-N_0  &= 2f(m_2+1,m_2,m_3)+f(m_2,m_2,m_3+1)-f(m_2,m_2,m_3)  \\
  & \qquad -f(m_2+1,m_2,m_3+1)=3(2^{m_2+m_3})-\tfrac23(4^{m_3}+2)
\end{align*}
quaternion algebras while $FG$ is the direct sum of $M_0=2N_0-36$ fields
and $M-M_0=2(N-N_0)-36$ quaternion algebras.

\newpage
Suppose $m_4=m_2=m_3$.
\bigskip
\begin{center}
\footnotesize
\begin{tabular}{ll}
\multicolumn{1}{c}{Group} & \multicolumn{1}{c}{No. of \B{Q}- and $F$-classes} \\ \hline \\[-6pt]
$\C{Z}(G)$ & $N_1=2f(m_2,m_2,m_2)$ \\
  \quad $\iso C_2\times C_{2^{m_2}}\times C_{2^{m_2}}\times C_{2^{m_2}}$   & $M_1=2N_1-72$ \\[8pt]
$\C{Z}(G)/\langle s\rangle$ & $\B{Q}\colon f(m_2,m_2,m_2)$ \\
 \quad $\iso C_{2^{m_2}}\times C_{2^{m_2}}\times C_{2^{m_2}}$   & $F\colon 2f(m_2,m_2,m_2)-36$ \\[8pt]
$\langle\C{Z}(G),x\rangle/\langle s\rangle$  & $\B{Q}\colon f(m_2+1,m_2,m_2)$ \\
  \quad $\iso C_{2^{m_2+1}}\times C_{2^{m_2}}\times C_{2^{m_2}}$   & \quad ($N_2=f(m_2+1,m_2,m_2)-f(m_2,m_2,m_2)$) \\
               & $F\colon 2f(m_2+1,m_2,m_2)-36$ \\
                     & \quad ($M_2=2N_2$)\\[8pt]
$\langle\C{Z}(G),y\rangle/\langle s\rangle$ & $\B{Q}\colon f(m_2+1,m_2,m_2)$ \\
  \quad $\iso C_{2^{m_2}}\times C_{2^{m_2+1}}\times C_{2^{m_2}}$   & \quad ($N_3=N_2$) \\
               & $F\colon 2f(m_2+1,m_2,m_2)-36$ \\
                     & \quad ($M_3=M_2$) \\[8pt]
$\langle\C{Z}(G),xy\rangle/\langle s\rangle$ & $\B{Q}\colon f(m_2+1,m_2,m_2)$ \\
   \quad $\iso C_{2^{m_2+1}}\times C_{2^{m_2}}\times C_{2^{m_2}}$ & \quad ($N_4=N_2$) \\
                & $F\colon 2f(m_2+1,m_2,m_2)-36$ \\
                  & \quad ($M_4=M_2$)\\[8pt]
$G/G'$ \\
  \quad $\iso C_{2^{m_2+1}}\times C_{2^{m_2+1}}\times C_{2^{m_2}}$ & $N_0=f(m_2+1,m_2+1,m_2)$ \\
               & $M_0=2N_0-36$ \\ \hline
\end{tabular}
\end{center}
\bigskip

Since $N=N_1+3N_2=3f(m_2+1,m_2,m_2)-f(m_2,m_2,m_2)=\tfrac13[23(4^{m_2})-8]$ and
$M=M_1+3M_2=2N-36$,
$\B{Q}G$ is the direct sum of
\begin{equation*}
N_0=\tfrac43(4^{m_2+1}-1)
\end{equation*}
fields and
\begin{align*}
N-N_0  &= 3f(m_2+1,m_2,m_2)-f(m_2,m_2,m_2)-f(m_2+1,m_2+1,m_2) \\
   &= \tfrac13[7(4^{m_2})-4]
\end{align*}
quaternion algebras, while $FG$ is the direct sum of $M_0=2N_0-36$ fields
and $M-M_0=2(N-N_0)-36$ quaternion algebras.

\newpage
We investigate the final case, $m_2\ge m_4\ge m_3$, first supposing $m_2\ge m_4>m_3$.
\bigskip
\begin{center}
\footnotesize
\begin{tabular}{ll}
\multicolumn{1}{c}{Group} & \multicolumn{1}{c}{No. of \B{Q}- and $F$-classes} \\ \hline \\[-6pt]
$\C{Z}(G)$ & $N_1=2f(m_2,m_4,m_3)$ \\
   \quad $\iso C_2\times C_{2^{m_2}}\times C_{2^{m_3}}\times C_{2^{m_4}}$  & $M_1=2N_1-72$ \\[8pt]
$\C{Z}(G)/\langle s\rangle$ & $\B{Q}\colon f(m_2,m_4,m_3)$ \\
  \quad $\iso C_{2^{m_2}}\times C_{2^{m_3}}\times C_{2^{m_4}}$   & $F\colon 2f(m_2,m_4,m_3)-36$ \\[8pt]
$\langle\C{Z}(G),x\rangle/\langle s\rangle$ & $\B{Q}\colon f(m_2+1,m_4,m_3)$ \\
  \quad $\iso C_{2^{m_2+1}}\times C_{2^{m_3}}\times C_{2^{m_4}}$    & \quad ($N_2=f(m_2+1,m_4,m_3)-f(m_2,m_4,m_3)$) \\
               & $F\colon 2f(m_2+1,m_4,m_3)-36$ \\
                     & \quad ($M_2=2N_2$)\\[8pt]
$\langle\C{Z}(G),y\rangle/\langle s\rangle$ b & $\B{Q}\colon f(m_2,m_4,m_3+1)$ \\
  \quad $\iso C_{2^{m_2}}\times C_{2^{m_3+1}}\times C_{2^{m_4}}$   & \quad ($N_3=f(m_2,m_4,m_3+1)-f(m_2,m_4,m_3)$) \\
               & $F\colon 2f(m_2,m_4,m_3+1)-36$ \\
                     & \quad ($M_3=2N_3$) \\[8pt]
$\langle\C{Z}(G),xy\rangle/\langle s\rangle$ & $\B{Q}\colon f(m_2+1,m_4,m_3)$ \\
  \quad $\iso C_{2^{m_2+1}}\times C_{2^{m_3}}\times C_{2^{m_4}}$   & \quad ($N_4=N_2$) \\
                & $F\colon 2f(m_2+1,m_4,m_3)-36$ \\
                  & \quad ($M_4=M_2$)\\[8pt]
$G/G'$ \\
 \quad $\iso C_{2^{m_2+1}}\times C_{2^{m_3+1}}\times C_{2^{m_4}}$ & $N_0=f(m_2+1,m_4,m_3+1)$ \\
               & $M_0=2N_0-36$ \\ \hline
\end{tabular}
\end{center}
\bigskip

Since $N=N_1+2N_2+N_3=2f(m_2+1,m_4,m_3)+f(m_2,m_4,m_3+1)-f(m_2,m_4,m_3)
=2^{m_3+m_4}(11+3m_2-3m_4)-\tfrac23[5(4^{m_3})+4]$ and $M=M_1+2M_2+M_3=2N-72$,
$\B{Q}G$ is the direct sum of
\begin{equation*}
N_0=2^{m_3+m_4}(8+2m_2-2m_4)-\tfrac43[2(4^{m_3})+1]
\end{equation*}
fields and
\begin{align*}
N-N_0  &= 2f(m_2+1,m_4,m_3)+f(m_2,m_4,m_3+1)-f(m_2,m_4,m_3) \\
   & \qquad -f(m_2+1,m_4,m_3+1) \\
   & =2^{m_3+m_4}(3+m_2-m_4)-\tfrac23(4^{m_3}+2)
\end{align*}
quaternion algebras, while $FG$ is the direct sum of $M_0=2N_0-36$ fields
and $M-M_0=2(N-N_0)-36$ quaternion algebras.

\newpage
Now suppose $m_2>m_4=m_3$.
\bigskip
\begin{center}
\footnotesize
\begin{tabular}{ll}
\multicolumn{1}{c}{Group} & \multicolumn{1}{c}{No. of \B{Q}- and $F$-classes} \\ \hline \\[-6pt]
$\C{Z}(G)$  & $N_1=2f(m_2,m_3,m_3)$ \\
   \quad $\iso C_2\times C_{2^{m_2}}\times C_{2^{m_3}}\times C_{2^{m_3}}$ & $M_1=2N_1-72$ \\[8pt]
$\C{Z}(G)/\langle s\rangle \iso C_{2^{m_2}}\times C_{2^{m_3}}\times C_{2^{m_3}}$ & $\B{Q}\colon f(m_2,m_3,m_3)$ \\
               & $F\colon 2f(m_2,m_3,m_3)-36$ \\[8pt]
$\langle\C{Z}(G),x\rangle/\langle s\rangle$ & $\B{Q}\colon f(m_2+1,m_3,m_3)$ \\
  \quad $\iso C_{2^{m_2+1}}\times C_{2^{m_3}}\times C_{2^{m_3}}$    & \quad ($N_2=f(m_2+1,m_3,m_3)-f(m_2,m_3,m_3)$) \\
               & $F\colon 2f(m_2+1,m_3,m_3)-36$ \\
                     & \quad ($M_2=2N_2$)\\[8pt]
$\langle\C{Z}(G),y\rangle/\langle s\rangle$ & $\B{Q}\colon f(m_2,m_3+1,m_3)$ \\
   \quad $\iso C_{2^{m_2}}\times C_{2^{m_3+1}}\times C_{2^{m_3}}$  & \quad ($N_3=f(m_2,m_3+1,m_3)-f(m_2,m_3,m_3)$) \\
               & $F\colon 2f(m_2,m_3+1,m_3)-36$ \\
                     & \quad ($M_3=2N_3$) \\[8pt]
$\langle\C{Z}(G),xy\rangle/\langle s\rangle$ & $\B{Q}\colon f(m_2+1,m_3,m_3)$ \\
  \quad $\iso C_{2^{m_2+1}}\times C_{2^{m_3}}\times C_{2^{m_3}}$   & \quad ($N_4=N_2$) \\
                & $F\colon 2f(m_2+1,m_3,m_3)-36$ \\
                  & \quad ($M_4=M_2$)\\[8pt]
$G/G'$ & $N_0=f(m_2+1,m_3+1,m_3)$ \\
   \quad $\iso C_{2^{m_2+1}}\times C_{2^{m_3+1}}\times C_{2^{m_3}}$  & $M_0=2N_0-36$ \\ \hline
\end{tabular}
\end{center}
\bigskip

Since $N=N_1+2N_2+N_3=2f(m_2+1,m_3,m_3)+f(m_2,m_3+1,m_3)-f(m_2,m_3,m_3)
=4^{m_3}(3m_2-3m_3)+\tfrac13[23(4^{m_3})-8]$ and $M=M_1+2M_2+M_3=2N-72$,
$\B{Q}G$ is the direct sum of
\begin{equation*}
N_0=4^{m_3}(2m_2-2m_3)+\tfrac43(4^{m_3+1}-1)
\end{equation*}
fields and
\begin{align*}
N-N_0  &= 2f(m_2+1,m_3,m_3)+f(m_2,m_3+1,m_3)-f(m_2,m_3,m_3) \\
   & \qquad -f(m_2+1,m_3+1,m_3)=4^{m_3}(m_2-m_3)+\tfrac13[7(4^{m_3})-4]
\end{align*}
quaternion algebras while $FG$ is the direct sum of $M_0=2N_0-36$ fields
and $M-M_0=2(N-N_0)-36$ quaternion algebras.

\medskip\par\noindent (vii) $m_1=2$, $m_2=m_3=m_4=1$, $s=t_1^2$.

\begin{center}
\small

\end{center}
\medskip

Since $N=N_1+2N_2+N_3=2^{m_2+3}(11+5m_4-5m_2)-184$ and $M=M_1+2M_2+M_3=2^{m_2+4}(11+5m_4-5m_2)-376$,
$\B{Q}G$ is the direct sum of $N_0=2^{m_2+5}(2+m_4-m_2)-88$ fields
and $N-N_0=2^{m_2+3}(3+m_4-m_2)-96$
quaternion algebras while $FG$ is the direct sum of $M_0=2^{m_2+6}(2+m_4-m_2)-248$ fields
and $M-M_0=2^{m_2+4}(3+m_4-m_2)-128$ quaternion algebras.

\bigskip\par\noindent (xii.f) $m_1=1$, $m_2\ge3$, $m_3\ge3$, $m_4\ge3$, $s=t_1$.
Because of the symmetry in $x$ and $y$, we may assume that $m_2\ge m_3$, but this still leaves three
possibilities.  As before, we let $f(a,b,c)=\tfrac73(4^c-1)+3(2^c)(2^b-2^c)+(a-b)2^{b+c}+1$
denote the number of cyclic subgroups of the direct
product $C_{2^a}\times C_{2^b}\times C_{2^c}$, with $a\ge b\ge c$.  We also let
\begin{multline*}
g(a,b,c,d)  \\
=\tfrac{15}{7}(8^d-1)+\tfrac73(2^d)(4^c-4^d)+3(2^{c+d})(2^b-2^c)+(a-b)2^{b+c+d}+1
\end{multline*}
denote the number of cyclic subgroups in $C_{2^a}\times C_{2^b}\times C_{2^c}\times C_{2^d}$,
$a\ge b\ge c\ge d$.

\newpage
We investigate the first case, $m_2\ge m_3\ge m_4$, in two steps,
supposing first that $m_2>m_3\ge m_4$.
\bigskip
\begin{center}
\footnotesize
\begin{tabular}{ll}
\multicolumn{1}{c}{Group} & \multicolumn{1}{c}{No. of \B{Q}- and $F$-classes} \\ \hline \\[-6pt]
$\C{Z}(G)$ & $N_1=g(m_2,m_3,m_4,2)$ \\
  \quad $\iso C_4\times C_{2^{m_2}}\times C_{2^{m_3}}\times C_{2^{m_4}}$  & $M_1=2N_1-136$ \\[8pt]
$\C{Z}(G)/\langle s\rangle$ & $\B{Q}\colon 2f(m_2,m_3,m_4)$ \\
  \quad $\iso C_2\times C_{2^{m_2}}\times C_{2^{m_3}}\times C_{2^{m_4}}$  & $F\colon 4f(m_2,m_3,m_4)-72$ \\[8pt]
$\langle\C{Z}(G),x\rangle/\langle s\rangle$  & $\B{Q}\colon 2f(m_2+1,m_3,m_4)$ \\
  \quad $\iso C_2\times C_{2^{m_2+1}}\times C_{2^{m_3}}\times C_{2^{m_4}}$  & \quad ($N_2=2f(m_2+1,m_3,m_4)-2f(m_2,m_3,m_4)$) \\
               & $F\colon 4f(m_2+1,m_3,m_4)-72$ \\
                     & \quad ($M_2=2N_2$)\\[8pt]
$\langle\C{Z}(G),y\rangle/\langle s\rangle$  & $\B{Q}\colon 2f(m_2,m_3+1,m_4)$ \\
  \quad $\iso C_2\times C_{2^{m_2}}\times C_{2^{m_3+1}}\times C_{2^{m_4}}$  & \quad ($N_3=2f(m_2,m_3+1,m_4)-2f(m_2,m_3,m_4)$) \\
               & $F\colon 4f(m_2,m_3+1,m_4)-72$ \\
                     & \quad ($M_3=2N_3$) \\[8pt]
$\langle\C{Z}(G),xy\rangle/\langle s\rangle$ & $\B{Q}\colon 2f(m_2+1,m_3,m_4)$ \\
  \quad $\iso C_2\times C_{2^{m_2+1}}\times C_{2^{m_3}}\times C_{2^{m_4}}$ & \quad ($N_4=N_2$) \\
                & $F\colon 4f(m_2+1,m_3,m_4)-72$ \\
                     & \quad ($M_4=M_2$)\\[8pt]
$G/G'$ & $N_0=2f(m_2+1,m_3+1,m_4)$ \\
  \quad $\iso C_2\times C_{2^{m_2+1}}\times C_{2^{m_3+1}}\times C_{2^{m_4}}$                & $M_0=2N_0-72$ \\ \hline
\end{tabular}
\end{center}
\bigskip

Since $N=N_1+2N_2+N_3=g(m_2,m_3,m_4,2)+4f(m_2+1,m_3,m_4)+2f(m_2,m_3+1,m_4)-6f(m_2,m_3,m_4)
=2^{m_3+m_4}(18+6m_2-6m_3)-\tfrac83(4^{m_4}+5)$ and $M=M_1+2M_2+M_3=2N-136$,
$\B{Q}G$ is the direct sum of
\begin{equation*}
N_0=2^{m_3+m_4}(12+4m_2-4m_3)-\tfrac43(4^{m_4}+2)
\end{equation*}
fields and
\begin{equation*}
N-N_0=2^{m_3+m_4}(6+2m_2-2m_3)-\tfrac43(4^{m_4}+8)
\end{equation*}
quaternion algebras while $FG$ is the direct sum of $M_0=2N_0-72$ fields
and $M-M_0=2(N-N_0)-64$ quaternion algebras.

\newpage
Now suppose $m_2=m_3\ge m_4$.
\bigskip
\begin{center}
\footnotesize
\begin{tabular}{ll}
\multicolumn{1}{c}{Group} & \multicolumn{1}{c}{No. of \B{Q}- and $F$-classes} \\ \hline \\[-6pt]
$\C{Z}(G)$ & $N_1=g(m_2,m_2,m_4,2)$ \\
  \quad $\iso C_4\times C_{2^{m_2}}\times C_{2^{m_2}}\times C_{2^{m_4}}$  & $M_1=2N_1-136$ \\[8pt]
$\C{Z}(G)/\langle s\rangle$  & $\B{Q}\colon 2f(m_2,m_2,m_4)$ \\
  \quad $\iso C_2\times C_{2^{m_2}}\times C_{2^{m_2}}\times C_{2^{m_4}}$   & $F\colon 4f(m_2,m_2,m_4)-72$ \\[8pt]
$\langle\C{Z}(G),x\rangle/\langle s\rangle$ & $\B{Q}\colon 2f(m_2+1,m_2,m_4)$ \\
  \quad $\iso C_2\times C_{2^{m_2+1}}\times C_{2^{m_2}}\times C_{2^{m_4}}$   & \quad ($N_2=2f(m_2+1,m_2,m_4)-2f(m_2,m_2,m_4)$) \\
               & $F\colon 4f(m_2+1,m_2,m_4)-72$ \\
                     & \quad ($M_2=2N_2$)\\[8pt]
$\langle\C{Z}(G),y\rangle/\langle s\rangle$  & $\B{Q}\colon 2f(m_2+1,m_2,m_4)$ \\
  \quad $\iso C_2\times C_{2^{m_2}}\times C_{2^{m_2+1}}\times C_{2^{m_4}}$  & \quad ($N_3=N_2$) \\
               & $F\colon 4f(m_2+1,m_2,m_4)-72$ \\
                     & \quad ($M_3=2N_2$) \\[8pt]
$\langle\C{Z}(G),xy\rangle/\langle s\rangle$  & $\B{Q}\colon 2f(m_2+1,m_2,m_4)$ \\
  \quad $\iso C_2\times C_{2^{m_2+1}}\times C_{2^{m_2}}\times C_{2^{m_4}}$  & \quad ($N_4=N_2$) \\
                & $F\colon 4f(m_2+1,m_2,m_4)-72$ \\
                     & \quad ($M_4=M_2$)\\[8pt]
$G/G'$  & $N_0=2f(m_2+1,m_2+1,m_4)$ \\
  \quad $\iso C_2\times C_{2^{m_2+1}}\times C_{2^{m_2+1}}\times C_{2^{m_4}}$ & $M_0=2N_0-72$ \\ \hline
\end{tabular}
\end{center}
\bigskip

Here, we have $N=N_1+3N_2=g(m_2,m_2,m_4,2)+6f(m_2+1,m_2,m_4)-6f(m_2,m_2,m_4)
=18(2^{m_2+m_4})-\tfrac83(4^{m_4}+5)$ and $M=M_1+2M_2+M_3=2N-136$, so
$\B{Q}G$ is the direct sum of
\begin{equation*}
N_0=12(2^{m_2+m_4})-\tfrac43(4^{m_4}+2)
\end{equation*}
fields and
\begin{equation*}
N-N_0=6(2^{m_2+m_4})-\tfrac43(4^{m_4}+8)
\end{equation*}
quaternion algebras, while $FG$ is the direct sum of $M_0=2N_0-72$ fields
and $M-M_0=2(N-N_0)-64$ quaternion algebras.

\newpage
The case $m_4\ge m_2\ge m_3$ we again split into several subcases, considering the possibility
$m_4>m_2>m_3$ first.
\bigskip
\begin{center}
\footnotesize
\begin{tabular}{ll}
\multicolumn{1}{c}{Group} & \multicolumn{1}{c}{No. of \B{Q}- and $F$-classes} \\ \hline \\[-6pt]
$\C{Z}(G)$ & $N_1=g(m_4,m_2,m_3,2)$ \\
  \quad $\iso C_4\times C_{2^{m_2}}\times C_{2^{m_3}}\times C_{2^{m_4}}$  & $M_1=2N_1-136$ \\[8pt]
$\C{Z}(G)/\langle s\rangle$ & $\B{Q}\colon 2f(m_4,m_2,m_3)$ \\
 \quad $\iso C_2\times C_{2^{m_2}}\times C_{2^{m_3}}\times C_{2^{m_4}}$   & $F\colon 4f(m_4,m_2,m_3)-72$ \\[8pt]
$\langle\C{Z}(G),x\rangle/\langle s\rangle$ & $\B{Q}\colon 2f(m_4,m_2+1,m_3)$ \\
  \quad $\iso C_2\times C_{2^{m_2+1}}\times C_{2^{m_3}}\times C_{2^{m_4}}$  & \quad ($N_2=2f(m_4,m_2+1,m_3)-2f(m_4,m_2,m_3)$) \\
               & $F\colon 4f(m_4,m_2+1,m_3)-72$ \\
                     & \quad ($M_2=2N_2$)\\[8pt]
$\langle\C{Z}(G),y\rangle/\langle s\rangle$ & $\B{Q}\colon 2f(m_4,m_2,m_3+1)$ \\
  \quad $\iso C_2\times C_{2^{m_2}}\times C_{2^{m_3+1}}\times C_{2^{m_4}}$  & \quad ($N_3=2f(m_4,m_2,m_3+1)-2f(m_4,m_2,m_3)$) \\
               & $F\colon 4f(m_4,m_3+1,m_2)-72$ \\
                     & \quad ($M_3=2N_3$) \\[8pt]
$\langle\C{Z}(G),xy\rangle/\langle s\rangle$ & $\B{Q}\colon 2f(m_4,m_2+1,m_3)$ \\
  \quad $\iso C_2\times C_{2^{m_2+1}}\times C_{2^{m_3}}\times C_{2^{m_4}}$    & \quad ($N_4=N_2$) \\
                & $F\colon 4f(m_4,m_2+1,m_3)-72$ \\
                  & \quad ($M_4=M_2$)\\[8pt]
$G/G'$ & $N_0=2f(m_4,m_2+1,m_3+1)$ \\
  \quad $\iso C_2\times C_{2^{m_2+1}}\times C_{2^{m_3+1}}\times C_{2^{m_4}}$  & $M_0=2N_0-72$ \\ \hline
\end{tabular}
\end{center}
\bigskip

Since $N=N_1+2N_2+N_3=g(m_4,m_2,m_3,2)+4f(m_4,m_2+1,m_3)+2f(m_4,m_2,m_3+1)-6f(m_4,m_2,m_3)
=2^{m_2+m_3}(22+10m_4-10m_2)-\tfrac{20}{3}(4^{m_3}+2)$ and $M=M_1+2M_2+M_3=2N-136$,
$\B{Q}G$ is the direct sum of
\begin{equation*}
N_0=2^{m_2+m_3}(16+8m_4-8m_2)-\tfrac83[2(4^{m_3})+1]
\end{equation*}
fields and
\begin{equation*}
N-N_0=2^{m_2+m_3}(6+2m_4-2m_2)-\tfrac43(4^{m_3}+8)
\end{equation*}
quaternion algebras, while $FG$ is the direct sum of $M_0=2N_0-72$ fields
and $M-M_0=2(N-N_0)-64$ quaternion algebras.

\newpage
Now suppose $m_4>m_2=m_3$.
\bigskip
\begin{center}
\footnotesize
\begin{tabular}{ll}
\multicolumn{1}{c}{Group} & \multicolumn{1}{c}{No. of \B{Q}- and $F$-classes} \\ \hline \\[-6pt]
$\C{Z}(G)$ & $N_1=g(m_4,m_2,m_2,2)$ \\
   \quad $\iso C_4\times C_{2^{m_2}}\times C_{2^{m_2}}\times C_{2^{m_4}}$   & $M_1=2N_1-136$ \\[8pt]
$\C{Z}(G)/\langle s\rangle$  & $\B{Q}\colon 2f(m_4,m_2,m_2)$ \\
  \quad $\iso C_2\times C_{2^{m_2}}\times C_{2^{m_2}}\times C_{2^{m_4}}$    & $F\colon 4f(m_4,m_2,m_2)-72$ \\[8pt]
$\langle\C{Z}(G),x\rangle/\langle s\rangle$ & $\B{Q}\colon 2f(m_4,m_2+1,m_2)$ \\
  \quad $\iso C_2\times C_{2^{m_2+1}}\times C_{2^{m_2}}\times C_{2^{m_4}}$   & \quad ($N_2=2f(m_4,m_2+1,m_2)-2f(m_4,m_2,m_2)$) \\
               & $F\colon 4f(m_4,m_2+1,m_2)-72$ \\
                     & \quad ($M_2=2N_2$)\\[8pt]
$\langle\C{Z}(G),y\rangle/\langle s\rangle$ & $\B{Q}\colon 2f(m_4,m_2+1,m_2)$ \\
   \quad $\iso C_2\times C_{2^{m_2}}\times C_{2^{m_2+1}}\times C_{2^{m_4}}$  & \quad ($N_3=N_2$) \\
               & $F\colon 4f(m_4,m_2+1,m_2)-72$ \\
                     & \quad ($M_3=M_2$) \\[8pt]
$\langle\C{Z}(G),xy\rangle/\langle s\rangle$  & $\B{Q}\colon 2f(m_4,m_2+1,m_2)$ \\
  \quad $\iso C_2\times C_{2^{m_2+1}}\times C_{2^{m_2}}\times C_{2^{m_4}}$ & \quad ($N_4=N_2$) \\
                & $F\colon 4f(m_4,m_2+1,m_2)-72$ \\
                  & \quad ($M_4=M_2$)\\[8pt]
$G/G'$ & $N_0=2f(m_4,m_2+1,m_2+1)$ \\
   \quad $\iso C_2\times C_{2^{m_2+1}}\times C_{2^{m_2+1}}\times C_{2^{m_4}}$ & $M_0=2N_0-72$ \\ \hline
\end{tabular}
\end{center}
\bigskip

In this situation, we have  $N=N_1+3N_2=g(m_4,m_2,m_2,2)+6f(m_4,m_2+1,m_2)
-6f(m_4,m_2,m_2)=4^{m_2}(10m_4-10m_2)+\tfrac23[23(4^{m_2})-20]$ and $M=M_1+2M_2+M_3=2N-136$,
$\B{Q}G$ is the direct sum of
\begin{equation*}
N_0 = 2f(m_4,m_2+1,m_2+1)=4^{m_2}(8m_4-8m_2)+\tfrac43[8(4^{m2})-2]
\end{equation*}
fields and
\begin{equation*}
N-N_0=4^{m_2}(2m_4-2m_2)+\tfrac23[7(4^{m_2})-16]
\end{equation*}
quaternion algebras while $FG$ is the direct sum of $M_0=2N_0-72$ fields
and $M-M_0=2(N-N_0)-64$ quaternion algebras.

\newpage
Suppose $m_4=m_2>m_3$.
\bigskip
\begin{center}
\footnotesize
\begin{tabular}{ll}
\multicolumn{1}{c}{Group} & \multicolumn{1}{c}{No. of \B{Q}- and $F$-classes} \\ \hline \\[-6pt]
$\C{Z}(G)$ & $N_1=g(m_2,m_2,m_3,2)$ \\
   \quad $\iso C_4\times C_{2^{m_2}}\times C_{2^{m_3}}\times C_{2^{m_2}}$  & $M_1=2N_1-136$ \\[8pt]
$\C{Z}(G)/\langle s\rangle$ & $\B{Q}\colon 2f(m_2,m_2,m_3)$ \\
  \quad $\iso C_2\times C_{2^{m_2}}\times C_{2^{m_3}}\times C_{2^{m_2}}$  & $F\colon 4f(m_2,m_2,m_3)-72$ \\[8pt]
$\langle\C{Z}(G),x\rangle/\langle s\rangle$ & $\B{Q}\colon 2f(m_2+1,m_2,m_3)$ \\
  \quad $\iso C_2\times C_{2^{m_2+1}}\times C_{2^{m_3}}\times C_{2^{m_2}}$ & \quad ($N_2=2f(m_2+1,m_2,m_3)-2f(m_2,m_2,m_3)$) \\
               & $F\colon 4f(m_2+1,m_2,m_3)-72$ \\
                     & \quad ($M_2=2N_2$)\\[8pt]
$\langle\C{Z}(G),y\rangle/\langle s\rangle$ & $\B{Q}\colon 2f(m_2,m_2,m_3+1)$ \\
  \quad $\iso C_2\times C_{2^{m_2}}\times C_{2^{m_3+1}}\times C_{2^{m_2}}$  & \quad ($N_3=2f(m_2,m_2,m_3+1)-2f(m_2,m_2,m_3)$) \\
               & $F\colon 4f(m_2,m_2,m_3+1)-72$ \\
                     & \quad ($M_3=2N_3$) \\[8pt]
$\langle\C{Z}(G),xy\rangle/\langle s\rangle$ & $\B{Q}\colon 2f(m_2+1,m_2,m_3)$ \\
  \quad $\iso C_2\times C_{2^{m_2+1}}\times C_{2^{m_3}}\times C_{2^{m_2}}$  & \quad ($N_4=N_2$) \\
                & $F\colon 4f(m_2+1,m_2,m_3)-72$ \\
                  & \quad ($M_4=M_2$)\\[8pt]
$G/G'$ & $N_0=2f(m_2+1,m_2,m_3+1)$ \\
 \quad $\iso C_2\times C_{2^{m_2+1}}\times C_{2^{m_3+1}}\times C_{2^{m_2}}$  & $M_0=2N_0-72$ \\ \hline
\end{tabular}
\end{center}
\bigskip

Since $N=N_1+2N_2+N_3=g(m_2,m_2,m_3,2)+4f(m_2+1,m_2,m_3)+2f(m_2,m_2,m_3+1)-6f(m_2,m_2,m_3)
=22(2^{m_2+m_3})-\tfrac{20}{3}(4^{m_3}+2)$
and $M=M_1+2M_2+M_3=2N-136$, $\B{Q}G$ is the direct sum of
\begin{equation*}
N_0=2f(m_2+1,m_2,m_3+1)=16(2^{m_2+m_3})-\tfrac83[2(4^{m_3})+1]
\end{equation*}
fields and
\begin{equation*}
N-N_0=6(2^{m_2+m_3})-\tfrac43(4^{m_3}+8)
\end{equation*}
quaternion algebras while $FG$ is the direct sum of $M_0=2N_0-72$ fields
and $M-M_0=2(N-N_0)-64$ quaternion algebras.

\newpage
Suppose $m_4=m_2=m_3$.
\bigskip
\begin{center}
\footnotesize
\begin{tabular}{ll}
\multicolumn{1}{c}{Group} & \multicolumn{1}{c}{No. of \B{Q}- and $F$-classes} \\ \hline \\[-6pt]
$\C{Z}(G)$  & $N_1=g(m_2,m_2,m_2,2)$ \\ 
  \quad $\iso C_4\times C_{2^{m_2}}\times C_{2^{m_2}}\times C_{2^{m_2}}$ & $M_1=2N_1-136$ \\[8pt]
$\C{Z}(G)/\langle s\rangle$ & $\B{Q}\colon 2f(m_2,m_2,m_2)$ \\
 \quad $\iso C_2\times C_{2^{m_2}}\times C_{2^{m_2}}\times C_{2^{m_2}}$   & $F\colon 4f(m_2,m_2,m_2)-72$ \\[8pt]
$\langle\C{Z}(G),x\rangle/\langle s\rangle$ & $\B{Q}\colon 2f(m_2+1,m_2,m_2)$ \\
  \quad $\iso C_2\times C_{2^{m_2+1}}\times C_{2^{m_2}}\times C_{2^{m_2}}$ & \quad ($N_2=2f(m_2+1,m_2,m_2)-2f(m_2,m_2,m_2)$) \\
               & $F\colon 4f(m_2+1,m_2,m_2)-72$ \\
                     & \quad ($M_2=2N_2$)\\[8pt]
$\langle\C{Z}(G),y\rangle/\langle s\rangle$ & $\B{Q}\colon 2f(m_2+1,m_2,m_2)$ \\
  \quad $\iso C_{2^{m_2}}\times C_{2^{m_2+1}}\times C_{2^{m_2}}$   & \quad ($N_3=N_2$) \\
               & $F\colon 4f(m_2+1,m_2,m_2)-72$ \\
                     & \quad ($M_3=M_2$) \\[8pt]
$\langle\C{Z}(G),xy\rangle/\langle s\rangle$  & $\B{Q}\colon 2f(m_2+1,m_2,m_2)$ \\
   \quad $\iso C_2\times C_{2^{m_2+1}}\times C_{2^{m_2}}\times C_{2^{m_2}}$   & \quad ($N_4=N_2$) \\
                & $F\colon 4f(m_2+1,m_2,m_2)-72$ \\
                  & \quad ($M_4=M_2$)\\[8pt]
$G/G'$ & $N_0=2f(m_2+1,m_2+1,m_2)$ \\
  \quad $\iso C_2\times C_{2^{m_2+1}}\times C_{2^{m_2+1}}\times C_{2^{m_2}}$  & $M_0=2N_0-72$ \\ \hline
\end{tabular}
\end{center}
\bigskip

Here, we have $N=N_1+3N_2=g(m_2,m_2,m_2,2)+6f(m_2+1,m_2,m_2)-6f(m_2,m_2,m_2)
=\tfrac23[23(4^{m_2})-20$ and $M=M_1+2M_2+M_3=2N-136$,
$\B{Q}G$ is the direct sum of
\begin{equation*}
N_0=2f(m_2+1,m_2+1,m_2)=\tfrac83(4^{m_2+1}-1)
\end{equation*}
fields and
\begin{equation*}
N-N_0=\tfrac23[7(4^{m_2})-16]
\end{equation*}
quaternion algebras, while $FG$ is the direct sum of $M_0=2N_0-72$ fields
and $M-M_0=2(N-N_0)-64$ quaternion algebras.

\newpage
We investigate the final case, $m_2\ge m_4\ge m_3$, first supposing $m_2\ge m_4>m_3$.
\bigskip
\begin{center}
\footnotesize
\begin{tabular}{ll}
\multicolumn{1}{c}{Group} & \multicolumn{1}{c}{No. of \B{Q}- and $F$-classes} \\ \hline \\[-6pt]
$\C{Z}(G)$ & $N_1=g(m_2,m_4,m_3,2)$ \\
   \quad $\iso C_4\times C_{2^{m_2}}\times C_{2^{m_3}}\times C_{2^{m_4}}$  & $M_1=2N_1-136$ \\[8pt]
$\C{Z}(G)/\langle s\rangle$ & $\B{Q}\colon 2f(m_2,m_4,m_3)$ \\
  \quad $\iso C_2\times C_{2^{m_2}}\times C_{2^{m_3}}\times C_{2^{m_4}}$   & $F\colon 4f(m_2,m_4,m_3)-72$ \\[8pt]
$\langle\C{Z}(G),x\rangle/\langle s\rangle$ & $\B{Q}\colon 2f(m_2+1,m_4,m_3)$ \\
  \quad $\iso C_2\times C_{2^{m_2+1}}\times C_{2^{m_3}}\times C_{2^{m_4}}$   & \quad ($N_2=2f(m_2+1,m_4,m_3)-2f(m_2,m_4,m_3)$) \\
               & $F\colon 4f(m_2+1,m_4,m_3)-72$ \\
                     & \quad ($M_2=2N_2$)\\[8pt]
$\langle\C{Z}(G),y\rangle/\langle s\rangle$ b & $\B{Q}\colon 2f(m_2,m_4,m_3+1)$ \\
  \quad $\iso C_2\times C_{2^{m_2}}\times C_{2^{m_3+1}}\times C_{2^{m_4}}$ & \quad ($N_3=2f(m_2,m_4,m_3+1)-2f(m_2,m_4,m_3)$) \\
               & $F\colon 4f(m_2,m_4,m_3+1)-72$ \\
                     & \quad ($M_3=2N_3$) \\[8pt]
$\langle\C{Z}(G),xy\rangle/\langle s\rangle$  & $\B{Q}\colon 2f(m_2+1,m_4,m_3)$ \\
  \quad $\iso C_2\times C_{2^{m_2+1}}\times C_{2^{m_3}}\times C_{2^{m_4}}$    & \quad ($N_4=N_2$) \\
                & $F\colon 4f(m_2+1,m_4,m_3)-72$ \\
                  & \quad ($M_4=M_2$)\\[8pt]
$G/G'$ & $N_0=2f(m_2+1,m_4,m_3+1)$ \\
 \quad $\iso C_2\times C_{2^{m_2+1}}\times C_{2^{m_3+1}}\times C_{2^{m_4}}$ & $M_0=2N_0-72$ \\ \hline
\end{tabular}
\end{center}
\bigskip

Since $N=N_1+2N_2+N_3=g(m_2,m_4,m_3,2)+4f(m_2+1,m_4,m_3+1)+2f(m_2,m_4,m_3+1)
-6f(m_2,m_4,m_3)=2^{m_3+m_4}(22+6m_2-6m_4)-\tfrac{20}{3}(4^{m_3}+2)$ and $M=M_1+2M_2+M_3=2N-136$,
$\B{Q}G$ is the direct sum of
\begin{equation*}
N_0=2f(m_2+1,m_4,m_3+1)=2^{m_3+m_4}(16+4m_2-4m_4)+\tfrac83[2(4^{m_3})+1]
\end{equation*}
fields and
\begin{equation*}
N-N_0=2^{m_3+m_4}(14+4m_2-4m_4)-\tfrac{16}{3}(4^{m_3}+2)
\end{equation*}
quaternion algebras, while $FG$ is the direct sum of $M_0=2N_0-72$ fields
and $M-M_0=2(N-N_0)-64$ quaternion algebras.

\newpage
We conclude this long investigation of small cases with the
supposition $m_2>m_4=m_3$.
\begin{center}
\footnotesize
\begin{tabular}{ll}
\multicolumn{1}{c}{Group} & \multicolumn{1}{c}{No. of \B{Q}- and $F$-classes} \\ \hline \\[-6pt]
$\C{Z}(G)$ & $N_1=g(m_2,m_3,m_3,2)$ \\
   \quad $\iso C_4\times C_{2^{m_2}}\times C_{2^{m_3}}\times C_{2^{m_3}}$  & $M_1=2N_1-136$ \\[8pt]
$\C{Z}(G)/\langle s\rangle \iso C_2\times C_{2^{m_2}}\times C_{2^{m_3}}\times C_{2^{m_3}}$ & $\B{Q}\colon 2f(m_2,m_3,m_3)$ \\
               & $F\colon 4f(m_2,m_3,m_3)-72$ \\[8pt]
$\langle\C{Z}(G),x\rangle/\langle s\rangle$ & $\B{Q}\colon 2f(m_2+1,m_3,m_3)$ \\
  \quad $\iso C_2\times C_{2^{m_2+1}}\times C_{2^{m_3}}\times C_{2^{m_3}}$  & \quad ($N_2=2f(m_2+1,m_3,m_3)-2f(m_2,m_3,m_3)$) \\
               & $F\colon 4f(m_2+1,m_3,m_3)-72$ \\
                     & \quad ($M_2=2N_2$)\\[8pt]
$\langle\C{Z}(G),y\rangle/\langle s\rangle$ & $\B{Q}\colon 2f(m_2,m_3+1,m_3)$ \\
   \quad $\iso C_2\times C_{2^{m_2}}\times C_{2^{m_3+1}}\times C_{2^{m_3}}$  & \quad ($N_3=2f(m_2,m_3+1,m_3)-2f(m_2,m_3,m_3)$) \\
               & $F\colon 4f(m_2,m_3+1,m_3)-72$ \\
                     & \quad ($M_3=2N_3$) \\[8pt]
$\langle\C{Z}(G),xy\rangle/\langle s\rangle$  & $\B{Q}\colon 2f(m_2+1,m_3,m_3)$ \\
  \quad $\iso C_2\times C_{2^{m_2+1}}\times C_{2^{m_3}}\times C_{2^{m_3}}$  & \quad ($N_4=N_2$) \\
                & $F\colon 4f(m_2+1,m_3,m_3)-72$ \\
                  & \quad ($M_4=M_2$)\\[8pt]
$G/G'$  & $N_0=2f(m_2+1,m_3+1,m_3)$ \\
   \quad $\iso C_2\times C_{2^{m_2+1}}\times C_{2^{m_3+1}}\times C_{2^{m_3}}$ & $M_0=2N_0-72$ \\ \hline
\end{tabular}
\end{center}
\medskip

Since $N=N_1+2N_2+N_3=g(m_2,m_3,m_3,2)+4f(m_2+1,m_3+1,m_3)+2f(m_2,m_3+1,m_3)
-6f(m_2,m_3,m_3)=4^{m_3}(6m_2-6m_3)+\tfrac23[23(4^{m_3}-20)$ and $M=M_1+2M_2+M_3=2N-136$,
$\B{Q}G$ is the direct sum of
\begin{equation*}
N_0=2f(m_2+1,m_3+1,m_3)=4^{m_3}(4m_2-4m_3)+\tfrac83(4^{m_3+1}-1)
\end{equation*}
fields and
\begin{equation*}
N-N_0=4^{m_3}(2m_2-2m_3)+\tfrac23[7(4^{m_3})-16]
\end{equation*}
quaternion algebras while $FG$ is the direct sum of $M_0=2N_0-72$ fields
and $M-M_0=2(N-N_0)-64$ quaternion algebras.

\bigskip

Finally we move to the general situation where all $m_i\ge3$.
Recall that $G=G_0\times\langle t_4\rangle$ with $G_0$ a group
from class $\C{D}_5$. \lemref{lem:4cyclics} allows to us to determine the number
$N_1$ of \B{Q}-classes of $\C{Z}(G)\iso C_{2^{m_1}}\times C_{2^{m_2}}\times C_{2^{m_3}}
\times C_{2^{m_4}}$, while Lemmas \ref{lem:xclasses} and \ref{lem:4cyclics}
give us the numbers $N_2$, $N_3$ and $N_4$  of \B{Q}-classes involving $x$, $y$, and $xy$,
respectively.  Thus $\B{Q}G$ has $N=N_1+N_2+N_3+N_4$ simple components.
Now $G/G'=\langle \overline{t_1}\rangle\times\langle\overline{x}\rangle
\times\langle\overline{y}\rangle\times\langle\overline{t_4}\rangle
\iso C_{2^{m_1-1}}\times C_{2^{m_2+1}}\times C_{2^{m_3+1}}\times C_{2^{m_4}}$.
\lemref{lem:4cyclics} provides the formula for $N_0$, the number of cyclic subgroups
of this group,
which is also the number of fields in the commutative part of $\B{Q}G$.  We conclude that
$\B{Q}G$ is the direct sum of $N_0$ fields and $N-N_0$ quaternion
algebras.

Let $q\equiv3\pmod8$ be a positive integer and
let $F$ be the finite field of order $q$.  The cyclic subgroups of
the centre of $G$ that are generated by elements of order at most $4$
are contained in $C_4\times C_4\times C_4\times C_4$
and \lemref{lem:4cyclics} tells us there are $136$ of these.
By \lemref{lem1}, we conclude that
$136$ of the \B{Q}-classes of $\C{Z}(G)$ are also
$F$-classes while the remaining $N_1-136$ \B{Q}-classes split into two $F$-classes.
This gives $M_1=136+2(N_1-136)=2N_1-136$ $F$-classes.

Similar reasoning shows us that the numbers
of $F$-classes involving $x$, $y$ and $xy$ are, respectively, $M_2=2N_2$,
$M_3=2N_3$ and $M_4=2N_4$.
In all, $G$ contains
$M=M_1+M_2+M_3+M_4=2N-136$ $F$-classes, so  $FG$
is the direct sum of $2N-136$ simple components.
Of the $N_0$ \B{Q}-classes in $G/G'$, $136$ are also $F$-classes
while the remaining $N_0-136$ \B{Q}-classes
each split into two $F$-classes.  It follows that the commutative part of $FG$
is the direct sum of $M_0=136+2(N_0-136)=2N_0-136$ fields.

\begin{thm} Let $G_0$ be a group of type $\C{D}_5$ and let $G=G_0\times\langle t_4\rangle$
with a centre the direct product of four cyclic groups of orders
$2^{m_1}$, $2^{m_2}$, $2^{m_3}$, $2^{m_4}$,
respectively, with $m_1\ge3$, $m_2\ge3$, $m_3\ge3$, $m_4\ge3$.   Denote by $N_1$, $N_2$,
$N_3$,  $N_4$ the number of cyclic subgroups of \C{Z}(G) and the numbers
of \B{Q}-classes involving $x$, $y$ and $xy$, respectively.  (Note that these numbers
can be determined by \lemref{lem:4cyclics} and \lemref{lem:xclasses}.)
Let $N=N_1+N_2+N_3+N_4$ and let $N_0$ be the number of cyclic subgroups of $G/G'$.
Then $\B{Q}G$ is the direct sum of $N_0$ fields and $N-N_0$ quaternion algebras.
Let $K$ a finite field of order $q$.  Then there are least
$M=2N-136$ simple components in the Wedderburn decomposition of $KG$.
This minimal number is realized if $q\equiv3\pmod8$, in which case
$KG$ is the direct sum of $M_0=2N_0-136$  fields and $M-M_0=2(N-N_0)$
quaternion algebras, each necessarily a $2\times 2$ matrix ring.
\end{thm}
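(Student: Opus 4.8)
The plan is to follow the same template used for the classes $\C{D}_1,\dots,\C{D}_5$: invoke the Witt--Berman theorem so that the number of simple components of $KG$ is the number of $K$-classes, and combine this with the algebra splitting $KG\iso K[G/G']\oplus\Delta(G,G')$, in which the commutative summand is a sum of fields and $\Delta(G,G')$ a sum of quaternion algebras \cite[Proposition 3.6.7]{Milies:02}, \cite[Corollary VI.4.8]{EGG:96}. First I would count the \B{Q}-classes. Since $\C{Z}(G)\iso C_{2^{m_1}}\times C_{2^{m_2}}\times C_{2^{m_3}}\times C_{2^{m_4}}$ is abelian, its \B{Q}-classes are its cyclic subgroups, whose number $N_1$ is given by \lemref{lem:4cyclics}. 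For the noncentral classes I would form $H=\langle\C{Z}(G),x\rangle$, $\langle\C{Z}(G),y\rangle$ and $\langle\C{Z}(G),xy\rangle$, pass to the quotients mod $\langle s\rangle$ (each a product of four cyclic $2$-groups, computable from $x^2=t_2$, $y^2=t_3$, $(xy)^2=st_2t_3$), and read off $N_2,N_3,N_4$ from \lemref{lem:xclasses} as differences of cyclic-subgroup counts. Thus $\B{Q}G$ has $N=N_1+N_2+N_3+N_4$ simple components, and since $G/G'\iso C_{2^{m_1-1}}\times C_{2^{m_2+1}}\times C_{2^{m_3+1}}\times C_{2^{m_4}}$ is abelian, $\B{Q}[G/G']$ is the direct sum of $N_0$ fields with $N_0$ another count from \lemref{lem:4cyclics}. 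The remaining $N-N_0$ components then constitute $\Delta(G,G')$ and are quaternion algebras, establishing the $\B{Q}G$ statement.

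Next I would treat the finite-field count for $q\equiv3\pmod8$. The crucial point is that, because $m_i\ge3$, in each of $\C{Z}(G)$, the three quotients $H/\langle s\rangle$, and $G/G'$ all four cyclic factors survive with exponent at least $2$; hence the cyclic subgroups generated by elements of order at most $4$ lie in a single copy of $C_4\times C_4\times C_4\times C_4$ and number exactly $136$ by \lemref{lem:4cyclics}. By \lemref{lem1}, for $q\equiv3\pmod8$ these $136$ \B{Q}-classes remain $F$-classes while every other \B{Q}-class splits into exactly two. So the centre yields $M_1=136+2(N_1-136)=2N_1-136$ $F$-classes and $G/G'$ yields $M_0=2N_0-136$. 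The same constant $136$ enters both $H/\langle s\rangle$ and $\C{Z}(G)/\langle s\rangle$, so in \lemref{lem:xclasses} the constants cancel: writing $\beta,\alpha$ for the two cyclic-subgroup counts, $M_2=(2\beta-136)-(2\alpha-136)=2(\beta-\alpha)=2N_2$, and likewise $M_3=2N_3$, $M_4=2N_4$.

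Summing gives $M=M_1+M_2+M_3+M_4=2N-136$, with $M_0=2N_0-136$ of the components being fields, whence $M-M_0=2(N-N_0)$ are the noncommutative components. Over a finite field Wedderburn's little theorem forces every finite division ring to be a field, so none of these quaternion algebras can be a division algebra; each must be split, that is, isomorphic to $M_2(K)$. For a general finite field of odd order $q$, \lemref{lem1} shows that each \B{Q}-class splits into at least as many $K$-classes as it does $F$-classes, the minimum being attained at $q\equiv3\pmod8$; hence $KG$ has at least $M=2N-136$ simple components, as claimed.

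The main obstacle I expect is securing the uniformity of the constant $136$: one must check that it genuinely appears in all five relevant abelian groups, which rests precisely on $m_i\ge3$ keeping every cyclic factor of order at least $4$ after quotienting by $\langle s\rangle$ and after adjoining $x$, $y$ or $xy$. The case of $xy$ is the most delicate, since $(xy)^2=st_2t_3$ must still be shown to give a product of four cyclic $2$-groups in which all factors have order at least $4$. Once that is verified, the cancellation yielding $M_i=2N_i$ for $i=2,3,4$ and $M_1=2N_1-136$ is automatic, and the rest is bookkeeping.
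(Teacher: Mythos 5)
Your proposal follows exactly the paper's argument: count the $\B{Q}$-classes via \lemref{lem:4cyclics} and \lemref{lem:xclasses}, use $G/G'\iso C_{2^{m_1-1}}\times C_{2^{m_2+1}}\times C_{2^{m_3+1}}\times C_{2^{m_4}}$ for $N_0$, and then exploit the fact that exactly $136$ of the relevant cyclic subgroups are generated by elements of order at most $4$, so that \lemref{lem1} gives $M_1=2N_1-136$, $M_0=2N_0-136$, and the constant cancels in \lemref{lem:xclasses} to give $M_i=2N_i$ for $i=2,3,4$. The point you flag as delicate (that all four cyclic factors of $\langle\C{Z}(G),xy\rangle/\langle s\rangle$ retain order at least $4$ when every $m_i\ge3$) is indeed the hypothesis doing the work, and your treatment is correct and essentially identical to the paper's.
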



\providecommand{\bysame}{\leavevmode\hbox to3em{\hrulefill}\thinspace}
\providecommand{\MR}{\relax\ifhmode\unskip\space\fi MR }
\providecommand{\MRhref}[2]{%
  \href{http://www.ams.org/mathscinet-getitem?mr=#1}{#2}
}
\providecommand{\href}[2]{#2}

\end{document}